\documentclass{amsart}
\usepackage{blkarray}
\usepackage{amsmath,amssymb}
\usepackage{amsthm}
\usepackage{amsfonts}
\usepackage{dsfont}
\usepackage[all]{xy}
\usepackage{supertabular}
\usepackage{longtable}
\usepackage{graphicx}
\DeclareGraphicsExtensions{.pdf,.jpg,.png}
\usepackage{t1enc}
\setlength{\paperwidth}{25cm} \setlength{\paperheight}{29.7cm}
\setlength{\evensidemargin}{-1cm} \setlength{\oddsidemargin}{0cm}
\setlength{\topmargin}{-2.5cm} \setlength{\headsep}{0.7cm}
\setlength{\headheight}{1cm} \setlength{\textheight}{25cm}
\setlength{\textwidth}{17cm}
\linespread{1.44}
\usepackage[hmarginratio=1:1,top=32mm,columnsep=20pt]{geometry} 







\newtheorem{theorem}{Theorem}[section]
\newtheorem{corollary}[theorem]{Corollary}
\newtheorem{lemma}[theorem]{Lemma}
\newtheorem{proposition}[theorem]{Proposition}

\theoremstyle{definition}
\newtheorem{definition}[theorem]{Definition}
\newtheorem{propo}[theorem]{Property}
\theoremstyle{remark}
\newtheorem{remark}[theorem]{Remark}
\numberwithin{equation}{section}


\newcommand{\K}{\mathbb{K}}

\newcommand{\A}{\mathcal{A}}

\title[]{Structure and classification of Hom-associative algebras.}

\author{Ahmed Zahari}
\address{Universit\'{e} de Haute Alsace,  IRIMAS-d\'epartement de Math\'{e}matiques,
6, rue des Fr\`eres Lumi\`ere F-68093 Mulhouse, France}
\email{zaharymaths@gmail.com}

\author{Abdenacer Makhlouf}
\address{Universit\'{e} de Haute Alsace,  IRIMAS-d\'epartement de Math\'{e}matiques,
6, rue des Fr\`eres Lumi\`ere F-68093 Mulhouse, France}
\email{abdenacer.makhlouf@uha.fr}

\subjclass[]{}
\keywords{Hom-associative algebra, simple Hom-associative algebra, classification, cohomology, irreducible component.}

\begin{document}

\begin{abstract}
   The purpose of this paper is to study the structure and the algebraic varieties of Hom-associative algebras. We give  characterize   multiplicative simple Hom-associative algebras and show some examples deforming the $2\times 2$-matrix algebra to simple Hom-associative algebras.  We provide  a classification of $n$-dimensional Hom-associative algebras for  $n\leq3$. Then study their derivations and  compute  small Hom-Type Hochschild cohomology groups.  Furthermore,  we discuss their  irreducible components.
 \end{abstract}

\maketitle
\section*{Introduction}\label{sec:intro} 

The first motivation to study nonassociative Hom-algebras came from quasi-deformations of Lie algebras of vector fields, in particular $q$-deformations of Witt and Virasoro algebras. The deformed algebras arising when replacing usual derivation by a  $\sigma$-derivations are no longer Lie algebras. It was observed in the pioneering works, mainly by physicists,  that in these examples a twisted Jacobi identity holds. Motivated by these examples and their generalization on the one hand, and the desire to be able to treat within the same framework such well-known generalizations of Lie algebras as the color and Lie superalgebras on the other hand, quasi-Lie algebras and subclasses of quasi-hom-Lie algebras and hom-Lie algebras were introduced by Hartwig, Larsson and Silvestrov in \cite{HLS,LS}.
The Hom-associative algebras play the role of associative algebras in the Hom-Lie setting. They were introduced by the second author  and Silvestrov in \cite{MS}. Usual functors between the categories of Lie algebras and associative algebras were extended to Hom-setting, see \cite{Yau-Env2008} for the construction of the enveloping algebra of a Hom-Lie algebra.

A  Hom-associative algebra $(A, \mu, \alpha)$ is consisting of a vector space, a multiplication and a linear self map; It may be viewed as a deformation of an associative algebra, in which the associativity condition is twisted by a linear map $\alpha$  and such that when $\alpha=id$, the Hom-associative algebra degenerates to exactly an  associative algebra. 
We aim in this paper to study the structure of Hom-associative algebras.  We  give a characterization of   multiplicative simple Hom-associative algebras and show some examples deforming the $2\times 2$-matrix algebra to simple Hom-associative algebras.  Moreover we compute some invariants and discuss  irreducible components of the corresponding algebraic varieties.
 Let $A$ be an $n$-dimensional $\mathbb{K}$-linear space and  $\left\{e_1, e_2, \cdots, e_n\right\}$ be a basis of $A$. A Hom-algebra structure on $A$ with product $\mu$ is determined by $n^3$ structure constants 
$\mathcal{C}_{ij}^k$, were $\mu(e_i, e_j)=\sum^n_{k=1}\mathcal{C}_{ij}^ke_k$ and by $\alpha$ which is identified  by ${n^2}$ structure constants
$a_{ij}$, where $\alpha(e_i)=\sum_{j=1}^na_{ji}e_j$. Requiring the algebra structure to be Hom-associative and unital  gives rise to sub-variety $\mathcal{HA}ss_n$ (resp. $\mathcal{UHA}ss_n$) of $k^{n^3+n^2}$. Base changes in $A$ result in the natural transport of structure action of $GL_n(\mathbb{K})$ on $\mathcal{HA}ss_n$. Thus isomorphism classes of $n$-dimensional Hom-algebras are one-to-one correspondence with the orbits of the action of $GL_n(\mathbb{K})$ on $\mathcal{HA}ss_n$. The decomposition of $\mathcal{HA}ss_n$ into  irreducible components with respect to  Zariski topology is called the geometric classification of $n$-dimensional algebras.

The paper is organized as follows. In the first section we give the  basics about Hom-associative algebras and provide some new properties. Moreover, we discuss unital Hom-associative algebras. Section 2 deals with  simple multiplicative Hom-associative algebras. We present one of the main results of this paper, that is a characterization of  simple multiplicative Hom-associative algebras. Indeed, we  show that they are all obtained by twistings of  simple associative algebras. Moreover, we give all simple  Hom-associative algebras, which are related to $2\times 2$ matrix algebra. Section 3 is dedicated to  describe algebraic varieties of Hom-associative algebras and provide classification, up to isomorphism, of 2-dimensional and 3-dimensional Hom-associative algebras. In Section 4, we study  their derivations and twisted derivations, whereas in Section 5, we compute their Hom-type Hochschild cohomology. In the last section, we consider the geometric classification problem, using one-parameter formel deformations, and describe the irreducible components.   
\section{Structure of Hom-associative algebras}\label{sec:prel}
Let $\mathbb{K}$ be an algebraically closed field  of characteristic $0$, $A$ be a linear space over $\mathbb{K}$. We refer to a Hom-algebra by a triple $(A,\mu,\alpha)$, where $\mu:A\times A\rightarrow A$ is a bilinear map (multiplication) and   $\alpha$ is a homomorphism of $A$ (twist map). 

\subsection{Definitions}
\begin{definition}\cite{MS}.
A Hom-associative algebra is a triple $(A, \mu, \alpha)$ consisting of a linear space $A$, a bilinear map $\mu : A\times A\rightarrow A$ and 
a linear space homomorphism $\alpha : A\rightarrow A$ satisfying 
\begin{eqnarray}\label{Homassoc}
 \mu(\alpha(x), \mu(y,z))&=&\mu(\mu(x,y), \alpha(z)).\\
 \alpha(\mu(x,y))&=& \mu(\alpha(x),\alpha(y)).\label{multiplicativity}
\end{eqnarray}
Usually such a Hom-associative algebras are called multiplicative. Since we are dealing only with multiplicative  Hom-associative algebras, we shall call them  Hom-associative algebras for simplicity. 
We denote the set of all Hom-associative algebras  by $\mathcal{HA}ss$. 
In the language of Hopf algebras, the multiplication of a Hom-associative algebra over $A$ consists of a linear map $\mu : A\otimes A\rightarrow A$ and Condition \eqref{Homassoc} writes
$
\mu(\alpha(x)\otimes\mu(y\otimes z))=\mu(\mu(x\otimes y)\otimes\alpha(z)).
$ 
\end{definition}
\begin{definition}\label{d}
 A unital Hom-associative algebra  is given by a quadruple $(A, \mu, \alpha, u)$, where $u\in A$,   such that 
\begin{enumerate}
	\item [$\bullet$] $(A, \mu, \alpha)$ is a Hom-associative algebra,
	\item [$\bullet$] $\mu(x, u)=\mu(u, x)=\alpha(x) \quad \forall x\in A$,
	\item [$\bullet$] $\alpha(u)=u$.
\end{enumerate}
 \end{definition} 
\begin{definition}
Let $(A_1, \mu_1, \alpha_1)$ and $(A_2, \mu_2, \alpha_2)$ be two Hom-associative algebras (resp. unital  Hom-associative algebras with $u_1,u_2$ the units). A linear map $\varphi : A_1\rightarrow A_2$ is called
a  Hom-associative algebras morphism if
\begin{equation}\label{d2} 
\varphi(\mu_1(x,y))=\mu_2(\varphi(x), \varphi(y))\,\text{ and }\,\alpha_2\circ \varphi(x)=\varphi\circ\alpha_1(x),\,\forall x, y\in A.
\end{equation} 
and $\varphi(u_1)=u_2$ for unital algebras.
\end{definition}
In particular, Hom-associative algebras $(A_1,\mu_1,\alpha_1)$ and $(A_2, \mu_2, \alpha_2)$ are isomorphic if $\varphi$ is also bijective.
\subsection{Structure of Hom-associative algebras}
We state in this section some properties on the structure of Hom-associative algebras which are not necessarily multiplicative.
\begin{proposition}[\cite{Yau}]
Let $(A, \mu,\alpha)$ be a Hom-associative algebra and $\beta : A\rightarrow A$ be a Hom-associative algebra morphism. Then $(A,\beta\mu,\beta\alpha)$ is a
Hom-associative algebra. In particular, if $(A,\mu)$ is an associative algebra and $\beta$ is an algebra morphism, then 
$(A, \beta\mu,\beta)$ is a Hom-associative algebra.
\end{proposition}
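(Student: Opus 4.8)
The plan is to verify directly that the twisted data $(A,\beta\mu,\beta\alpha)$ satisfies the two defining axioms \eqref{Homassoc} and \eqref{multiplicativity}, using only that $(A,\mu,\alpha)$ already satisfies them together with the two properties that make $\beta$ a morphism, namely $\beta(\mu(a,b))=\mu(\beta(a),\beta(b))$ and $\beta\circ\alpha=\alpha\circ\beta$, both recorded in \eqref{d2}. Writing $\mu':=\beta\mu$ and $\alpha':=\beta\alpha$, the whole argument is a bookkeeping exercise: every occurrence of $\mu'$ contributes an outer factor $\beta$ which, by the morphism property, can be pushed through the original multiplication, while every $\alpha'$ can be rewritten using $\alpha\beta=\beta\alpha$.

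First I would check the twisted Hom-associativity. Starting from $\mu'(\alpha'(x),\mu'(y,z))$, I would pull the outermost $\beta$ out, use the morphism property to replace the inner $\mu'$ by $\mu(\beta(y),\beta(z))$, and rewrite $\beta\alpha(x)$ as $\alpha(\beta(x))$. This reduces the left-hand side to $\beta\big(\mu(\alpha(\beta x),\mu(\beta y,\beta z))\big)$, which by the original Hom-associativity \eqref{Homassoc} applied to the triple $(\beta x,\beta y,\beta z)$ equals $\beta\big(\mu(\mu(\beta x,\beta y),\alpha(\beta z))\big)$. Performing the symmetric manipulation on the right-hand side $\mu'(\mu'(x,y),\alpha'(z))$ yields exactly the same expression, so the two sides agree.

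Next I would check the twisted multiplicativity $\alpha'(\mu'(x,y))=\mu'(\alpha'(x),\alpha'(y))$. On the left, expanding $\alpha'=\beta\alpha$ and $\mu'=\beta\mu$ and commuting $\alpha$ past $\beta$ leads, via the original multiplicativity \eqref{multiplicativity} of $\alpha$, to $\beta^2\big(\mu(\alpha x,\alpha y)\big)$. On the right, applying the morphism property to $\mu(\beta\alpha x,\beta\alpha y)$ produces $\beta\mu(\alpha x,\alpha y)$ and hence the same value $\beta^2\big(\mu(\alpha x,\alpha y)\big)$. This is the step where one must be careful to track that a \emph{second} copy of $\beta$ is generated on each side, and where both halves of the morphism hypothesis are needed at once; it is the only place the calculation could go astray, so I regard it as the main (if modest) obstacle.

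Finally, the ``in particular'' claim follows by specialization. An associative algebra $(A,\mu)$ is precisely the Hom-associative algebra $(A,\mu,\id)$, for which \eqref{Homassoc} reduces to ordinary associativity and \eqref{multiplicativity} is trivial. Any algebra morphism $\beta$ then automatically satisfies \eqref{d2} for this structure, since $\id\circ\beta=\beta\circ\id$ holds with no hypothesis. Applying the first part with $\alpha=\id$ gives that $(A,\beta\mu,\beta\,\id)=(A,\beta\mu,\beta)$ is a Hom-associative algebra, as claimed.
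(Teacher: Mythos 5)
Your proof is correct: both axioms are verified exactly as required, the second copy of $\beta$ is tracked properly in the multiplicativity check, and the specialization $\alpha=\id$ is handled cleanly. The paper itself states this proposition without proof (citing \cite{Yau}), and your direct verification is precisely the standard ``twisting'' argument given in that reference, so there is nothing to reconcile.
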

\begin{definition}\label{Defs1}
Let $(A, \mu, \alpha)$ be a Hom-associative algebra. If there is an associative algebra $(A, \mu')$ such that $\mu(x,y)=\alpha\mu'(x,y),\, \forall x,y\in A$, we say that $(A, \mu, \alpha)$ is of associative type and $(A,\mu')$ is its compatible associative algebra or the untwist of $(A, \mu, \alpha)$.  
\end{definition}
\begin{corollary}
Let $(A, \mu,\alpha)$ be a multiplicative Hom-associative algebra   where $\alpha$ is invertible  then $(A,\mu'= \alpha^{-1}\circ \mu)$ 
is an associative algebra and $\alpha$ is an automorphism with respect to $\mu'$. Hence,  $(A, \mu,\alpha)$ is  of associative type and $(A,\mu'= \alpha^{-1}\circ \mu)$ is its compatible associative algebra. 
\end{corollary}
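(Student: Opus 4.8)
The plan is to work directly with the candidate product $\mu' = \alpha^{-1}\circ\mu$, equivalently $\mu = \alpha\circ\mu'$, which makes sense precisely because $\alpha$ is invertible. I would verify the two required properties in turn: that $\alpha$ is an algebra morphism, hence an automorphism (being bijective), for $\mu'$, and that $\mu'$ is associative. Both reduce to substituting $\mu = \alpha\circ\mu'$ into the defining identities \eqref{Homassoc} and \eqref{multiplicativity} and then cancelling the outer copy of $\alpha$ using its invertibility.

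First I would handle multiplicativity. Starting from \eqref{multiplicativity}, $\alpha(\mu(x,y)) = \mu(\alpha(x),\alpha(y))$, I substitute $\mu = \alpha\circ\mu'$ on both sides to get $\alpha\big(\alpha(\mu'(x,y))\big) = \alpha\big(\mu'(\alpha(x),\alpha(y))\big)$. Applying $\alpha^{-1}$ yields $\alpha(\mu'(x,y)) = \mu'(\alpha(x),\alpha(y))$, which is exactly the statement that $\alpha$ respects $\mu'$; since $\alpha$ is bijective it is an automorphism of $(A,\mu')$.

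Next I would prove associativity. Substituting $\mu = \alpha\circ\mu'$ into \eqref{Homassoc} and applying $\alpha^{-1}$ to both sides transforms the Hom-associativity condition into $\mu'\big(\alpha(x),\alpha(\mu'(y,z))\big) = \mu'\big(\alpha(\mu'(x,y)),\alpha(z)\big)$. Now I invoke the multiplicativity of $\alpha$ for $\mu'$ just established, rewriting $\alpha(\mu'(y,z)) = \mu'(\alpha(y),\alpha(z))$ and $\alpha(\mu'(x,y)) = \mu'(\alpha(x),\alpha(y))$, so the identity becomes $\mu'\big(\alpha(x),\mu'(\alpha(y),\alpha(z))\big) = \mu'\big(\mu'(\alpha(x),\alpha(y)),\alpha(z)\big)$. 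Finally, since $\alpha$ is surjective, the triples $(\alpha(x),\alpha(y),\alpha(z))$ exhaust $A\times A\times A$, so the displayed equation holds for all elements and is precisely associativity of $\mu'$.

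With both properties in hand, $(A,\mu')$ is an associative algebra on which $\alpha$ acts as an automorphism, and since $\mu = \alpha\circ\mu' = \alpha\mu'$ the algebra $(A,\mu,\alpha)$ is of associative type with compatible associative algebra $(A,\mu')$ in the sense of Definition \ref{Defs1}, as claimed. I do not expect any serious obstacle here: the argument is essentially bookkeeping with $\alpha^{\pm1}$, and the only genuine point worth flagging is the use of bijectivity of $\alpha$ twice — once to cancel the outer $\alpha$, and once, through surjectivity, to promote the associativity identity from arguments of the form $\alpha(\,\cdot\,)$ to arbitrary arguments.
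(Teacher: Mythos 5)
Your proposal is correct and follows essentially the same route as the paper: both arguments rest on rewriting $\mu=\alpha\circ\mu'$ in the identities \eqref{Homassoc} and \eqref{multiplicativity}, cancelling $\alpha$ by invertibility, and using multiplicativity to move $\alpha^{\pm1}$ through the product. The only cosmetic difference is that you verify associativity on triples of the form $\paraa{\alpha(x),\alpha(y),\alpha(z)}$ and then invoke surjectivity of $\alpha$, whereas the paper performs the equivalent change of variables inline by inserting $\alpha^{-1}\circ\alpha$ into the arguments and pulling $\alpha^{-1}$ through $\mu$, so both proofs use the bijectivity of $\alpha$ at exactly the same two points.
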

\begin{proof}
We prove that $(A, \alpha^{-1}\circ \mu)$ is an associative algebra. Indeed,
$$\begin{array}{ll}
\mu'(\mu'(x,y), z)
&=\alpha^{-1}\circ\mu(\alpha^{-1}\mu(x,y),z)
=\alpha^{-1}\circ\mu(\alpha^{-1}\mu(x,y),\alpha^{-1}\circ\alpha(z))\\
&=\alpha^{-2}\circ\mu(\mu(x,y),\alpha(z))
=\alpha^{-2}\circ\mu(\alpha(x),\mu(y,z))
=\alpha^{-1}\circ\mu(x,\alpha^{-1}\circ\mu(x,y))\\
&=\mu'(x,\mu'(y,z)).
\end{array}$$
Moreover, $\alpha$ is an automorphism with respect to $\mu'$. Indeed,
$$\begin{array}{ll}
\mu'(\alpha(x),\alpha(y))
=\alpha^{-1}\circ\mu(\alpha(x),\alpha(y))
=\alpha\circ\alpha^{-1}\circ\mu(x,y)=\alpha\circ\mu'(x,y).
\end{array}$$ 
\end{proof}
\begin{remark}Notice that if $\alpha$ is not invertible, assuming $ \mu=\alpha\tilde{\mu}$ leads to
\begin{eqnarray*}
\mu(\alpha(x),\mu(y,z))
&=&\mu(\mu(x,y), \alpha(z))\\
\alpha\tilde{\mu}(\alpha(x),\alpha\tilde{\mu}(y,z))
&=&\alpha\tilde{\mu}(\alpha\tilde{\mu}(x,y),\alpha(z))\\
\alpha^2(\tilde{\mu}(x,\tilde{\mu}(y,z)))
&=&\alpha^2(\tilde{\mu}(\tilde{\mu}(x,y),z)),\end{eqnarray*}
which means that $\tilde{\mu}$ is associative up to $\alpha^2$.
\end{remark}
\begin{proposition}\label{p3}
Let $(A_1, \mu_1, \alpha_1)$ and $(A_2, \mu_2, \alpha_2)$ be two Hom-associative algebras and  $\phi : A_1\rightarrow A_2$ be an invertible  Hom-associative algebra morphism. If $(A_1, \mu_1, \alpha_1)$ is of associative type and $(A_1, \mu'_1)$  is its compatible associative algebra then $(A_2, \mu_2, \alpha_2)$ is of associative type with compatible associative algebra 
$(A_2, \mu'_2=\phi\circ\mu_1\circ(\phi^{-1}\otimes  \phi^{-1}))$ such that $\phi : (A_1, \mu'_1)\rightarrow (A_2, \mu'_2)$ is an algebra morphism. 
\end{proposition}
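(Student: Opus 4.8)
The plan is to transport the compatible (untwisted) associative product $\mu'_1$ of $A_1$ across the isomorphism $\phi$, and then verify that the resulting product on $A_2$ is both associative and reproduces $\mu_2$ after twisting by $\alpha_2$. Concretely, I would set $\mu'_2 := \phi\circ\mu'_1\circ(\phi^{-1}\otimes\phi^{-1})$ and prove three things: (i) $(A_2,\mu'_2)$ is associative; (ii) $\mu_2=\alpha_2\circ\mu'_2$, so that $(A_2,\mu'_2)$ is exactly the untwist of $(A_2,\mu_2,\alpha_2)$ in the sense of Definition~\ref{Defs1}; and (iii) $\phi$ intertwines $\mu'_1$ and $\mu'_2$, hence is an algebra isomorphism between the two compatible associative algebras.

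Steps (i) and (iii) are purely formal and use only that $\phi$ is a linear bijection. For (iii), the defining formula gives $\mu'_2(\phi(x),\phi(y))=\phi(\mu'_1(x,y))$ directly, so $\phi\colon(A_1,\mu'_1)\to(A_2,\mu'_2)$ is an algebra morphism, and it is bijective by hypothesis. For (i), associativity of $\mu'_2$ is transport of structure: expanding $\mu'_2(\mu'_2(a,b),c)$ and $\mu'_2(a,\mu'_2(b,c))$ and cancelling the inner $\phi^{-1}\phi$ pairs reduces both sides to $\phi$ applied to the two associators of $\mu'_1$ at $\phi^{-1}(a),\phi^{-1}(b),\phi^{-1}(c)$, which agree because $\mu'_1$ is associative.

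The substantive step is (ii), where both clauses of the morphism condition~\eqref{d2} must be combined. Evaluating $\alpha_2\circ\mu'_2$ on a pair $(\phi(x),\phi(y))$, the definition of $\mu'_2$ turns it into $\alpha_2\big(\phi(\mu'_1(x,y))\big)$; the intertwining relation $\alpha_2\circ\phi=\phi\circ\alpha_1$ rewrites this as $\phi\big(\alpha_1(\mu'_1(x,y))\big)$; the associative-type hypothesis $\mu_1=\alpha_1\circ\mu'_1$ collapses it to $\phi(\mu_1(x,y))$; and finally the multiplicativity clause $\phi\circ\mu_1=\mu_2\circ(\phi\otimes\phi)$ yields $\mu_2(\phi(x),\phi(y))$. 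Since $\phi$ is surjective, every pair of elements of $A_2$ has the form $(\phi(x),\phi(y))$, so this identity holds on all of $A_2\times A_2$ and gives $\mu_2=\alpha_2\circ\mu'_2$.

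I expect the only real difficulty to be the bookkeeping in (ii): one must apply the intertwining relation before invoking $\mu_1=\alpha_1\mu'_1$, so that $\alpha_2$ is converted into $\alpha_1$ on the correct side, and one must keep the arguments in the form $(\phi(x),\phi(y))$ so that multiplicativity can be used at the last step. Notably, no invertibility of $\alpha_1$ or $\alpha_2$ is required --- only the bijectivity of $\phi$ --- which is exactly why transporting $\mu'_1$ is the right move rather than attempting to untwist $\mu_2$ on $A_2$ directly.
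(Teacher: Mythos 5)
Your proposal is correct and takes essentially the same route as the paper's proof: transport the untwisted product along $\phi$ by setting $\mu'_2=\phi\circ\mu'_1\circ(\phi^{-1}\otimes\phi^{-1})$, derive $\mu_2=\alpha_2\circ\mu'_2$ from the two morphism clauses $\alpha_2\circ\phi=\phi\circ\alpha_1$ and $\phi\circ\mu_1=\mu_2\circ(\phi\otimes\phi)$ together with $\mu_1=\alpha_1\circ\mu'_1$, and verify associativity of $\mu'_2$ by conjugation. Your write-up in fact silently repairs a notational slip in the paper, whose statement and proof write $\mu_1$ and $\mu_2$ in places where $\mu'_1$ and $\mu'_2$ are clearly intended (as you note implicitly, $\phi\circ\mu_1\circ(\phi^{-1}\otimes\phi^{-1})$ equals $\mu_2$ itself, so the transported product must be built from $\mu'_1$).
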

\begin{proof}
Because $\phi$ is a  homomorphism from $(A_1, \mu_1, \alpha_1)$ to $(A_2, \mu_2, \alpha_2)$, then 
$\alpha_2\phi=\phi\alpha_1, \, \forall x, y\in A$, $\phi$ defines $\mu_2$ by $\mu_2(\phi(x), \phi(y))=\phi\mu_1(x,y).$ It is easy to 
check that $(A_2, \mu_2)$ is an associative algebra. Furthermore 
$$\begin{array}{ll}
\mu_2(\phi(x),\phi(y))=\phi\circ\mu_1(x, y)
&=\phi\circ\alpha_1\circ\mu'_1(x,y))\\
&=\alpha_2\circ\phi\mu'_1(x,y)=\alpha_2\mu'_2(\phi(x),\phi(y)).
\end{array}$$
We show that $\mu_2$ is an associative algebra  such that $\mu_2(u,v)=\phi\circ\mu_1(\phi^{-1}(u), \phi^{-1}(v))$ with 
$x=\phi^{-1}(u), y=\phi^{-1}(v)$ and $z=\phi^{-1}(w)$ for all $x,y,z \in A$.
$$\begin{array}{ll}
&\mu_2(\mu_2(u,v),w)
=\phi\circ\mu_1(\phi^{-1}\otimes\phi^{-1})(\phi\circ\mu_1(\phi^{-1}\otimes\phi^{-1})(u,v),w)\\
&=\phi\circ\mu_1(\phi^{-1}\otimes\phi^{-1})(\phi\circ\mu_1(\phi^{-1}(u),\phi^{-1}(v)),w)
=\phi\circ\mu_1(\mu_1(\phi^{-1}(u),\phi^{-1}(v)),\phi^{-1}(w))\\
&=\phi\circ\mu_1(\phi^{-1}(u),\mu_1(\phi^{-1}(v),\phi^{-1}(w)))
=\phi\circ\mu_1(\phi^{-1}\otimes\phi^{-1})(\phi\otimes\phi)(\phi^{-1}(u),\mu_1(\phi^{-1}(v),\phi^{-1}(w))\\
&=\phi\circ\mu_1(\phi^{-1}\otimes\phi^{-1})(u,\phi\mu_1(\phi^{-1}(v),\phi^{-1}(w)))
=\mu_2(u,\mu_2(v,w)).
\end{array}$$ 
Hence, $(A_2, \mu_2)$ is  an associative algebra.
\end{proof}

\begin{proposition}\label{p1}
Let $(A, \mu, \alpha)$ be a $n$-dimensional Hom-associative algebra and $\phi : A\rightarrow A$ be an invertible linear map. Then there is an
isomorphism with a n-dimensional Hom-associative algebra  $(A, \mu', \phi\alpha\phi^{-1})$ where\\ 
$\mu'=\phi\circ\mu\circ(\phi^{-1}\otimes\phi^{-1}$). Furthermore, if $\left\{C^k_{ij}\right\}$ are the structure constants of $\mu$ with
respect to the basis $\left\{e_1,\dots,e_n\right\}$,  then $\mu'$ has the same structure constants with respect to the basis 
$\left\{\phi(e_1),\dots,\phi(e_n)\right\}$ when $\phi(e_p)=\sum_{k=1}^na_{kp}e_k$.
\end{proposition}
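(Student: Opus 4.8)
The plan is to treat this as a transport-of-structure statement: the map $\phi$ itself will be the isomorphism, so the whole argument reduces to checking that $\phi$ intertwines the two structures and that the target is genuinely Hom-associative. Writing $\alpha'=\phi\alpha\phi^{-1}$, I would first verify the two morphism identities in \eqref{d2} directly. For the multiplication, $\mu'(\phi(x),\phi(y))=\phi\mu(\phi^{-1}\phi(x),\phi^{-1}\phi(y))=\phi(\mu(x,y))$, so the inner $\phi^{-1}\phi$ pairs cancel and the first identity holds on the nose; for the twist map, $\alpha'\circ\phi=\phi\alpha\phi^{-1}\phi=\phi\circ\alpha$, which is the second. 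Since $\phi$ is assumed invertible, this already exhibits it as a bijective morphism, hence an isomorphism, once the target is known to be a Hom-associative algebra.

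Next I would confirm that $(A,\mu',\alpha')$ satisfies \eqref{Homassoc} and \eqref{multiplicativity}. The cleanest route is direct substitution: expanding $\mu'(\alpha'(u),\mu'(v,w))$ and $\mu'(\mu'(u,v),\alpha'(w))$ from the definitions, every internal $\phi^{-1}\phi$ collapses, leaving $\phi\mu(\alpha(\phi^{-1}u),\mu(\phi^{-1}v,\phi^{-1}w))$ and $\phi\mu(\mu(\phi^{-1}u,\phi^{-1}v),\alpha(\phi^{-1}w))$ respectively, which agree by Hom-associativity \eqref{Homassoc} of $\mu$. Condition \eqref{multiplicativity} is handled identically: both $\alpha'(\mu'(u,v))$ and $\mu'(\alpha'(u),\alpha'(v))$ reduce to $\phi\mu(\alpha\phi^{-1}u,\alpha\phi^{-1}v)$ using multiplicativity of $\alpha$. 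This is the computational core of the argument, but it is mechanical cancellation rather than a real difficulty.

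For the claim about structure constants, I would simply evaluate $\mu'$ on the transformed basis. Setting $f_p=\phi(e_p)$, linearity of $\phi$ gives $\mu'(f_i,f_j)=\phi\mu(\phi^{-1}f_i,\phi^{-1}f_j)=\phi\mu(e_i,e_j)=\phi\big(\sum_k C^k_{ij}e_k\big)=\sum_k C^k_{ij}f_k$, so the structure constants of $\mu'$ in the basis $\{\phi(e_1),\dots,\phi(e_n)\}$ are exactly the $C^k_{ij}$, as claimed. The only point demanding any care throughout is consistency in reading $\phi^{-1}\otimes\phi^{-1}$ as applying $\phi^{-1}$ in each slot; once that convention is fixed there is no genuine obstacle, since the proposition is essentially the observation that Hom-associativity is preserved under conjugation by an invertible linear map.
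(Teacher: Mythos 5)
Your proposal is correct and follows essentially the same route as the paper's own proof: a direct verification by cancellation of $\phi^{-1}\phi$ pairs that $(A,\mu',\phi\alpha\phi^{-1})$ satisfies \eqref{Homassoc} and \eqref{multiplicativity}, that $\phi$ satisfies the two intertwining identities of \eqref{d2}, and that evaluating $\mu'$ on the basis $\{\phi(e_1),\dots,\phi(e_n)\}$ reproduces the structure constants $C^k_{ij}$. The only difference is cosmetic ordering — you check the morphism identities before Hom-associativity of the target, while the paper does the reverse — which does not change the substance of the argument.
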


\begin{proof}
We prove for any invertible linear map $\phi : A\rightarrow A,\,(A, \mu', \phi\alpha\phi^{-1})$ is a Hom-associative algebra.
$$\begin{array}{ll}
&\mu'(\mu'(x, y),\phi\alpha\phi^{-1}(z))
=\phi\mu(\phi^{-1}\otimes\phi^{-1})(\phi\mu(\phi^{-1}\otimes\phi^{-1})(x,y),\phi\alpha\phi^{-1}(z))\\
&=\phi\mu(\mu(\phi^{-1}(x),\phi^{-1}(y)),\alpha\phi^{-1}(z))
=\phi\mu(\alpha\phi^{-1}(x),\mu(\phi^{-1}(y),\phi^{-1}(z)))\\
&=\phi\mu(\phi^{-1}\otimes\phi^{-1})(\phi\otimes\phi)(\alpha\phi^{-1}(x),\mu(\phi^{-1}\otimes\phi^{-1})(y,z)))\\
&=\phi\mu(\phi^{-1}\otimes\phi^{-1})(\phi\alpha\phi^{1}(x),\phi\mu(\phi^{-1}\otimes\phi^{-1})(y,z))
=\mu'(\phi\alpha\phi^{-1}(x),\mu'(y,z)).
\end{array}$$
So $(A, \mu', \phi\alpha\phi^{-1})$ is a Hom-associative algebra.\\
It is also multiplicative. Indeed,  
$$\begin{array}{ll}
&\phi\alpha\phi^{-1}\mu'(x,y)
=\phi\alpha\phi^{-1}\phi\mu(\phi^{-1}\otimes\phi^{-1})(x,y)
=\phi\alpha\mu(\phi^{-1}\otimes\phi^{-1})(x,y)\\
&=\phi\mu(\alpha\phi^{-1}(x),\alpha\phi^{-1}(y))
=\phi\mu(\phi^{-1}\otimes\phi^{-1})(\phi\otimes\phi)(\alpha\phi^{-1}(x),\alpha\phi^{-1}(y))
=\mu'(\phi\alpha\phi^{-1}(x),\phi\alpha\phi^{-1}(y)).
\end{array}$$
Therefore $\phi : (A,\mu,\alpha)\rightarrow(A,\mu',\phi\alpha\phi^{-1})$ is a Hom-associative  algebras morphism, since\\ 
$\phi\circ\mu=\phi\circ\mu\circ(\phi^{-1}\otimes\phi^{-1})\circ(\phi\otimes\phi)=\mu'\circ(\phi\otimes\phi)$ and 
$(\phi\alpha\phi^{-1})\circ\phi=\phi\circ\alpha.$\\
It is easy to see that 
$\left\{\phi(e_i), \cdots, \phi(e_n)\right\}$ is a basis of $A$. For $i,j=1,\cdots,n$, we have 

$\begin{array}{ll}
\mu_2(\phi(e_i),\phi(e_j))
&=\phi\mu_1(\phi^{-1}(e_i), \phi^{-1}(e_j))=\phi\mu(e_i, e_j)=\sum_{k=1}^n\mathcal{C}^k_{ij}\phi(e_k).
\end{array}$ 
\end{proof}

\begin{remark}
A Hom-associative algebra $(A,\mu,\alpha)$ is isomorphic to an associative algebra if and only if $\alpha=id$. Indeed,
$\phi\circ\alpha\phi^{-1}=id$ is equivalent to  $\alpha=id$.
\end{remark}

\begin{remark}
 Proposition \ref{p1} is useful to make a classification of Hom-associative algebras. Indeed, we have to consider the class of morphisms which are conjugate. Representations of these classes are given by Jordan forms of the matrices corresponding to the morphisms. 
 Any $n\times n$ matrix over $\mathbb{K}$ is equivalent, up to basis change,  to  a Jordan canonical form, then we choose $\phi$ such that the matrix of $\phi\alpha\phi^{-1}=\gamma$, where $\gamma$ is a Jordan canonical form. \\
Hence, to obtain the classification, we consider only Jordan forms for the structure map of Hom-associative algebras.
\end{remark}
\begin{proposition}
Let $(A, \mu, \alpha)$ be a Hom-associative algebra. Let $(A, \mu', \phi\alpha\phi^{-1})$ be its isomorphic Hom-associative algebra described
in Proposition \ref{p1}. If $\psi$ is an automorphism of $(A, \mu, \alpha)$, then $\phi\psi\phi^{-1}$ is an automorphism of 
$(A, \mu, \phi\alpha\phi^{-1})$. 
\end{proposition}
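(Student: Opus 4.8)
The plan is to verify directly that $\bar\psi := \phi\psi\phi^{-1}$ satisfies the two defining conditions of an automorphism of the isomorphic Hom-associative algebra $(A,\mu',\phi\alpha\phi^{-1})$ of Proposition \ref{p1}, namely compatibility with the twisted multiplication $\mu'=\phi\circ\mu\circ(\phi^{-1}\otimes\phi^{-1})$ and commutation with the twist map $\phi\alpha\phi^{-1}$. Since $\psi$ is by hypothesis an automorphism of $(A,\mu,\alpha)$, I may use throughout the two identities $\psi\circ\mu=\mu\circ(\psi\otimes\psi)$ and $\psi\circ\alpha=\alpha\circ\psi$.

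First I would check multiplicativity. Expanding,
$$\bar\psi\circ\mu'=\phi\psi\phi^{-1}\circ\phi\mu(\phi^{-1}\otimes\phi^{-1})=\phi\psi\mu(\phi^{-1}\otimes\phi^{-1}),$$
and substituting $\psi\mu=\mu(\psi\otimes\psi)$ yields $\phi\mu(\psi\phi^{-1}\otimes\psi\phi^{-1})$. On the other side, $\mu'\circ(\bar\psi\otimes\bar\psi)$ expands to $\phi\mu(\phi^{-1}\otimes\phi^{-1})(\phi\psi\phi^{-1}\otimes\phi\psi\phi^{-1})$, and the inner $\phi^{-1}\phi$ factors cancel to give the same expression $\phi\mu(\psi\phi^{-1}\otimes\psi\phi^{-1})$. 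Hence $\bar\psi\circ\mu'=\mu'\circ(\bar\psi\otimes\bar\psi)$.

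Next I would check commutation with the twist. Here $\bar\psi\circ(\phi\alpha\phi^{-1})=\phi\psi\alpha\phi^{-1}$ after cancelling $\phi^{-1}\phi$, and using $\psi\alpha=\alpha\psi$ this equals $\phi\alpha\psi\phi^{-1}$; likewise $(\phi\alpha\phi^{-1})\circ\bar\psi=\phi\alpha\psi\phi^{-1}$, so the two sides agree. Finally, $\bar\psi$ is a composite of the invertible maps $\phi,\psi,\phi^{-1}$ and is therefore bijective, which completes the verification that $\bar\psi$ is an automorphism.

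I do not expect any genuine obstacle: this is a transport-of-structure statement and the whole proof reduces to the routine cancellation of $\phi^{-1}\phi$ in tensor-product composites. The only point requiring slight care is the bookkeeping of how $\phi\otimes\phi$ interacts with $\psi\otimes\psi$, that is, the functoriality identity $(\phi\otimes\phi)\circ(\psi\otimes\psi)=(\phi\psi)\otimes(\phi\psi)$, which is precisely what makes the two sides of the multiplicativity condition coincide.
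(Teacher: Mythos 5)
Your proposal is correct and follows essentially the same route as the paper's proof: both verify the commutation $\phi\psi\phi^{-1}\circ(\phi\alpha\phi^{-1})=(\phi\alpha\phi^{-1})\circ\phi\psi\phi^{-1}$ by cancelling $\phi^{-1}\phi$ and using $\psi\alpha=\alpha\psi$, and both check compatibility with $\mu'$ via the identity $\psi\mu=\mu(\psi\otimes\psi)$ --- you phrase it as a composition of maps with tensor products, the paper evaluates on $\phi(x),\phi(y)$, but the computation is the same. Your explicit remark on bijectivity (implicitly assumed in the paper, where the statement's $(A,\mu,\phi\alpha\phi^{-1})$ should read $(A,\mu',\phi\alpha\phi^{-1})$, as you correctly interpreted) is a harmless addition.
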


\begin{proof}
Note that $\gamma=\phi\alpha\phi^{-1}$. We have\\ $\phi\psi\phi^{-1}\gamma=\phi\psi\phi^{-1}\phi\alpha\phi^{-1}=\phi\psi\alpha\phi^{-1}=\phi\alpha\psi\phi^{-1}=\phi\alpha\phi^{-1}\phi\psi\phi^{-1}=\gamma\phi\psi\phi^{-1}$.

For any $x,y\in A,$
$$\begin{array}{ll}
 \phi\psi\phi^{-1}\mu'(\phi(x),\phi(y))&=\phi\psi\phi^{-1}\phi\mu(x,y)
 =\phi\psi\mu(x,y)=\phi\mu(\psi(x),\psi(y))\\
 &=\mu'(\phi\psi(x), \phi\psi(y))
 =\mu'(\phi\psi\phi^{-1}(\phi(x)), \phi\psi\phi^{-1}(\phi(y))).
 \end{array}$$
 By Definition, $\phi\psi\phi^{-1}$ is an automorphism of $(A, \mu', \phi\alpha\phi^{-1})$.
 \end{proof}
The following characterization was given for Hom-Lie algebras in \cite{Sheng}.
\begin{proposition}
Given two Hom-associative algebras $(A, \mu_A, \alpha)$ and $(B, \mu_B, \beta)$, there is a Hom-associative algebra 
$(A\oplus B, \mu_{A\oplus B}, \alpha+\beta)$, where the bilinear map 
$\mu_{A\oplus B}(., .) : (A\oplus B)\times (A\oplus B)\rightarrow (A\oplus B)$ is given by 
$$\mu_{A\oplus B}(a_1+b_1,a_2+b_2)=(\mu_A(a_1,a_2),\mu_B(b_1,b_2)),\, \forall\, a_1,a_2\in A,\,\forall\, b_1,b_2\in B,$$ and the linear map 
$(\alpha+\beta) : A\oplus B \rightarrow A\oplus B$ is given by $$(\alpha+\beta)(a,b)=(\alpha(a),\beta(b))\,\forall a\in A , b\in B.$$  
\end{proposition}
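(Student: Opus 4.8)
The plan is to verify directly the two defining axioms of a (multiplicative) Hom-associative algebra for the triple $(A\oplus B,\mu_{A\oplus B},\alpha+\beta)$, namely the Hom-associativity condition \eqref{Homassoc} and the multiplicativity condition \eqref{multiplicativity}. The guiding observation is that both the product $\mu_{A\oplus B}$ and the twist map $\alpha+\beta$ act componentwise, so that every expression one writes down splits cleanly into its $A$-part and its $B$-part; the required identities then follow coordinate by coordinate from the fact that $(A,\mu_A,\alpha)$ and $(B,\mu_B,\beta)$ are themselves Hom-associative. At the outset I would also record the routine point that $\alpha+\beta$ is a well-defined linear endomorphism of $A\oplus B$ and that $\mu_{A\oplus B}$ is bilinear, both being immediate from the componentwise definitions.

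Concretely, I would take three arbitrary elements $x=a_1+b_1$, $y=a_2+b_2$, $z=a_3+b_3$ of $A\oplus B$ with $a_i\in A$ and $b_i\in B$. For the Hom-associativity axiom, I would compute the left-hand side $\mu_{A\oplus B}((\alpha+\beta)(x),\mu_{A\oplus B}(y,z))$ by first expanding $(\alpha+\beta)(x)=(\alpha(a_1),\beta(b_1))$ and $\mu_{A\oplus B}(y,z)=(\mu_A(a_2,a_3),\mu_B(b_2,b_3))$, and then applying $\mu_{A\oplus B}$ again to obtain $(\mu_A(\alpha(a_1),\mu_A(a_2,a_3)),\mu_B(\beta(b_1),\mu_B(b_2,b_3)))$. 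The right-hand side expands analogously to $(\mu_A(\mu_A(a_1,a_2),\alpha(a_3)),\mu_B(\mu_B(b_1,b_2),\beta(b_3)))$. The two sides agree coordinate by coordinate precisely because \eqref{Homassoc} holds in $A$ and in $B$ separately.

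For the multiplicativity axiom I would proceed in the same spirit, computing $(\alpha+\beta)(\mu_{A\oplus B}(x,y))=(\alpha(\mu_A(a_1,a_2)),\beta(\mu_B(b_1,b_2)))$ on one side and $\mu_{A\oplus B}((\alpha+\beta)(x),(\alpha+\beta)(y))=(\mu_A(\alpha(a_1),\alpha(a_2)),\mu_B(\beta(b_1),\beta(b_2)))$ on the other, with equality in each coordinate being exactly \eqref{multiplicativity} for $A$ and for $B$. There is no genuine obstacle here beyond careful bookkeeping; the only point demanding attention is keeping the $A$- and $B$-components in their correct slots throughout the nested applications of $\mu_{A\oplus B}$, so that the reduction to the two separate algebras stays transparent.
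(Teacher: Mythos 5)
Your proof is correct and follows essentially the same route as the paper: a direct componentwise computation showing that Hom-associativity in $A\oplus B$ reduces coordinate by coordinate to Hom-associativity in $A$ and in $B$. The only difference is that you also verify the multiplicativity condition \eqref{multiplicativity}, which the paper's proof leaves implicit --- a welcome completion rather than a departure, since the paper works throughout with multiplicative Hom-associative algebras.
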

\begin{proof}
For any $a_i\in A, \, b_i\in B$, by direct computation, we get
$$\begin{array}{ll} 
&\mu_{A\oplus B}((\alpha+\beta)(a_1,b_1), \mu_{A\oplus B}(a_2+b_2,a_3+b_3))=
=\mu_{A\oplus B}((\alpha+\beta)(a_1,b_1), (\mu_A(a_2,a_3), \mu_B(b_2,b_3)))\\
&=\mu_{A\oplus B}((\alpha(a_1),\beta(b_1)), (\mu_A(a_2,a_3), \mu_B(b_2,b_3)))
=(\mu_A(\alpha(a_1),\mu_A(a_2,a_3)),\mu_B(\beta(b_1),\mu_B(b_2,b_3)))\\
&=(\mu_A(\mu_A(a_1,a_2),\alpha(a_3)),\mu_B(\mu_B(b_1,b_2),\beta(b_3)))
=\mu_{A\oplus B}(\mu_{A\oplus B}(a_1+b_1,a_2+b_2),(\alpha+\beta)(a_3,b_3))).
\end{array}$$
This ends the proof.
\end{proof}
\noindent A  Hom-associative algebra morphism  
$\phi : (A, \mu_A, \alpha)\rightarrow (B, \mu_B, \beta)$ is a linear map $\phi : A\rightarrow B$ such that
$\phi\circ \mu_A(a, b)=\mu_B\circ(\phi(a),\phi(b)), \forall a, b\in A, \quad \phi\circ \alpha=\beta\circ \phi.$
Denote by $\xi_\phi\subset A\oplus B$, the graph of linear map $\phi : A\rightarrow B.$
\begin{proposition}
A linear map $\phi : (A, \mu_A, \alpha)\rightarrow (B, \mu_B, \beta)$ is a  Hom-associative algebra morphism if and only if the graph 
$\xi_\phi\subset A\oplus B$ is a Hom-associative subalgebra of $(A\oplus B, \mu_{A\oplus B}, \alpha+\beta)$. 
\end{proposition}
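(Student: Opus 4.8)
The plan is to prove both implications by directly unwinding what it means for the graph $\xi_\phi=\{(a,\phi(a)):a\in A\}$ to be a Hom-associative subalgebra of $(A\oplus B,\mu_{A\oplus B},\alpha+\beta)$, namely that $\xi_\phi$ is closed under the product $\mu_{A\oplus B}$ and invariant under the twist map $\alpha+\beta$. The single observation driving everything is that a pair $(c,d)\in A\oplus B$ belongs to $\xi_\phi$ precisely when $d=\phi(c)$; I would also record at the outset that $\xi_\phi$ is automatically a linear subspace because $\phi$ is linear, so that "subalgebra" adds only the closure and invariance requirements.

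For the forward direction, I would assume $\phi$ is a Hom-associative morphism, so that $\phi\circ\mu_A=\mu_B\circ(\phi\otimes\phi)$ and $\beta\circ\phi=\phi\circ\alpha$. First I would take two arbitrary elements $(a,\phi(a))$ and $(a',\phi(a'))$ of $\xi_\phi$ and compute $\mu_{A\oplus B}((a,\phi(a)),(a',\phi(a')))=(\mu_A(a,a'),\mu_B(\phi(a),\phi(a')))$; the morphism condition rewrites the second component as $\phi(\mu_A(a,a'))$, so the product lies in $\xi_\phi$. Next I would apply the twist, $(\alpha+\beta)(a,\phi(a))=(\alpha(a),\beta(\phi(a)))$, and use $\beta\circ\phi=\phi\circ\alpha$ to rewrite the second component as $\phi(\alpha(a))$, placing the image back in $\xi_\phi$. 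This shows $\xi_\phi$ is a Hom-associative subalgebra.

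For the converse, I would assume $\xi_\phi$ is a Hom-associative subalgebra. Closure under $\mu_{A\oplus B}$ forces $(\mu_A(a,a'),\mu_B(\phi(a),\phi(a')))\in\xi_\phi$ for all $a,a'\in A$, and by the defining property of the graph its second coordinate must be $\phi$ applied to its first; that is $\mu_B(\phi(a),\phi(a'))=\phi(\mu_A(a,a'))$, which is exactly multiplicativity of $\phi$. Similarly, invariance under $\alpha+\beta$ yields $(\alpha(a),\beta(\phi(a)))\in\xi_\phi$, hence $\beta(\phi(a))=\phi(\alpha(a))$, i.e. $\beta\circ\phi=\phi\circ\alpha$. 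Together these are precisely the two conditions defining a Hom-associative morphism.

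I do not expect any serious obstacle: the argument is a pure unwinding of definitions. The only point requiring genuine care is that a Hom-associative subalgebra inherits both a closure condition (for $\mu_{A\oplus B}$) and an invariance condition (for the twist map $\alpha+\beta$), so in each direction one must remember to check both, rather than only the multiplicative part. Keeping these two conditions in lockstep with the two defining properties of a morphism is what makes the equivalence clean.
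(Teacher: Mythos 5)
Your proof is correct and follows essentially the same route as the paper's: in both directions you unwind the definitions, using closure of $\xi_\phi$ under $\mu_{A\oplus B}$ to obtain $\mu_B\circ(\phi\otimes\phi)=\phi\circ\mu_A$ and invariance under $\alpha+\beta$ to obtain $\beta\circ\phi=\phi\circ\alpha$, exactly as the paper does. Your explicit remarks that $\xi_\phi$ is automatically a linear subspace and that membership in the graph means the second coordinate equals $\phi$ of the first are small clarifications the paper leaves implicit, but they do not change the argument.
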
  
\begin{proof}
Let $\phi : (A, \mu_A, \alpha)\rightarrow (B, \mu_B, \beta)$ be a  Hom-associative algebra morphism. Then for any $a, b\in A,$ we have 
$$\mu_{A\oplus B}((a, \phi(a)), (b, \phi(b))=(\mu_A(a,b),\mu_B(\phi(a),\phi(b)))=(\mu_A(a,b),\phi\mu_A(a,b)).$$ Thus the graph $\xi_\phi$ is closed under the product $\mu_{A\oplus B}$. Furthermore, since $\phi\circ \alpha=\beta\circ\phi$ we have 
$$(\alpha+\beta)(a, \phi(a))=(\alpha(a),\beta\circ\phi(a))=(\alpha(a), \phi\circ\alpha(a)),$$ which implies that 
$(\alpha+\beta)\subset \xi_\phi.$ Thus $\xi_\phi$ is a Hom-associative subalgebra of $(A\oplus B, \mu_{A\oplus B}, \alpha+\beta)$.

Conversely, if the graph $\xi_\phi\subset A\oplus B$ is a Hom-associative subalgebra of $(A\oplus B, \mu_{A\oplus B}, \alpha+\beta)$, then we have 
 $$\mu_{A\oplus B}((a,\phi(a)),(b,\phi(b)))=(\mu_A(a,b),\mu_B(\phi(a),\phi(b))\in \xi_\phi,$$ which implies that 
 $\mu_B(\phi(a), \phi(b))=\phi\circ\mu_A(a,b).$ Furthermore, $(\alpha+\beta)(\xi_\phi)\subset \xi_\phi$ yields that 
 $$(\alpha+\beta)(a,\phi(a))=(\alpha(a), \beta\circ\phi(a))\in \xi_\phi,$$ which is equivalent to the condition 
$\beta\circ \phi(a)=\phi\circ\alpha(a)$. Therefore, $\phi$ is a  Hom-associative algebra morphism. 
\end{proof}

 \subsection{Unital Hom-associative algebras}
In this section we discuss unital Hom-associative algebras.  We   denote by  $\mathcal{UHA}ss_n$ the set of $n$-dimensional unital Hom-associative algebras.
\begin{proposition}
Let  $(A, \mu,\alpha)$ be a Hom-associative algebra. We set $\tilde{A}=span(A, u)$ the vector space generated by elements of 
$A$ and $u$. Assume  $\mu(x,u)=\mu(u, x)=\alpha(x),\, \forall x\in A$ and $\alpha(u)=u$. Then $(\tilde{A}, \mu,\alpha, u)$ is a unital
Hom-associative algebra.
\end{proposition}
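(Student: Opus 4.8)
The plan is to verify directly that the quadruple $(\tilde{A},\mu,\alpha,u)$ meets all three requirements of Definition \ref{d}. Two of them—the unital condition $\mu(x,u)=\mu(u,x)=\alpha(x)$ and the normalization $\alpha(u)=u$—are already among the hypotheses for $x\in A$, so the only genuine content is to confirm that $(\tilde{A},\mu,\alpha)$ is itself a (multiplicative) Hom-associative algebra once $\mu$ and $\alpha$ have been extended to the enlarged space. First I would make these extensions explicit: extend $\alpha$ linearly to $\tilde{A}$ by declaring $\alpha(u)=u$, and extend $\mu$ bilinearly by setting $\mu(x,u)=\mu(u,x)=\alpha(x)$ for $x\in A$ together with $\mu(u,u)=\alpha(u)=u$ (consistent with applying the unital rule to $x=u$). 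The unital axiom and $\alpha(u)=u$ on all of $\tilde{A}$ then follow immediately by linearity.

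Next I would verify multiplicativity, i.e.\ $\alpha(\mu(x,y))=\mu(\alpha(x),\alpha(y))$ on $\tilde{A}$. Since both sides are bilinear, it suffices to check it when each argument is either in $A$ or equal to $u$. The case of two arguments in $A$ is the hypothesis; the mixed cases and the pure-$u$ case all collapse to $\alpha^2(\cdot)$ or to $u$ on both sides, using $\alpha(u)=u$ and the unital rule.

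The core of the proof is the Hom-associativity identity \eqref{Homassoc} on $\tilde{A}$. Being trilinear, it reduces to the eight cases where $x,y,z$ each equal $u$ or lie in $A$. The all-$A$ case is the given identity, and the cases with two or three arguments equal to $u$ are routine, since both sides reduce to $\alpha^2$ applied to the single $A$-argument (or to $u$). I expect the main obstacle to be the cases in which exactly one argument equals $u$; the decisive pair is $(u,y,z)$ and $(x,y,u)$, where one side produces $\alpha(\mu(\cdot,\cdot))$ while the other produces $\mu(\alpha(\cdot),\alpha(\cdot))$, so the equality is not formal—it holds precisely because of the multiplicativity established in the previous step. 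The remaining single-$u$ case $(x,u,z)$ is immediate, as both sides equal $\mu(\alpha(x),\alpha(z))$. Once these cases are dispatched, all three axioms of Definition \ref{d} hold and $(\tilde{A},\mu,\alpha,u)$ is a unital Hom-associative algebra.
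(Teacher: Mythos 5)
Your proof is correct and takes essentially the same route as the paper: the paper's proof is the same direct verification, but it displays only one representative case, namely $\mu(\mu(x,y),\alpha(u))=\mu(\mu(x,y),u)=\alpha(\mu(x,y))=\mu(\alpha(x),\alpha(y))=\mu(\alpha(x),\mu(y,u))$, which is precisely one of your two ``decisive'' single-$u$ cases where multiplicativity \eqref{multiplicativity} is what makes the equality hold. You simply execute in full the case analysis the paper dismisses as straightforward, including the extension $\mu(u,u)=u$ and the check that multiplicativity holds on all of $\tilde{A}$, which the paper uses implicitly.
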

\begin{proof}
It is straightforward  to check the Hom-associativity. For example 
 
$\begin{array}{ll}
\mu(\mu(x,y), \alpha(u))=\mu(\mu(x, y), u)=\alpha(\mu(x,y))=\mu(\alpha(x),\alpha(y))=\mu(\alpha(x),\mu(y,u)).
\end{array}$
\end{proof}
 \begin{remark}
Some unital Hom-associative algebras cannot be obtained as an extension of a non-unital Hom-associative algebra.
\end{remark}
\begin{remark}
Let $(A, \mu, \alpha, u)$ be a $n$-dimensional unital Hom-associative algebra and $\phi : A\rightarrow A$ be an invertible linear map
such that $\phi(u)=u$. Then it is isomorphic to a $n$-dimensional Hom-associative algebra $(A, \mu', \phi\alpha\phi^{-1}, u)$ where 
$\mu'=\phi\circ\mu\circ(\phi^{-1}\otimes\phi^{-1}$). Moreover, if $\left\{C^k_{ij}\right\}$ are the structure constants of $\mu$ with respect to the basis $\left\{e_1,\dots,e_n\right\}$ with $e_1=u$ being the unit, then $\mu'$ has the same structure constants with respect to the basis 
$\left\{\phi(e_1),\dots,\phi(e_n)\right\}$ with $u$ the unit element.

Indeed, we use  Proposition \ref{p1} and Definition \ref{d}. The unit is conserved since
$\mu'(x,e_1)=\phi\circ\mu(\phi^{-1}(x), \phi^{-1}(e_1))=\phi\circ\alpha\circ\phi^{-1}(x).$  
\end{remark}
\begin{proposition}
Let $(A_1, \mu_1, \alpha_1, u_1)$ and $(A_2, \mu_2, \alpha_2,u_2)$ be two unital  Hom-associative algebras. Suppose there exists a Hom-associative algebra morphism $\phi : A_1\rightarrow A_2$  with $\phi(u_1)=u_2$. If $(A_1, \mu'_1, u'_1)$ is an untwist of $(A_1, \mu_1, \alpha_1, u_1)$ then there exists an untwist of $(A_2, \mu_2, \alpha_2, u_2)$ such that $\phi : (A_1, \mu'_1, u'_1)\rightarrow (A_2, \mu'_2, u'_2)$ is an algebra morphism. 
\end{proposition}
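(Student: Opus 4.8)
The plan is to mirror the structure of the proof of Proposition~\ref{p3}, since the present statement is essentially its unital refinement. The hypothesis gives an untwist $(A_1,\mu'_1,u'_1)$, meaning an associative algebra with $\mu_1=\alpha_1\circ\mu'_1$ and unit $u'_1$ satisfying $\alpha_1(u'_1)=u_1$ by the unital conditions of Definition~\ref{d}. The natural candidate for the untwist of the target is the transported multiplication $\mu'_2=\phi\circ\mu'_1\circ(\phi^{-1}\otimes\phi^{-1})$ together with the transported unit $u'_2=\phi(u'_1)$. First I would verify that $(A_2,\mu'_2)$ is associative: this is exactly the transport-of-structure computation already carried out in Proposition~\ref{p3} (replacing $\mu_1$ by $\mu'_1$), so I would simply invoke that argument rather than repeat the six-line chain of equalities.

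Second, I would check the compatibility $\mu_2=\alpha_2\circ\mu'_2$, which is what makes $(A_2,\mu'_2)$ an untwist of $(A_2,\mu_2,\alpha_2)$. Using $\mu_1=\alpha_1\circ\mu'_1$, the morphism relations $\phi\circ\mu_1=\mu_2\circ(\phi\otimes\phi)$ and $\alpha_2\circ\phi=\phi\circ\alpha_1$, together with invertibility of $\phi$ (which is needed to write $\mu'_2$ via $\phi^{-1}$), one computes for all $u,v\in A_2$ that
\[
\alpha_2\circ\mu'_2(u,v)=\alpha_2\circ\phi\circ\mu'_1(\phi^{-1}(u),\phi^{-1}(v))=\phi\circ\alpha_1\circ\mu'_1(\phi^{-1}(u),\phi^{-1}(v))=\phi\circ\mu_1(\phi^{-1}(u),\phi^{-1}(v))=\mu_2(u,v),
\]
again following the display in Proposition~\ref{p3}. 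This shows $(A_2,\mu'_2)$ is genuinely an untwist.

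Third comes the new unital content not present in Proposition~\ref{p3}: I must confirm that $u'_2=\phi(u'_1)$ is a unit for $\mu'_2$ and that it is the correct unit compatible with $u_2$. For the unit property, for any $u\in A_2$ write $u=\phi(x)$ with $x=\phi^{-1}(u)$; then $\mu'_2(u'_2,\phi(x))=\phi\circ\mu'_1(u'_1,x)=\phi(x)=u$ since $u'_1$ is the unit of $\mu'_1$, and symmetrically on the other side. To tie $u'_2$ to $u_2$ I would use $\alpha_2(u'_2)=\alpha_2(\phi(u'_1))=\phi(\alpha_1(u'_1))=\phi(u_1)=u_2$, which uses both the intertwining relation and the hypothesis $\phi(u_1)=u_2$; this is the analogue of $\alpha_1(u'_1)=u_1$ in the source algebra and confirms that $u'_2$ is the unit of the untwist sitting over $u_2$.

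Finally, that $\phi:(A_1,\mu'_1,u'_1)\to(A_2,\mu'_2,u'_2)$ is an algebra morphism is immediate from the very definition of $\mu'_2$, namely $\phi\circ\mu'_1=\mu'_2\circ(\phi\otimes\phi)$ after inserting $\phi^{-1}\circ\phi=\id$, and $\phi(u'_1)=u'_2$ holds by construction. I do not anticipate a serious obstacle here: the associativity and untwist verifications are literally the computations of Proposition~\ref{p3}, and the only genuinely additional work is the bookkeeping on the units, where the mild subtlety is making sure one distinguishes $u_i$ (the Hom-unit) from $u'_i$ (the associative unit of the untwist) and correctly uses $\alpha_i(u'_i)=u_i$ rather than $u'_i=u_i$. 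The main thing to be careful about is that invertibility of $\phi$ is essential both to define $\mu'_2$ and to express every element of $A_2$ as $\phi(x)$ in the unit verification.
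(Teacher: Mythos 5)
Your proposal is correct and takes essentially the same route as the paper: the paper's proof likewise invokes Proposition~\ref{p3} to transport the untwisted product along $\phi$ and then checks compatibility with the units by a short computation. Your version is in fact the more careful rendering of that argument --- your explicit choices $\mu'_2=\phi\circ\mu'_1\circ(\phi^{-1}\otimes\phi^{-1})$ and $u'_2=\phi(u'_1)$, the verification $\alpha_1(u'_1)=u_1$, and your remark that invertibility of $\phi$ is tacitly required (since the paper's appeal to Proposition~\ref{p3} already presupposes it) make precise what the paper's terse display leaves implicit.
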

\begin{proof}
Since $\phi$ is a homomorphism from $(A_1, \mu_1, \alpha_1, u_1)$ to $(A_2, \mu_2, \alpha_2,u_2)$,  then 
$\alpha_2\phi=\phi\alpha_1,$  and for all $x\in A$ we have $\mu_2(\phi(x),\phi(u_1))=\mu_2(\phi(x),u_2)=\alpha_2\circ\phi(x)$ and 
$\phi\circ\mu_1(x,u_1)=\phi\circ\alpha_1(x)$. By Proposition \ref{p3}, we can see that $(A_2, \mu_2, u_2)$ is also an associative algebra.
Furthermore

$\begin{array}{ll}
\mu'_2(\phi(x), \phi(u_1))=\mu'_2(\phi(x), u_2)
&=\phi\circ\alpha'_1\circ\phi(x)=\phi\circ\alpha_1\circ\mu_1(x,u_1)\\
&=\alpha_2\circ\phi\circ\mu_1(x,u_1)=\alpha_2\circ\mu_2(\phi(x), u_2).
\end{array}$
\end{proof}

\section{Simple Hom-associative algebras}
In this section, we study  and characterize simple multiplicative Hom-associative algebras. Then we provide  exemples by considering $2\times 2$ matrix algebra. This study is inspired by the study of  simple Hom-Lie 
algebras in \cite{Chen}.
\begin{definition}
Let $(A, \mu, \alpha)$ be a Hom-associative algebra. A subspace $H$ of $A$ is called a Hom-associative subalgebra of $(A, \mu, \alpha)$
if $\alpha(H)\subseteq H$ and $\mu(H,H)\subseteq H.$ In particular, a Hom-associative subalgebra $H$ is said to be a two-sided ideal of 
$(A, \mu, \alpha)$ if $\mu(H, A)\subseteq H$ and  $\mu(A, H)\subseteq H$.
\end{definition}
\begin{definition}The set 
\begin{equation}
C(A)=\left\{x\in A|\mu(x,y)=\mu(y,x),\,\mu(\alpha(x),y)=\mu(y,\alpha(x)),\,\forall y\in A\right\}\nonumber 
\end{equation}
is called the center of $(A, \mu, \alpha).$
\end{definition}
Clearly, $C(A)$ is a two-sided ideal.
\begin{lemma}\label{p7}
Let $(A, \mu, \alpha)$ be a multiplicative Hom-associative algebra, then $(Ker(\alpha), \mu, \alpha)$ is a two-sided ideal. 
\end{lemma}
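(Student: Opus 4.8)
The plan is to verify directly that $H := Ker(\alpha)$ meets both requirements in the definition of a two-sided ideal, namely that it is a Hom-associative subalgebra ($\alpha(H)\subseteq H$ and $\mu(H,H)\subseteq H$) and that the absorption inclusions $\mu(H,A)\subseteq H$ and $\mu(A,H)\subseteq H$ hold. The only tool I expect to need is the multiplicativity condition \eqref{multiplicativity} together with the bilinearity of $\mu$.

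First I would dispose of the invariance $\alpha(H)\subseteq H$: if $x\in H$ then by definition $\alpha(x)=0$, which certainly lies in $H$, so $\alpha(Ker(\alpha))=\{0\}\subseteq Ker(\alpha)$. This is immediate and requires no structural input.

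The heart of the argument is the absorption property. I would fix $x\in H$, so that $\alpha(x)=0$, and let $y\in A$ be arbitrary. Applying \eqref{multiplicativity} gives $\alpha(\mu(x,y))=\mu(\alpha(x),\alpha(y))=\mu(0,\alpha(y))$, and bilinearity of $\mu$ forces the right-hand side to vanish; hence $\mu(x,y)\in Ker(\alpha)=H$, establishing $\mu(H,A)\subseteq H$. The symmetric computation $\alpha(\mu(y,x))=\mu(\alpha(y),\alpha(x))=\mu(\alpha(y),0)=0$ gives $\mu(A,H)\subseteq H$. Specializing $y\in H$ in either identity yields $\mu(H,H)\subseteq H$ as a special case, so $H$ is simultaneously a Hom-associative subalgebra and an absorbing subspace on both sides.

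I do not anticipate any genuine obstacle here: the conclusion is a direct consequence of multiplicativity, and the only bookkeeping point is to recall that being a two-sided ideal formally also requires the subalgebra axioms, both of which come for free once the absorption inclusions are in hand. The one thing worth flagging is that the hypothesis of multiplicativity is essential — it is precisely \eqref{multiplicativity} that transports the vanishing of $\alpha(x)$ through the product — which explains why the statement is restricted to multiplicative Hom-associative algebras.
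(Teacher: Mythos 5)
Your proof is correct and follows essentially the same route as the paper's: both verify $\alpha(Ker(\alpha))\subseteq Ker(\alpha)$ trivially and then use the multiplicativity condition \eqref{multiplicativity} together with bilinearity to show $\alpha(\mu(x,y))=\mu(\alpha(x),\alpha(y))=0$ whenever one argument lies in the kernel, giving absorption on both sides. If anything, your write-up is slightly more careful than the paper's (which contains the typo $\mu(0,y)$ where $\mu(0,\alpha(y))$ is meant, and reuses variable names confusingly in the second half), so nothing further is needed.
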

\begin{proof}
Obviously, $\alpha(x)=0\in Ker(\alpha)$ for any $x\in Ker(\alpha).$ Since $\alpha\mu(x, y)=\mu(\alpha(x),\alpha(y)=\mu(0,y)=0$
for any $x\in Ker(\alpha)$ and $y\in A$, we get $\mu(x, y)\in Ker(\alpha).$ \\
On the other hand, we have  $\alpha(y)=0\in Ker(\alpha)$ for any $y\in Ker(\alpha).$ Since  
$\alpha\mu(x, y)=\mu(\alpha(x),\alpha(y))=\mu(x,0)=0$ for any $x\in Ker(\alpha)$ and $y\in A$, we get $\mu(x, y)\in Ker(\alpha).$ 
Therefore, $(Ker(\alpha), \mu, \alpha)$ is a two-sided ideal of $(A, \mu, \alpha)$.
\end{proof} 
\begin{definition}
Let $(A, \mu, \alpha)\, (\alpha\neq 0)$ be a non trivial Hom-associative algebra. It is said to be a simple Hom-associative algebra if
it has no proper two-sided ideal. 
\end{definition}

\begin{theorem}
Let $(A, \mu, \alpha)$ be a finite dimensional simple Hom-associative algebra. Then $\alpha$ is an automorphism,  the Hom-associative algebra is of associative type with a simple compatible associative algebra.
\end{theorem}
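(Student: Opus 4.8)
The plan is to proceed in three stages: force $\alpha$ to be invertible, read off the associative-type structure from an already-proved corollary, and then transfer simplicity across the untwisting. First I would settle that $\alpha$ is an automorphism. By Lemma \ref{p7} the kernel $\operatorname{Ker}(\alpha)$ is a two-sided ideal of $(A,\mu,\alpha)$, so by simplicity it is either $0$ or all of $A$. The case $\operatorname{Ker}(\alpha)=A$ gives $\alpha=0$, which is excluded by the standing nontriviality assumption $\alpha\neq 0$ in the definition of a simple Hom-associative algebra. Hence $\operatorname{Ker}(\alpha)=0$, so $\alpha$ is injective; since $A$ is finite dimensional this makes $\alpha$ bijective, and as $\alpha$ is multiplicative it is an algebra automorphism. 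This stage is short and I expect no trouble, since it is exactly what Lemma \ref{p7} was designed to feed.

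With $\alpha$ invertible, the second stage is immediate: I would invoke the Corollary stated after Definition \ref{Defs1} verbatim. It gives that $(A,\mu'=\alpha^{-1}\circ\mu)$ is an associative algebra, that $\alpha$ is an automorphism of $(A,\mu')$, and that $(A,\mu,\alpha)$ is of associative type with compatible associative algebra $(A,\mu')$. No new computation is needed here; everything rests on the invertibility obtained in the first stage.

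The third stage, proving that the untwist $(A,\mu')$ is simple, is where the real work lies. The natural mechanism is a correspondence between the Hom-ideals of $(A,\mu,\alpha)$ and the ideals of $(A,\mu')$. Because $\mu=\alpha\circ\mu'$ and $\alpha$ is an automorphism of $(A,\mu')$, for any subspace $I$ with $\alpha(I)\subseteq I$ finite-dimensionality forces $\alpha(I)=I$, and then $\mu(I,A)=\alpha(\mu'(I,A))$ shows that $I$ is a two-sided Hom-ideal of $(A,\mu,\alpha)$ if and only if it is a two-sided ideal of $(A,\mu')$. Given a nonzero proper ideal $I$ of $(A,\mu')$, I would form its saturation $J=\sum_{k\geq 0}\alpha^{k}(I)$ and its co-saturation $K=\bigcap_{k\in\integers}\alpha^{k}(I)$; both are $\alpha$-invariant ideals of $(A,\mu')$, hence Hom-ideals, so simplicity forces $J=A$ (since $J\supseteq I\neq 0$) and $K=0$ (since $K\subseteq I\neq A$).

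The hard part will be precisely this last step: the saturation argument only yields that $(A,\mu')$ has no proper \emph{$\alpha$-invariant} ideal, i.e. that it is $\alpha$-simple, and I expect the main obstacle to be upgrading $\alpha$-simplicity to genuine simplicity. Over an algebraically closed field one would try to push through Wedderburn theory: decompose $(A,\mu')$ into simple factors, note that $\alpha$ permutes them, and argue that the permutation must be trivial so that a single factor exhausts $A$. I would watch carefully here, because an automorphism cyclically permuting several isomorphic simple factors produces an $\alpha$-simple algebra that is not simple, so some additional hypothesis or structural input seems to be required to close this gap cleanly; identifying exactly what rules out the permuted-factor situation is, to my mind, the crux of the theorem.
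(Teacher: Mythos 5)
Your stages one and two are exactly the paper's argument: Lemma \ref{p7} plus simplicity and nontriviality give $Ker(\alpha)=\{0\}$, finite dimensionality upgrades injectivity to bijectivity, and the corollary following Definition \ref{Defs1} then produces the compatible associative algebra $(A,\mu'=\alpha^{-1}\circ\mu)$ with $\alpha$ an automorphism of $\mu'$. The gap is exactly where you say it is. Since the paper defines a two-sided ideal of a Hom-associative algebra as a Hom-associative subalgebra, hence $\alpha$-invariant, your dictionary (in finite dimension $\alpha(H)\subseteq H$ forces $\alpha(H)=H$, so Hom-ideals of $(A,\mu,\alpha)$ are precisely the $\alpha$-invariant two-sided ideals of $(A,\mu')$) delivers only $\alpha$-simplicity of $(A,\mu')$ --- your saturation $J=\sum_{k\geq 0}\alpha^k(I)$ and co-saturation $K=\bigcap_{k}\alpha^k(I)$ add nothing beyond this, since an $\alpha$-invariant ideal is already a Hom-ideal directly. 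The theorem asserts strictly more, namely that $(A,\mu')$ is simple outright, and your proposal does not prove that; as a proof of the stated theorem it is incomplete.

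You should know, however, that the step you could not close is also the step at which the paper's own proof breaks. The paper takes $A_1\neq 0$ ``the maximal two-sided ideal'' of $(A,\mu')$ and infers $\alpha(A_1)\subseteq A_1$ merely because $\alpha(A_1)$ is an ideal; that inference is valid only if $A_1$ contains every proper ideal, i.e., only if the maximal two-sided ideal is unique, whereas in general $\alpha$ permutes the maximal ideals (the subsequent strict inclusion $\alpha\mu'(A_1,A)\varsubsetneqq\alpha(A_1)$ is likewise unjustified). Your permuted-factor scenario realizes this failure concretely rather than merely hypothetically: take $A=B\oplus B$ with $B$ simple associative (for instance $\mathcal{M}_2$), let $\alpha$ be the factor swap and $\mu=\alpha\circ\mu'$, a Yau twist and hence a multiplicative Hom-associative algebra. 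Every two-sided ideal in the paper's sense is $\alpha$-invariant by definition, and the only $\alpha$-invariant ideals of $B\oplus B$ are $0$ and $A$; so $(A,\mu,\alpha)$ is simple in the paper's sense while its unique untwist $B\oplus B$ is not simple. Thus the implication you left open is not just unproved but false under the paper's definitions; the strongest correct conclusion at this stage is $\alpha$-simplicity, which by Wedderburn theory (the radical is $\alpha$-invariant, hence zero) means $(A,\mu')$ is a direct sum of isomorphic simple ideals permuted transitively by $\alpha$, with genuine simplicity only in the single-summand case. Your instinct that some additional hypothesis or structural input is needed to rule out the permuted factors is exactly right, and the paper supplies none.
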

\begin{proof}
According to Lemma \ref{p7}, $Ker(\alpha)$ is a two-sided ideal. Since the Hom-associative algebra is simple, either 
$Ker(\alpha)=\left\{0\right\}$ or $Ker(\alpha)=A$. The Hom-associative algebra is nontrivial, therefore $Ker(\alpha)\neq A$.

Thus, $A$ is of associative type. 
Let $(A, \mu'=\alpha^{-1}\mu )$ be the  induced associative algebra of the multiplicative simple  Hom-associative algebra ($A, \mu,\alpha$).
Clearly, $\alpha$ is both an automorphism of ($A, \mu,\alpha$) and ($A, \mu'$).  Indeed  $\alpha\mu'(x,y)=\alpha\alpha^{-1}\mu(x,y)=\alpha^{-1}\mu(\alpha(x),\alpha(y))=\mu'(\alpha(x),\alpha(y))$. 

Suppose that $A_1\neq 0$, is the maximal two-sided ideal of ($A, \mu'$). Because $\alpha(A_1)$ is also a two-sided ideal of 
($A, \mu'$), then $\alpha(A_1)\subseteq A_1$. Moreover, $$\mu(A_1,A)=\alpha\mu'(A_1,A)\subseteq \alpha(A_1)\subseteq A_1$$ and 
$$\mu(A, A_1)=\alpha\mu'(A, A_1)\subseteq \alpha(A_1)\subseteq A_1.$$ So $A_1$ is a two-sided ideal of $(A, \mu, \alpha)$. Then $A_1=A$,
and we have $$\mu(A,A)=\mu(A_1,A)=\alpha\mu'(A_1,A)\varsubsetneqq\alpha(A_1)\subseteq A_1=A$$ and 
$$\mu(A,A)=\mu(A, A_1)=\alpha\mu'(A, A_1)\varsubsetneqq\alpha(A_1)\subseteq A_1=A.$$ Furthermore, since $(A, \mu, \alpha)$ is a
multiplicative simple Hom-associative algebra, we clearly have $\mu(A,A)=A$. It is contradiction. Hence $A_1=0$.     
\end{proof} 

By the above theorem, there exists an induced associative algebra for any multiplicative simple Hom-associative algebra $(A, \mu, \alpha)$
and $\alpha$ is an automorphism of the induced associative algebra, in addition  to this their products are mutually determined.
\begin{theorem}
Two simple Hom-associative algebras $(A_1, \mu_1, \alpha)$ and $(A_2, \mu_2, \beta)$ are isomorphic if and only if there exists an associative algebra isomorphism $\varphi$ : $A_1\rightarrow A_2$ (between their induced associative algebras) satisfying 
$\varphi\circ\alpha=\beta\circ\varphi.$ In other words, the two associative algebra automorphisms $\alpha$ and $\beta$ are conjugate.  
\end{theorem}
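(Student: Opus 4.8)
The plan is to lean entirely on the preceding theorem, which guarantees that for both simple Hom-associative algebras the twist maps $\alpha$ and $\beta$ are automorphisms (in particular invertible), and that each algebra is of associative type. Thus the induced associative algebras carry the multiplications $\mu_1'=\alpha^{-1}\circ\mu_1$ and $\mu_2'=\beta^{-1}\circ\mu_2$, equivalently $\mu_1=\alpha\circ\mu_1'$ and $\mu_2=\beta\circ\mu_2'$. The whole argument rests on one elementary observation: a linear bijection $\varphi$ satisfying $\varphi\circ\alpha=\beta\circ\varphi$ automatically satisfies $\varphi\circ\alpha^{-1}=\beta^{-1}\circ\varphi$, which I obtain by rewriting the hypothesis as $\alpha=\varphi^{-1}\circ\beta\circ\varphi$ and inverting both sides.

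For the forward implication, I would assume $\varphi:A_1\rightarrow A_2$ is a Hom-associative algebra isomorphism. The intertwining relation $\varphi\circ\alpha=\beta\circ\varphi$ is part of the definition of morphism, so the conjugacy condition holds at once; it remains only to verify that $\varphi$ is multiplicative for the untwisted products. I would compute $\varphi(\mu_1'(x,y))=\varphi(\alpha^{-1}\mu_1(x,y))=\beta^{-1}\varphi(\mu_1(x,y))=\beta^{-1}\mu_2(\varphi(x),\varphi(y))=\mu_2'(\varphi(x),\varphi(y))$, using in order the definition of $\mu_1'$, the intertwining for inverses, the morphism property of $\varphi$, and the definition of $\mu_2'$. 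Since $\varphi$ is bijective, it is an isomorphism of the induced associative algebras.

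For the converse, I would assume $\varphi$ is an associative algebra isomorphism between $(A_1,\mu_1')$ and $(A_2,\mu_2')$ satisfying $\varphi\circ\alpha=\beta\circ\varphi$. Then $\varphi$ is already bijective and intertwines the twist maps, so only multiplicativity for $\mu_1,\mu_2$ needs checking, and I would run the same chain in the reverse order: $\varphi(\mu_1(x,y))=\varphi(\alpha\mu_1'(x,y))=\beta\varphi(\mu_1'(x,y))=\beta\mu_2'(\varphi(x),\varphi(y))=\mu_2(\varphi(x),\varphi(y))$. Hence $\varphi$ is a bijective Hom-associative morphism, i.e. an isomorphism of the two Hom-associative algebras. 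Finally, the reformulation as conjugacy is immediate: the identity $\varphi\circ\alpha=\beta\circ\varphi$ is equivalent to $\beta=\varphi\circ\alpha\circ\varphi^{-1}$, so the existence of such a $\varphi$ means precisely that $\alpha$ and $\beta$ are conjugate automorphisms of the (isomorphic) induced associative algebra.

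I expect no genuine obstacle here: the substance of the statement is carried by the previous theorem, which supplies the invertibility of the twist maps and the associative-type structure. The only point requiring care is the passage between the twisted product $\mu_i$ and the untwisted product $\mu_i'$, together with the observation that intertwining of $\alpha$ and $\beta$ lifts to their inverses; both directions then reduce to symmetric two-line computations.
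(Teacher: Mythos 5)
Your proposal is correct and follows essentially the same route as the paper's own proof: both directions rest on the preceding theorem (invertibility of the twist maps and the associative-type structure), the key observation that $\varphi\circ\alpha=\beta\circ\varphi$ implies $\varphi\circ\alpha^{-1}=\beta^{-1}\circ\varphi$, and the identical two-line computations translating multiplicativity between $\mu_i$ and $\mu_i'=\alpha^{-1}\circ\mu_i$ (resp.\ $\beta^{-1}\circ\mu_2$). If anything, your write-up is slightly cleaner, as the paper's chain for the converse contains a small typographical slip ($\beta\tilde{\mu}_2$ written as $\beta\mu_2$) that your version avoids.
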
 
\begin{proof}
Let $(A_1, \tilde{\mu}_1)$ and $(A_2, \tilde{\mu}_2)$ be the  induced associative algebras of $(A_1, \mu_1, \alpha)$ and $(A_2, \mu_2, \beta)$, respectively.
Suppose $\varphi : (A_1, \mu_1, \alpha)\rightarrow (A_2, \mu_2, \beta)$ is an isomorphism of Hom-associative algebras, then 
$\varphi\circ\alpha=\beta\circ\varphi$, thus $\varphi\circ\alpha^{-1}=\beta^{-1}\circ\varphi$. Moreover, 
$$
\varphi\tilde{\mu}_1(x, y)=\varphi\circ\alpha^{-1}\circ\alpha\tilde{\mu}_1(x,y)
=\varphi\circ\alpha^{-1}\mu_1(x,y)=\beta^{-1}\circ\varphi\mu_1(x,y)
=\beta^{-1}(\mu_2(\varphi(x),\varphi(y)))=\tilde{\mu}_2(\varphi(x),\varphi(y)).
$$   
So, $\varphi$ is an isomorphism between the two induced associative algebras. 

On the other hand, if there exists an isomorphism $\varphi$ between the induced associative algebras 
$(A_1,\tilde{\mu}_1)$ and $(A_2,\tilde{\mu}_2)$ 
such that $\varphi\circ\alpha=\beta\circ\varphi$, then\\ 
 $  \varphi\mu_1(x, y)=\varphi\circ\alpha\tilde{\mu}_1(x,y)
=\beta\circ\tilde{\mu}_2(\varphi(x), \varphi(y))
=\beta(\mu_2(\varphi(x),\varphi(y))=\mu_2(\varphi(x), \varphi(y)).
$   
\end{proof}
\subsection{Examples of simple Hom-associative algebras}

 We consider the simple associative algebra defined by
$2\times 2$ matrices, which we denote by $\mathcal{M}_2$. 
Let $\mathcal{B}=\left\{E_{ij}\right\}_{i=1,2 \atop j=1,2}$ be the canonical basis given by elementary matrices.
We seek first  for algebra morphisms $\varphi$ of $\mathcal{M}_2$, that is linear maps such that   
 $$\varphi(E_{ij}).\varphi(E_{kl})=\varphi(E_{ij}.E_{kl})=\delta_{jk}\varphi(E_{il}).$$
 Then we apply the previous theorem to construct families of 4-dimensional simple Hom-associative algebras. We obtain by straightforward calculation the 
following algebra morphisms where  $\delta_{ij}$ is the Kronecker symbol.

\begin{small}
\noindent\textbf{Morphism 1}    
$$\left\{\begin{array}{c} 
\begin{array}{ll} 
\varphi(E_{11})=E_{11}-i\frac{\sqrt{\beta_2}}{\sqrt{\beta_1}}E_{21}
&\varphi(E_{12})=i\sqrt{\beta_1}\sqrt{\beta_2}E_{11}+\beta_1E_{12}+\beta_2E_{21}-i\sqrt{\beta_1}\sqrt{\beta_2}E_{22}\\
\varphi(E_{21})=\frac{E_{21}}{\beta_1}
&\varphi(E_{22})=i\frac{\sqrt{\beta_2}}{\sqrt{\beta_1}}E_{21}+E_{22}
\end{array} 
\end{array}\right.$$
\textbf{Morphism 2}    
$$\left\{\begin{array}{c} 
\begin{array}{ll}  
\varphi(E_{11})=E_{11}+i\frac{\sqrt{\beta_2}}{\sqrt{\beta_1}}E_{21}
&\varphi(E_{12})=-i\sqrt{\beta_1}\sqrt{\beta_2}E_{11}+\beta_1E_{12}+\beta_2E_{21}+i\sqrt{\beta_1}\sqrt{\beta_2}E_{22}\\
\varphi(E_{21})=\frac{E_{21}}{\beta_1}
&\varphi(E_{22})=-i\frac{\sqrt{\beta_2}}{\sqrt{\beta_1}}E_{21}+E_{22}
\end{array}
\end{array}\right.$$
\textbf{Morphism 3}  
$$\left\{\begin{array}{c} 
\begin{array}{ll} 
\varphi(E_{11})=E_{11}-\lambda_1E_{21}
&\varphi(E_{12})=-\frac{\beta_2}{\lambda_1}E_{11}-\frac{\beta_2}{\gamma^2_2}E_{12}+\beta_2E_{21}+\frac{\beta_2}{\lambda_1}E_{22}\\
\varphi(E_{21})=-\frac{\lambda^2_1}{\beta_2}E_{21}
&\varphi(E_{22})=\lambda_1E_{21}+E_{22}
\end{array}   
\end{array}\right.$$
\textbf{Morphism 4}  
$$\left\{\begin{array}{c}
\begin{array}{ll}    
\varphi(E_{11})=E_{11}-\lambda_1E_{21}
&\varphi(E_{12})=\beta_1\lambda_1E_{11}+\beta_1E_{12}-\beta_1\lambda_1^2E_{21}-\beta_1E_{22}\\
\varphi(E_{21})=\frac{E_{21}}{\beta_1}
&\varphi(E_{22})=\lambda_1E_{21}+E_{22}
\end{array}
\end{array}\right.,$$
\textbf{Morphism 5}  
$$\left\{\begin{array}{c}
\begin{array}{ll}    
\varphi(E_{11})=E_{11}+\beta_3\gamma_1E_{21}
&\varphi(E_{12})=-\beta_3E_{11}+\frac{E_{12}}{\gamma_1}-\beta_3\gamma_1E_{21}+\beta_3E_{22}\\
\varphi(E_{21})=\gamma_1E_{21}
&\varphi(E_{22})=-\beta_3\gamma_1E_{21}+E_{22}
\end{array}
\end{array}\right.,$$
\textbf{Morphism 6}  
$$\left\{\begin{array}{c}
\begin{array}{ll}    
\varphi(E_{11})=i\sqrt{\beta_2}\sqrt{\gamma_1}E_{21}+E_{22}
&\varphi(E_{12})=\beta_2E_{21}\\
\varphi(E_{21})=i\frac{\sqrt{\gamma_1}}{\sqrt{\beta_2}}E_{11}+\frac{E_{12}}{\beta_2}+\gamma_1E_{21}-i\frac{\sqrt{\gamma_1}}{\sqrt{\beta_2}}E_{22}
&\varphi(E_{22})=E_{11}-i\sqrt{\beta_2}\sqrt{\gamma_1}E_{21}
\end{array}
\end{array}\right.,$$
\textbf{Morphism 7}  
$$\left\{\begin{array}{c}
\begin{array}{ll}    
\varphi(E_{11})=\frac{\beta_4}{\beta_2}E_{12}+E_{22}
&\varphi(E_{12})=\beta_4E_{11}-\frac{\beta^2_4}{\beta_2}E_{12}+\beta_2E_{21}-\beta_4E_{22}\\
\varphi(E_{21})=\frac{E_{12}}{\beta_2}
&\varphi(E_{22})=E_{11}-\frac{\beta_4}{\beta_2}E_{12}
\end{array}
\end{array}\right.,$$
\textbf{Morphism 8}  
$$\left\{\begin{array}{c}
\begin{array}{ll}    
\varphi(E_{11})=E_{11}+\gamma_2E_{12}
&\varphi(E_{12})=\frac{E_{12}}{\gamma_1}\\
\varphi(E_{21})=-\gamma_2E_{11}-\frac{\gamma^2_2}{\gamma_1}E_{12}+\gamma_1E_{21}+\gamma_2E_{22}
&\varphi(E_{22})=-\frac{\beta_2}{\beta_1}E_{12}+E_{22}
\end{array}
\end{array}\right.,$$
\textbf{Morphism 9}  
$$\left\{\begin{array}{c}
\begin{array}{ll}    
\varphi(E_{11})=-\gamma_2E_{21}+E_{22}
&\varphi(E_{12})=\frac{E_{21}}{\gamma_4}\\
\varphi(E_{21})=-\gamma_2E_{11}+\gamma_4E_{12}-\frac{\gamma^2_2}{\gamma_4}E_{21}+\gamma_2E_{22}
&\varphi(E_{22})=E_{11}+\frac{\gamma_2}{\gamma_4}E_{21}
\end{array}
\end{array}\right.,$$
\end{small}
where $\beta_1,\beta_2,\beta_3,\beta_4,\lambda_1,\lambda_2,\lambda_3,\lambda_4,\gamma_1,\gamma_2,\gamma_3,\gamma_4\in \mathbb{C}$ are parameters.

They lead to simple Hom-associative algebras $(\mathcal{M}_2, \ast, \varphi)$ where $E_{ij}\ast E_{pq}=\varphi(E_{ij}E_{pq}).$\\
Therefore, the multiplication tables are given as follows : 

\begin{small}
\noindent \textbf{Algebra 1}    
$$\left\{\begin{array}{c}  
\begin{array}{ll}
E_{11}\ast E_{11}=E_{11}-i\frac{\sqrt{\beta_2}}{\sqrt{\beta_1}}E_{21}
&E_{11}\ast E_{12}=i\sqrt{\beta_1}\sqrt{\beta_2}E_{11}+\beta_1E_{12}+\beta_2E_{21}-i\sqrt{\beta_1}\sqrt{\beta_2}E_{22}\\
E_{12}\ast E_{21}=E_{11}-i\frac{\sqrt{\beta_2}}{\sqrt{\beta_1}}E_{21}
&E_{12}\ast E_{22}=i\sqrt{\beta_1}\sqrt{\beta_2}E_{11}+\beta_1E_{12}+\beta_2E_{21}-i\sqrt{\beta_1}\sqrt{\beta_2}E_{22}\\
E_{21}\ast E_{11}=\frac{E_{21}}{\beta_1}
&E_{21}\ast E_{12}=-i\frac{\sqrt{\beta_2}}{\sqrt{\beta_1}}E_{21}+E_{22}\\
E_{22}\ast E_{21}=\frac{E_{11}}{\beta_1}
&E_{22}\ast E_{22}=-i\frac{\sqrt{\beta_2}}{\sqrt{\beta_1}}E_{21}+E_{22}
\end{array}
\end{array}\right.$$
\textbf{Algebra 2}    
$$\left\{\begin{array}{c}
\begin{array}{ll}  
E_{11}\ast E_{11}=E_{11}+i\frac{\sqrt{\beta_2}}{\sqrt{\beta_1}}E_{21}
&E_{11}\ast E_{12}=-i\sqrt{\beta_1}\sqrt{\beta_1}E_{11}+\beta_1E_{12}+\beta_2E_{21}+i\sqrt{\beta_1}\sqrt{\beta_1}E_{22}\\
E_{12}\ast E_{21}=E_{11}+i\frac{\sqrt{\beta_1}}{\sqrt{\beta_1}}E_{21}
&E_{12}\ast E_{22}=-i\sqrt{\beta_1}\sqrt{\beta_2}E_{11}+\beta_2E_{12}+\beta_2E_{21}+i\sqrt{\beta_1}\sqrt{\beta_2}E_{22}\\
E_{21}\ast E_{11}=\frac{E_{21}}{\beta_1}\
&E_{21}\ast E_{12}=-i\frac{\sqrt{\beta_2}}{\sqrt{\beta_1}}E_{21}+E_{22}\\
E_{22}\ast E_{21}=\frac{E_{21}}{\beta_1}
&E_{22}\ast E_{22}=-i\frac{\sqrt{\beta_2}}{\sqrt{\beta_1}}E_{21}+E_{22}
\end{array}
\end{array}\right.$$
\textbf{Algebra 3}    
$$\left\{\begin{array}{c} 
\begin{array}{ll}   
E_{11}\ast E_{11}=E_{11}-\lambda_1E_{21}
&E_{11}\ast E_{12}=-\frac{\beta_2}{\lambda_1}E_{11}-\frac{\beta_2}{{\gamma_2}^2}E_{12}+\beta_2E_{21}+\frac{\beta_2}{\lambda_1}E_{22}\\
E_{12}\ast E_{21}=E_{11}-\lambda_1E_{21}
&E_{12}\ast E_{22}=-\frac{\beta_2}{\lambda_1}E_{11}-\frac{\beta_2}{{\gamma_2}^2}E_{12}+\beta_2E_{21}+\frac{\beta_2}{\lambda_1}E_{22}\\
E_{21}\ast E_{11}=-\frac{\lambda_1^2}{\beta_2}E_{21}
&E_{21}\ast E_{12}=\lambda_1E_{21}+E_{22}\\
E_{22}\ast E_{21}=-\frac{\lambda_1^2}{\beta_2}E_{21}
&E_{22}\ast E_{22}=\lambda_1E_{21}+E_{22}.
\end{array}  
\end{array}\right.$$
\textbf{Algebra 4}  
$$\left\{\begin{array}{c} 
\begin{array}{ll} 
E_{11}\ast E_{11}=E_{11}-\lambda_1E_{21}
&E_{11}\ast E_{12}=\beta_1\lambda_1E_{11}+\beta_1E_{12}-\beta_1\lambda_1^2E_{21}-\beta_1E_{22}\\
E_{12}\ast E_{21}=E_{11}-\lambda_1E_{21}
&E_{12}\ast E_{22}=\beta_1\lambda_1E_{11}+\beta_1E_{12}-\beta_1\lambda_1^2E_{21}-\beta_1E_{22}\\
E_{21}\ast E_{11}=\frac{E_{21}}{\beta_1}
&E_{21}\ast E_{12}=\lambda_1E_{21}+E_{22}\\
E_{22}\ast E_{21}=\frac{E_{21}}{\beta_1}
&E_{22}\ast E_{22}=\lambda_1E_{21}+E_{22}
\end{array}
\end{array}\right.$$
\textbf{Algebra 5}  
$$\left\{\begin{array}{c} 
\begin{array}{ll} 
E_{11}\ast E_{11}=E_{11}+\beta_3\gamma_1E_{21}
&E_{11}\ast E_{12}=-\beta_3E_{11}+\frac{E_{12}}{\gamma_1}-\beta_3\gamma_1E_{21}+\beta_3E_{22}\\
E_{12}\ast E_{21}=E_{11}+\beta_3\gamma_1E_{21}
&E_{12}\ast E_{22}=-\beta_3E_{11}+\frac{E_{12}}{\gamma_1}-\beta_3\gamma_1E_{21}+\beta_3E_{22}\\
E_{21}\ast E_{11}=\gamma_1E_{21}
&E_{21}\ast E_{12}=-\beta_3\gamma_1E_{21}+E_{22}\\
E_{22}\ast E_{21}=\gamma_1E_{21}
&E_{22}\ast E_{22}=-\beta_3\gamma_1E_{21}+E_{22}
\end{array}
\end{array}\right.$$
\textbf{Algebra 6}  
$$\left\{\begin{array}{c} 
\begin{array}{ll} 
E_{11}\ast E_{11}=i\sqrt{\beta_2}\sqrt{\gamma_1}E_{21}+E_{22}
&E_{11}\ast E_{12}=\beta_2E_{21}\\
E_{12}\ast E_{21}=i\sqrt{\beta_2}\sqrt{\gamma_1}E_{21}+E_{22}
&E_{12}\ast E_{22}=\beta_2E_{21}\\
E_{21}\ast E_{11}=i\frac{\sqrt{\gamma_1}}{\sqrt{\beta_2}}E_{11}+\frac{E_{12}}{\beta_2}+\gamma_1E_{21}-i\frac{\sqrt{\gamma_1}}{\sqrt{\beta_2}}E_{22}
&E_{21}\ast E_{12}=E_{11}-i\sqrt{\beta_2}\sqrt{\gamma_1}E_{21}\\
E_{22}\ast E_{21}=i\frac{\sqrt{\gamma_1}}{\sqrt{\beta_2}}E_{11}+\frac{E_{12}}{\beta_2}+\gamma_1E_{21}-i\frac{\sqrt{\gamma_1}}{\sqrt{\beta_2}}E_{22}
&E_{22}\ast E_{22}=E_{11}-i\sqrt{\beta_2}\sqrt{\gamma_1}E_{21}
\end{array}
\end{array}\right.$$
\textbf{Algebra 7}  
$$\left\{\begin{array}{c} 
\begin{array}{ll} 
E_{11}\ast E_{11}=\frac{\beta_4}{\beta_2}E_{12}+E_{22}
&E_{11}\ast E_{12}=\beta_4E_{11}-\frac{\beta^2_4}{\beta_2}E_{12}+\beta_2E_{21}-\beta_4E_{22}\\
E_{12}\ast E_{21}=\frac{\beta_4}{\beta_2}E_{12}+E_{22}
&E_{12}\ast E_{22}=\beta_4E_{11}-\frac{\beta^2_4}{\beta_2}E_{12}+\beta_2E_{21}-\beta_4E_{22}\\
E_{21}\ast E_{11}=\frac{E_{12}}{\beta_2}
&E_{21}\ast E_{12}=E_{11}-\frac{\beta_4}{\beta_2}E_{12}\\
E_{22}\ast E_{21}=\frac{E_{12}}{\beta_2}
&E_{22}\ast E_{22}=E_{11}-\frac{\beta_4}{\beta_2}E_{12}
\end{array}
\end{array}\right.$$
\textbf{Algebra 8}  
$$\left\{\begin{array}{c} 
\begin{array}{ll} 
E_{11}\ast E_{11}=E_{11}+\gamma_2E_{12}
&E_{11}\ast E_{12}=\frac{E_{12}}{\gamma_1}\\
E_{12}\ast E_{21}=E_{11}+\gamma_2E_{12}
&E_{12}\ast E_{22}=\frac{E_{12}}{\gamma_1}\\
E_{21}\ast E_{11}=-\gamma_2E_{11}-\frac{\gamma^2_2}{\gamma_1}E_{12}+\gamma_1E_{21}+\gamma_2E_{22}
&E_{21}\ast E_{12}=-\frac{\beta_2}{\beta_1}E_{12}+E_{22}\\
E_{22}\ast E_{21}=-\gamma_2E_{11}-\frac{\gamma^2_2}{\gamma_1}E_{12}+\gamma_1E_{21}+\gamma_2E_{22}
&E_{22}\ast E_{22}=-\frac{\beta_2}{\beta_1}E_{12}+E_{22}
\end{array}
\end{array}\right.$$

\textbf{Algebra 9}  
$$\left\{\begin{array}{c} 
\begin{array}{ll} 
E_{11}\ast E_{11}=-\gamma_2E_{21}+E_{22}
&E_{11}\ast E_{12}=\frac{E_{21}}{\gamma_4}\\
E_{12}\ast E_{21}=-\gamma_2E_{21}+E_{22}
&E_{12}\ast E_{22}=\frac{E_{21}}{\gamma_4}\\
E_{21}\ast E_{11}=-\gamma_2E_{11}+\gamma_4E_{12}-\frac{\gamma^2_2}{\gamma_4}E_{21}+\gamma_2E_{22}
&E_{21}\ast E_{12}=E_{11}+\frac{\gamma_2}{\gamma_4}E_{21}\\
E_{22}\ast E_{21}=-\gamma_2E_{11}+\gamma_4E_{12}-\frac{\gamma^2_2}{\gamma_4}E_{21}+\gamma_2E_{22}
&E_{22}\ast E_{22}=E_{11}+\frac{\gamma_2}{\gamma_4}E_{21}
\end{array}
\end{array}\right.$$
\end{small}
\section{Algebraic varieties of Hom-associative algebras and Classification} 
In this section, we deal with algebraic varieties of Hom-associative algebras with a fixed dimension. A Hom-associative algebra is identified with its structure constants with respect to a fixed basis. Their set corresponds to an algebraic variety where the ideal is generated by polynomials corresponding to the Hom-associativity condition.
\subsection{Algebraic varieties $\mathcal{HA}ss_n$}
Let $A$ be a $n$-dimensional $\K$-linear space and $\left\{e_1, \cdots,e_n\right\}$ be a basis of $A$. A Hom-algebra structure on $A$ with product 
$\mu$ determined by $n^3$ structure constants $\mathcal{C}_{ij}^k$ where  
$\mu(e_i, e_j)=\sum_{k=1}^n\mathcal{C}_{ij}^ke_k$ and a structure map $\alpha$ determined  by $n^2$ structure constants $a_{ji}$, where 
$\alpha(e_i)=\sum_{j=1}^na_{ji}e_j$.\\
%
If we require this algebra structure to be Hom-associative, then this limits the set 
of structure constants ($\mathcal{C}_{ij}^k, a_{ij})$ to a cubic sub-variety of the affine algebraic variety $\mathbb{K}^{n^3+n^2}$ defined by the following polynomial equations system : 
\begin{equation}\label{s1}
\left\{\begin{array}{c}  
\sum_{l=1}^n\sum_{m=1}^na_{il}\mathcal{C}_{jk}^m\mathcal{C}^s_{lm}-a_{mk}\mathcal{C}_{ij}^l\mathcal{C}_{lm}^s=0,\quad i,j,k,s=1,\cdots, n.\\
\sum_{p=1}^na_{sp}\mathcal{C}_{ij}^p-\sum_{p=1}^n\sum_{q=1}^na_{pi}a_{qj}\mathcal{C}_{pq}^s=0,\quad i,j, s=1,\cdots,n.
\end{array}\right.
\end{equation}
Moreover if $\mu$ is commutative, 
we have $\mathcal{C}_{ij}^k=\mathcal{C}_{ji}^k\quad i, j, k=1,\cdots,n.$ 

The first set of equation correspond to the Hom-associative condition 
$\mu(\alpha(e_i),\mu(e_j,e_k))=\mu(\mu(e_i,e_j),\alpha(e_k))$ and the second set to multiplicativity condition 
$\alpha\circ\mu(e_i,e_j)=\mu(\alpha(e_i), \alpha(e_j)).$
We denote by $\mathcal{HA}ss_n$ the set of all $n$-dimensional multiplicative Hom-associative algebras.  

\noindent Assume that $e_1=u$, the unit,  in the basis $\mathcal{B}$. It turns out that in addition to the system $(\ref{s1})$, we have the following condition with respect to  unitality : 
$u_1.e_i=e_i.u_1=\alpha(e_i)\Rightarrow \sum_{k=1}^n\mathcal{C}_{1i}^ke_k=\sum_{k=1}^n\mathcal{C}_{i1}^ke_k=\sum_{k=1}^na_{ki}e_k$,  
that is 
\begin{equation}
\mathcal{C}_{i1}^k=\mathcal{C}_{1i}^k=a_{ki}\quad \forall i, k. 
\end{equation} 
We denote by  $\mathcal{UHA}ss_n$ the  algebraic varieties of $n$-dimensional unital Hom-associative algebras.
\subsection{Action of linear group on the algebraic varieties  $\mathcal{HA}ss_n$ }
The group $GL_n(\mathbb{K})$ acts on the algebraic varieties of  Hom-structures by the so-called transport of structure action defined as follows. Let $A=(A,\mu,\alpha )$ be a $n$-dimensional  Hom-associative algebra defined by multiplication $\mu$ and a linear map $\alpha$. Given $f\in GL_n(\mathbb{K})$, the action $f\cdot A$ transports the structure, 
$$\begin{array}{lll}
\Theta : & GL_n(\mathbb{K})\times \mathcal{HA}ss_n&\longrightarrow\mathcal{HA}ss_n \\
                           \ &      (f, (A,\mu, \alpha))&\longmapsto(A,f^{-1}\circ\mu\circ ( f\otimes f), f\circ\alpha\circ f^{-1})             
\end{array}$$
defined for $x,y \in A$,  by
\begin{equation}
f\cdot\mu(x, y)=f^{-1}\mu(f(x), f(y)) , 
\quad 
f\cdot\alpha(x)=f^{-1}\alpha(f(x)). 
\end{equation}
The conjugate class is given by  $\Theta(f, (A,\mu, \alpha))=(A,f^{-1}\circ\mu\circ ( f\otimes f), f\circ\alpha\circ f^{-1}) )$ for $f\in GL_n(\mathbb{K}).$

The orbit of a  Hom-associative algebra $A$ of  $\mathcal{HA}ss_n$ is given by 
$$
\vartheta(A)=\left\{A'=f\cdot A, \, f\in GL_n(\mathbb{K})\right\}.
$$ The orbits are in \textbf{1-1 correspondence} with the isomorphism classes of $n$-dimensional Hom-associative algebras.

The stabilizer is 
$$Stab((A,\mu, \alpha))=\left\{f\in GL_n(\mathbb{K})|(f^{-1}\circ\mu\circ( f\otimes f)=\mu \text{ and  }Êf\circ \alpha =\alpha \circ f\right\}.$$
We characterize in terms of structure constants the fact  that two Hom-associative algebras are in the same  orbit (or isomorphic).
Let $(A, \mu_1, \alpha_1)$ and $(A, \mu_2, \alpha_2)$ be two $n$-dimensional Hom-associative algebras. They are isomorphic if there exists
$\varphi\in GL_n(\mathbb{K})$ such that
\begin{equation} \label{c.5}
\varphi\circ\mu_1=\mu_2(\varphi\otimes\varphi)\quad \text{and} \quad \varphi\circ\alpha_1=\alpha_2\circ\varphi.  
 \end{equation}
 \begin{remark}
 Conditions (\ref{c.5}) are equivalent to
$\mu_1=\varphi^{-1}\circ\mu_2\circ\varphi\otimes\varphi$ and $\alpha_1=\varphi^{-1}\circ\alpha_2\circ\varphi$. 
\end{remark}
\noindent We  set with respect to a basis $\left\{e_i\right\}_{i=1,\cdots,n}$:

$\varphi(e_i)=\sum_{p=1}^na_{pi}e_p,\quad\alpha_1(e_i)=\sum_{j=1}^n\alpha_{ji}e_j,\quad \alpha_2(e_i)=\sum_{j=1}^n\beta_{ji}e_j \quad i=1,\cdots, n$ 

$\mu_1(e_i, e_j)=\sum_{k=1}^n\mathcal{C}_{ij}^ke_k, \quad \mu_2(e_i, e_j)=\sum_{k=1}^n\mathcal{D}_{ij}^ke_k \quad i, j=1,\cdots, n.$

Conditions (\ref{c.5}) translate to the following
system :\\
$
\sum_{k=1}^n\mathcal{C}_{ij}^ka_{qk}-\sum_{k=1}^n\sum_{p=1}^n\mathcal{D}_{pk}^qa_{pi}a_{kj}=0,\, 
\text{and},\,\sum_{k=1}^n\alpha_{ji}a_{qk}-\sum_{k=1}^na_{ki}\beta_{qk}=0,\quad i,j, q=1,\cdots ,n.
$
\subsection{Algebraic Variety $\mathcal{HA}ss_2$}\ \\
A Hom-associative algebra is identified to its structure constants $(C_{i,j}^k)$ and $(a_{ij})$ with respect to a given basis. They satisfy the first family of system  \eqref{s1}, for which the solutions belong to the algebraic variety defined by the  following Groebner basis.
\begin{small}
\begin{align*}
\begin{array}{c} 
\begin{array}{ll} 
\langle a_{21}c_{11}^1c_{12}^1-a_{21}c_{11}^1c_{21}^1-a_{11}c_{11}^2c_{12}^1+a_{11}c_{11}^2c_{21}^1,
a_{21}c_{11}^1c_{12}^2-a_{21}c_{11}^1c_{21}^2-a_{11}c_{11}^2c_{12}^2+a_{11}c_{11}^2c_{21}^2,\\
a_{12}(c_{11}^1)^2+a_{11}c_{11}^1c_{12}^1-a_{22}c_{11}^1c_{12}^1+a_{11}c_{12}^1c_{12}^2-a_{12}c_{11}^2c_{21}^1+a_{21}c_{12}^1 c_{21}^1-a_{22}c_{11}^2
c_{22}^1+a_{21}c_{12}^2c_{22}^1,\\
a_{12}(c_{11}^2)^2+a_{12}c_{11}^2c_{12}^1-a_{11}c_{11}^1c_{21}^1+a_{22}c_{11}^1c_{21}^1-a_{21}c_{12}^1c_{21}^1-a_{11}c_{21}^1c_{21}^2+
a_{22}c_{11}^2c_{22}^1-a_{21}c_{21}^2c_{22}^1,\\
-a_{11}c_{11}^1c_{12}^1-a_{21}(c_{12}^1)^2+a_{11}c_{11}^1c_{21}^1-a_{11}c_{12}^2c_{21}^1+a_{21}(c_{21}^1)^2+a_{11}c_{12}^1c_{21}^2-a_{21}c_{12}^2
c_{22}^1+a_{21}c_{21}^2c_{22}^1,\\
a_{12}c_{11}^1c_{12}^1+a_{12}c_{12}^1c_{12}^2-a_{12}c_{11}^1c_{21}^1-a_{12}c_{21}^1c_{21}^2+a_{22}c_{12}^2c_{22}^1-a_{22}c_{21}^2c_{22}^1,
\\
-a_{22}c_{12}^1c_{21}^1+a_{12}c_{12}^1c_{22}^2+a_{22}c_{21}^1c_{22}^1-a_{12}c_{21}^1c_{22}^2,
a_{22}c_{12}^2c_{22}^1-a_{12}c_{12}^2c_{22}^2-a_{22}c_{12}^2c_{22}^2+a_{22}c_{21}^2c_{22}^2,\\
a_{12}c_{11}^1c_{11}^2+a_{11}c_{11}^2c_{12}^1-a_{22}c_{11}^1c_{12}^2+a_{11}(c_{12}^2)^2-a_{12}c_{11}^2c_{21}^2+a_{21}
c_{12}^1c_{21}^2-a_{22}c_{11}^2c_{22}^2+a_{21}c_{12}^2c_{22}^2,\\
a_{12}c_{11}^1c_{11}^2+a_{12}c_{11}^2c_{12}^2-a_{11}c_{11}^2c_{21}^1-a_{21} c_{12}^2 c_{21}^1+a_{22}c_{11}^1
c_{21}^2-a_{11}(c_{21}^2)^2+a_{22}c_{11}^2c_{22}^2-a_{21}c_{21}^2c_{22}^2,\\
-a_{11}c_{11}^2c_{12}^1-a_{21}c_{12}^1c_{12}^2+a_{11}c_{11}^2c_{21}^1+a_{21}c_{21}^1c_{21}^2-a_{21}c_{12}^2c_{22}^2+a_{21}c_{21}^2 c_{22}^2,\\
a_{12}c_{11}^2c_{12}^1+a_{12}(c_{12}^2)^2-a_{12}c_{11}^2c_{21}^1-a_{22}c_{12}^2c_{21}^1+a_{22}c_{12}^1
c_{21}^2-a_{12}(c_{21}^2)^2+a_{22}c_{12}^2c_{22}^2-a_{22}c_{21}^2 c_{22}^2,\\
a_{12}c_{11}^1c_{21}^1+a_{22}(c_{21}^1)^2+a_{12}c_{12}^1c_{21}^2-a_{11}c_{11}^1c_{22}^1-a_{21}c_{12}^1c_{22}^1+a_{22} 
c_{21}^2c_{22}^1-a_{11}c_{21}^1c_{22}^2-a_{21}c_{22}^1c_{22}^2,\\
-a_{12}c_{11}^1c_{12}^1-a_{22}(c_{12}^1)^2-a_{12}c_{12}^2c_{21}^1+a_{11}c_{11}^1c_{22}^1-a_{22}c_{12}^2c_{22}^1+a_{21}c_{21}^1
c_{22}^1+a_{11}c_{12}^1c_{22}^2+a_{21}c_{22}^1c_{22}^2,\\
a_{12}c_{11}^2c_{21}^1+a_{12}c_{12}^2c_{21}^2+a_{22}c_{21}^1c_{21}^2-a_{11}c_{11}^2c_{22}^1-a_{21}c_{12}^2
c_{22}^1-a_{11}c_{21}^2 c_{22}^2+a_{22}c_{21}^2c_{22}^2-a_{21}(c_{22}^2)^2,\\
-a_{12}c_{11}^2c_{12}^1-a_{22}c_{12}^1c_{12}^2-a_{12}c_{12}^2c_{21}^2+a_{11}c_{11}^2c_{22}^1+a_{21}c_{21}^2c_{22}^1+
a_{11}c_{12}^2c_{22}^2-a_{22}c_{12}^2c_{22}^2+a_{21}(c_{22}^2)^2\rangle
\end{array}
\end{array}
\end{align*}
\end{small}
If the Hom-associative algebra is multiplicative, it should satisfy further the second family of  \eqref{s1}, that is, it  belongs to the intersection with the the algebraic variety defined by the following Groebner basis.
\begin{small}
\begin{align*}
\begin{array}{c} 
\begin{array}{ll} 
\langle a_{11}c_{11}^1-a_{11}^2 c_{11}^1+a_{12}c_{11}^2-a_{11}a_{21}c_{12}^1-a_{11}a_{21}c_{21}^1-a_{21}^2c_{22}^1,\\
a_{11}a_{12}c_{11}^1+a_{11}c_{12}^1-a_{11} a_{2,2} c_{12}^1+a_{12}c_{12}^2-a_{12}a_{21}c_{21}^1-a_{21}a_{22}c_{22}^1\\
a_{11}a_{12}c_{11}^1-a_{12}a_{21}c_{12}^1+a^{11}c_{21}^1-a_{11}a_{22}c_{21}^1+a_{12}c_{21}^2-a_{21}a_{22}c_{22}^1,\\
a_{12}^2c_{11}^1-a_{12}a_{22}c_{12}^1-a_{12}a_{22}c_{21}^1+a_{11}c_{22}^1-a_{22}^2c_{22}^1+a_{12}c_{22}^2,\\
a_{21}c_{11}^1-a_{11}^2c_{11}^2+a_{22}c_{11}^2-a_{11}a_{21}c_{12}^2-a_{11}a_{21}c_{21}^2-a_{21}^2 c_{22}^2,\\
a_{11}a_{12}c_{11}^2+a_{21}c_{12}^1+a_{22}c_{12}^2-a_{11}a_{22}c_{12}^2-a_{12}a_{21}c_{21}^2-a_{21}a_{22}c_{22}^2,\\
a_{11}a_{12}c_{11}^2-a_{12}a_{21}c_{12}^2+a_{21}c_{21}^1+a_{22}c_{21}^2-a_{11}a_{22}c_{21}^2-a_{21}a_{22}c_{22}^2,\\
a_{12}^2c_{11}^2-a_{12}a_{22}c_{12}^2-a_{12}a_{22}c_{21}^2+a_{21}c_{22}^1+a_{22}c_{22}^2-a_{22}^2c_{22}^2\rangle.
\end{array}
\end{array}
\end{align*}
\end{small}

Describing the algebraic varieties by solving such systems lead to  the  $2$-dimensional and $3$-dimensional Hom-associative algebras classifications.
\subsection{Classification of $2$-dimensional Hom-associative algebras} \ \\
We have to consider two classes of morphisms which are given by Jordan forms, namely they are represented by the matrices
$\left(\begin{array}{ccc}
a&0\\
0&b
\end{array}
\right)$
 and 
 $\left(\begin{array}{ccc}
a&1\\
0&a
\end{array}
\right)$. 
We check whether the previous are isomorphic. We  provide all $2$-dimensional Hom-associative algebras, corresponding to solutions of the system \eqref{s1}. To this end, we use a computer algebra system.

\begin{lemma}\label{2l}
Let $\alpha$ be a diagonal morphism such that $\alpha(e_1)=pe_1, \, \alpha(e_2)=qe_2, \, p\neq q$ with respect to basis 
$\left\{e_1, e_2\right\}$. Then any $\varphi : A\rightarrow A$ such that $\varphi\circ \alpha=\alpha\circ \varphi$ is of the form 
$\varphi(e_1)=\lambda e_1$ and $\varphi(e_2)=\rho e_2$ with respect to the same basis.
\end{lemma}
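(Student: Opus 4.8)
The plan is to translate the commutation condition $\varphi\circ\alpha=\alpha\circ\varphi$ into a system of scalar equations by writing $\varphi$ as a generic endomorphism in the given basis, and then to exploit the hypothesis $p\neq q$ to annihilate the off-diagonal entries. This is the familiar fact that the centralizer of a diagonal operator with pairwise distinct eigenvalues consists precisely of the operators that are diagonal in the same eigenbasis; in the $2$-dimensional case it is entirely explicit, and nothing from the Hom-associative structure is needed.

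First I would set $\varphi(e_1)=\lambda e_1+\nu e_2$ and $\varphi(e_2)=\eta e_1+\rho e_2$ for scalars $\lambda,\nu,\eta,\rho\in\mathbb{K}$, this being the most general linear map on the $2$-dimensional space. Then I would evaluate both sides of the commutation relation on the two basis vectors. Using $\alpha(e_1)=pe_1$ and $\alpha(e_2)=qe_2$, one obtains
\[
\varphi\circ\alpha(e_1)=p\lambda e_1+p\nu e_2,\qquad \alpha\circ\varphi(e_1)=p\lambda e_1+q\nu e_2,
\]
and likewise
\[
\varphi\circ\alpha(e_2)=q\eta e_1+q\rho e_2,\qquad \alpha\circ\varphi(e_2)=p\eta e_1+q\rho e_2.
\]

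Equating coefficients, the diagonal terms in $e_1$ and $e_2$ match automatically, whereas the two off-diagonal comparisons give $(p-q)\nu=0$ and $(q-p)\eta=0$. Since $p\neq q$ by hypothesis, the scalars $p-q$ and $q-p$ are both nonzero, which forces $\nu=\eta=0$. Consequently $\varphi(e_1)=\lambda e_1$ and $\varphi(e_2)=\rho e_2$, exactly the claimed diagonal form. I do not anticipate any genuine obstacle: the sole place where the hypothesis enters is the invertibility of $p-q$, and the remainder is routine linear algebra. The only care needed is to keep the matrix conventions consistent when reading off coefficients, but the explicit evaluation on basis vectors above sidesteps any ambiguity.
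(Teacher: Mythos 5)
Your proof is correct and takes essentially the same route as the paper: both write $\varphi$ as a generic linear map in the basis $\left\{e_1,e_2\right\}$, evaluate $\varphi\circ\alpha=\alpha\circ\varphi$ on the basis vectors, and use $p\neq q$ to force the off-diagonal coefficients to vanish. The paper's version briefly phrases the computation with a second diagonal map $\alpha'$ with eigenvalues $p',q'$ before specializing to $p'=p$, $q'=q$, but the underlying argument is identical to yours, and yours is if anything the cleaner write-up.
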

\begin{proof}
Let $\varphi(e_1)=\lambda_1e_1+\lambda_2e_2$ and $\varphi(e_2)=\rho_1e_1+\rho_2e_2$. On the one hand,
$\varphi\circ\alpha(e_1)=\lambda_1pe_1+\lambda_2pe_2$ and $\alpha'\circ\varphi(e_1)=\lambda_1p'e_1+\lambda_2q'e_2$. So we have
$\lambda_1p=\lambda_1p'$ and $\lambda_2p=\lambda_2q'$. On the other hand, $\varphi\circ\alpha(e_2)=q\rho_1e_1+q\rho_2e_2$ and 
$\alpha'\circ\varphi(e_2)=\rho_1p'e_1+\rho_2q'e_2$. We have $\rho_1q=\rho_1p'$ and $\rho_2q=\rho_2q'$.\\ Then we have 
$ \lambda_1(p-p')=0, \quad \lambda_2(p-q')=0, \quad \rho_1(q-p')=0, \quad \rho_2(q-q')=0.$
If $p=p'$ and $q=q'$, we have $\lambda_2(p-q')=0$ and $\rho_2(q-q')=0.$\\
If $p\neq q$, so $\lambda_2=\rho_1=0$. Hence the lemma with $\lambda=\lambda_1$ and $\rho=\rho_2$.
\end{proof}  
	\begin{theorem}
Every $2$-dimensional multiplicative Hom-associative algebra is isomorphic to one of the following pairwise non-isomorphic Hom-associative algebra $(A, \ast,\alpha)$, where $\ast$ is the multiplication and $\alpha$ the structure map. We set $\left\{e_1, e_2\right\}$ to be a basis of 
$\mathbb{K}^2$. 
\end{theorem}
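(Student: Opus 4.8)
The plan is to carry out an exhaustive but systematic enumeration, exploiting Proposition \ref{p1} to reduce the structure map $\alpha$ to a Jordan canonical form, and then solving the Hom-associativity and multiplicativity equations from system \eqref{s1} in each case. By Proposition \ref{p1} and the remark following it, every $2$-dimensional multiplicative Hom-associative algebra is isomorphic to one whose map $\alpha$ is conjugate to one of the two $2\times 2$ Jordan forms: the diagonal form $\operatorname{diag}(a,b)$ and the single Jordan block $\bigl(\begin{smallmatrix} a & 1 \\ 0 & a \end{smallmatrix}\bigr)$. So I would split the argument into these two families and treat each separately.

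First I would fix the basis $\{e_1,e_2\}$ adapted to the Jordan form and write $\alpha(e_1)=ae_1$ (plus the appropriate off-diagonal contribution), $\alpha(e_2)=be_2$, and substitute the resulting values of the $a_{ij}$ into the two polynomial systems displayed above (equivalently into \eqref{s1}). This reduces each case to a system of polynomial equations in the four structure constants $\mathcal{C}_{ij}^k$ alone. I would then solve these — in practice with a computer algebra system, as the theorem statement indicates — enumerating all solution branches. For the diagonal case one must further distinguish the subcase $a\neq b$ from $a=b$ (and within the latter, $a=1$ recovering the genuinely associative algebras versus $a\neq 1$); Lemma \ref{2l} tells us that in the $a\neq b$ subcase the only isomorphisms commuting with $\alpha$ are themselves diagonal, which tightly constrains the residual freedom to normalize the $\mathcal{C}_{ij}^k$.

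The second and more delicate task is to remove redundancy: having listed the solution branches, I must check which ones are pairwise non-isomorphic. Two candidate algebras $(A,\mu_1,\alpha_1)$ and $(A,\mu_2,\alpha_2)$ are isomorphic precisely when the structure-constant conditions \eqref{c.5} admit a solution $\varphi\in GL_2(\mathbb{K})$. Since an isomorphism must intertwine $\alpha_1$ and $\alpha_2$, they must share the same Jordan type, so I may compare only algebras within the same family. Within each family I would use the restricted form of the admissible $\varphi$ — diagonal when the eigenvalues are distinct (Lemma \ref{2l}), and the centralizer of a Jordan block otherwise — to rescale the surviving nonzero structure constants to canonical values $0$ or $1$, thereby collapsing each orbit to a single representative and verifying that distinct representatives cannot be matched.

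The main obstacle I anticipate is precisely this normalization and non-isomorphism bookkeeping rather than the raw solving of \eqref{s1}: the polynomial systems are large (as the displayed Gr\"obner bases make clear), so many spurious or coincident branches will appear, and one must be careful with degenerate sub-loci where parameters collide ($a=b$, $a=1$, or structure constants vanishing) since these can cause orbits that look distinct for generic parameters to merge, or conversely split. Organizing the casework so that every branch is accounted for exactly once, and confirming minimality of the final list, is where the care is needed; the algebraic manipulations themselves, while tedious, are mechanical once the Jordan reduction and the admissible-$\varphi$ constraints are in hand.
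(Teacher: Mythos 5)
Your proposal follows essentially the same route as the paper: reduce $\alpha$ to one of the two $2\times 2$ Jordan forms via Proposition \ref{p1}, solve the resulting polynomial system \eqref{s1} for the structure constants with a computer algebra system, and eliminate redundant representatives using the isomorphism conditions \eqref{c.5} together with Lemma \ref{2l} (the paper's own proof is exactly this, stated as a straightforward calculation from Definition \ref{d2} and Lemma \ref{2l}). If anything, your explicit attention to the degenerate sub-loci ($a=b$, $a=1$, vanishing constants) and to the centralizer of the Jordan block is more careful bookkeeping than the paper records, but it is the same argument.
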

\begin{itemize}
\item [$A^2_1$] : $e_1\ast e_1=-e_1,\quad e_1\ast e_2=e_2,\quad e_2\ast e_1=e_2,\quad e_2\ast e_2=e_1,\quad\alpha(e_1)=e_1,\quad\alpha(e_2)=-e_2;$
	\item [$A^2_2$] : $e_1\ast e_1=e_1,\quad e_1\ast e_2=0,\quad e_2\ast e_1=0,\quad e_2\ast e_2=e_2,\quad \alpha(e_1)=e_1,\quad \alpha(e_2)=0;$ 
	\item [$A^2_3$] : $e_1\ast e_1=e_1,\quad e_1\ast e_2=0,\quad e_2\ast e_1=0,\quad e_2\ast e_2=0,\quad \alpha(e_1)=e_1,\quad \alpha(e_2)=0;$
\item [$A^2_4$] : $e_1\ast e_1=e_1,\quad e_1\ast e_2=e_2,\quad e_2\ast e_1=e_2,\quad e_2\ast e_2=0,\quad \alpha(e_1)=e_1,\quad \alpha(e_2)=e_2;$
\item [$A^2_5$] : $e_1\ast e_1=e_1,\quad e_1\ast e_2=0,\quad e_2\ast e_1=0,\quad e_2\ast e_2=0,\quad \alpha(e_1)=0,\quad \alpha(e_2)=ke_2;$
\item [$A^2_6$] : $e_1\ast e_1=e_2,\quad e_1\ast e_2=0,\quad e_2\ast e_1=0,\quad e_2\ast e_2=0,\quad \alpha(e_1)=e_1,\quad \alpha(e_2)=e_2;$
\item [$A^2_7$] : $e_1\ast e_1=0,\quad e_1\ast e_2=ae_1,\quad e_2\ast e_1=be_1,\quad e_2\ast e_2=ce_1,\quad \alpha(e_1)=0,\quad\alpha(e_2)=e_1,
$ where $a, b, c, k\in \mathbb{C};$
\item [$A^2_8$] : $e_1\ast e_1=0,\quad e_1\ast e_2=e_1,\quad e_2\ast e_1=0,\quad e_2\ast e_2=e_1+e_2,\quad \alpha(e_1)=e_1,\quad \alpha(e_2)=e_1+e2_;$
\item [$A^2_9$] : $e_1\ast e_1=0,\quad e_1\ast e_2=0,\quad e_2\ast e_1=e_1,\quad e_2\ast e_2=e_1+e_2,\quad \alpha(e_1)=e_1,\quad \alpha(e_2)=e_1+e_2.$
\end{itemize}
\begin{proof}
The proof follows from straightforward calculation using Definition \ref{d2} and Lemma \ref{2l}.
\end{proof}
\begin{proposition}
The Hom-associative algebras $A_1, A_4, A_6, A_8, A_9$ are of associative type.
\end{proposition}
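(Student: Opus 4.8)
The plan is to reduce the whole statement to the Corollary that follows Definition~\ref{Defs1}, which asserts that a \emph{multiplicative} Hom-associative algebra whose twist map $\alpha$ is invertible is automatically of associative type, with compatible associative algebra $(A,\mu'=\alpha^{-1}\circ\mu)$. Since every algebra produced by the classification theorem is multiplicative, it therefore suffices to check that the structure map $\alpha$ is invertible in precisely the five cases $A_1, A_4, A_6, A_8, A_9$. No genuine associativity computation is needed: once invertibility is established, the Corollary supplies the compatible associative structure for free.

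First I would write down the matrix of $\alpha$ in the basis $\{e_1,e_2\}$ for each of the five algebras and read off its determinant. For $A_1$ the matrix is $\diag(1,-1)$; for $A_4$ and $A_6$ it is the identity; and for $A_8$ and $A_9$ it is the unipotent upper-triangular matrix $\left(\begin{array}{cc}1&1\\0&1\end{array}\right)$ coming from $\alpha(e_1)=e_1,\ \alpha(e_2)=e_1+e_2$. In each case the determinant is nonzero ($-1$ in the first case, $1$ in the remaining four), so $\alpha$ is invertible, and the Corollary immediately gives that each of these algebras is of associative type.

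To make the conclusion explicit I would then exhibit the untwist $\mu'=\alpha^{-1}\circ\mu$ in each case, which is routine since $\alpha^{-1}$ is again diagonal or unipotent: for $A_4$ and $A_6$ one has $\alpha=\id$, so $\mu'=\mu$ and the algebras are literally associative (they are the dual numbers $\K[x]/(x^2)$ and the nilpotent algebra $\K[x]/(x^3)$ respectively), while for $A_1,A_8,A_9$ one simply evaluates $\alpha^{-1}$ on each basis product. The main point to flag is not an obstacle but a boundary remark: this route is unavailable for $A_2,A_3,A_5,A_7$ precisely because their $\alpha$ has a nontrivial kernel (by Lemma~\ref{p7}, $\ker\alpha$ is then a proper ideal), which is exactly why the proposition singles out these five algebras and no others.
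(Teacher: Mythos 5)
Your proof is correct, but it takes a genuinely different route from the paper's. The paper proves this proposition by brute exhibition: for each of the five algebras it simply writes down the multiplication table of a compatible associative algebra $\tilde{A}^2_1,\tilde{A}^2_4,\tilde{A}^2_6,\tilde{A}^2_8,\tilde{A}^2_9$, leaving the verifications of associativity and of the defining relation $\mu=\alpha\circ\mu'$ implicit. You instead invoke the structural Corollary following Definition~\ref{Defs1} (multiplicative plus $\alpha$ invertible implies associative type with canonical untwist $\mu'=\alpha^{-1}\circ\mu$), which reduces everything to reading off that the matrices $\diag(1,-1)$, $\id$, $\id$, $\left(\begin{smallmatrix}1&1\\0&1\end{smallmatrix}\right)$, $\left(\begin{smallmatrix}1&1\\0&1\end{smallmatrix}\right)$ are invertible; this is both shorter and more explanatory, since it makes the untwist canonical rather than guessed, and it correctly predicts which entries of the classification the proposition can cover. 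A concrete dividend of your method: computing $\alpha^{-1}\circ\mu$ reproduces the paper's tables for $A^2_1, A^2_4, A^2_6, A^2_8$, but for $A^2_9$ it yields $e_2\cdot e_1=e_1$, $e_2\cdot e_2=e_2$ (the opposite of $\tilde{A}^2_8$), whereas the paper prints $\tilde{A}^2_9$ identical to $\tilde{A}^2_8$; the printed table cannot be right, since it gives $\alpha(e_1\cdot e_2)=e_1\neq 0=e_1\ast e_2$, so your route actually corrects an apparent typo. Two small flags. First, the untwist of $A^2_6$ is the two-dimensional nilpotent algebra $x\K[x]/(x^3)$, not $\K[x]/(x^3)$ itself, which is three-dimensional and unital. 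Second, your closing boundary remark is phrased carefully enough, but it should not be read as proving that $A^2_2,A^2_3,A^2_5,A^2_7$ are \emph{not} of associative type: non-invertibility of $\alpha$ only blocks this particular route, and the negative statement (the paper's subsequent Remark) needs a separate argument --- for instance, $\mu=\alpha\circ\mu'$ forces the image of $\mu$ into the image of $\alpha$, which fails for $A^2_2$ because $e_2\ast e_2=e_2\notin\spanvec(e_1)$.
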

\begin{proof}
Indeed, we set in the following  corresponding associative algebras : 
\begin{enumerate}
\item[$\tilde{A}^2_1$] : $e_1\cdot e_1=-e_1,\quad e_1\cdot e_2=-e_2,\quad e_2\cdot e_1=-e_2, \quad e_2\cdot e_2=e_1.$
\item[$\tilde{A}^2_4$] : $e_1\cdot e_1=e_1,\quad e_1 \cdot e_2=e_2,\quad e_2 \cdot e_1=e_2,\quad e_2\cdot e_2=0.$
\item[$\tilde{A}^2_6$] : $e_1\cdot e_1=e_2,\quad e_1 \cdot e_2=0,\quad e_2 \cdot e_1=0,\quad e_2\cdot e_2=0.$
\item[$\tilde{A}^2_8$] : $e_1\cdot e_1=0,\quad e_1 \cdot e_2=e_1,\quad e_2 \cdot e_1=0,\quad e_2\cdot e_2=e_2.$
\item[$\tilde{A}^2_9$] : $e_1\cdot e_1=0,\quad e_1 \cdot e_2=e_1,\quad e_2 \cdot e_1=0,\quad e_2\cdot e_2=e_2.$
\end{enumerate}
\end{proof}
\begin{remark}
It turns out that $A^2_2, A^2_3, A^2_5, A^2_7$ cannot be obtained by twisting of an associative algebra.
\end{remark}
\subsection{Classification of $3$-dimensional Hom-associative algebras}\ \\
We seek for all $3$-dimensional Hom-associative algebras. 
 We consider two classes of morphism which are given by Jordan form,  namely they are represented by the matrices
$$\left(\begin{array}{ccc}
a&0&0\\
0&b&0\\
0&0&c
\end{array}
\right), \quad \left(\begin{array}{ccc}
a&1&0\\
0&a&0\\
0&0&b
\end{array}
\right), \quad
\left(\begin{array}{ccc}
a&1&0\\
0&a&1\\
0&0&a
\end{array}
\right).$$
Using similar calculation as in previous section, we obtain the following classification. 
\begin{theorem} 
  Every $3$-dimensional multiplicative Hom-associative algebra is isomorphic to one of the following pairwise non-isomorphic Hom-associative algebra $(A, \ast,\alpha)$, where $\ast$ is the multiplication and $\alpha$ the structure map. We set $\left\{e_1, e_2,e_3\right\}$ to be a basis of	$\mathbb{K}^3$ : (the non written  products and  images of $\alpha$ are equal to zero)
\end{theorem}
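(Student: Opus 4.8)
The plan is to follow the same strategy as in the two-dimensional case, replacing Lemma \ref{2l} by its three-dimensional analogues. By Proposition \ref{p1} together with the remark following it, every three-dimensional Hom-associative algebra is isomorphic to one whose structure map $\alpha$ is in Jordan canonical form, so it suffices to treat separately the three Jordan types displayed before the statement: the diagonal form $\diag(a,b,c)$, the form with a single $2\times 2$ block, and the form with a single $3\times 3$ block. For each fixed Jordan type I would substitute $\alpha$ into the system \eqref{s1} and solve for the structure constants $\mathcal{C}_{ij}^k$.

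The computation is organized so that the multiplicativity equations (the second family in \eqref{s1}) are imposed first, since they are the most restrictive. For a diagonal $\alpha$ with $\alpha(e_i)=\lambda_i e_i$, condition \eqref{multiplicativity} reads $\mathcal{C}_{ij}^k(\lambda_k-\lambda_i\lambda_j)=0$ for all $i,j,k$, forcing $\mathcal{C}_{ij}^k=0$ unless $\lambda_k=\lambda_i\lambda_j$; analogous but less diagonal relations hold for the two Jordan-block cases. This already splits each Jordan type into sub-cases according to the coincidences and the vanishing among the eigenvalues $a,b,c$ (all distinct, two equal, all equal, one or more equal to $0$ or $1$). Within each sub-case the surviving structure constants are then constrained by the Hom-associativity equations (the first family in \eqref{s1}), which cut out an explicit affine variety that I would solve with a computer algebra system, exactly as announced for dimension two.

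Having produced, for each branch, a parametrized family of solutions $(\mu,\alpha)$, the next step is to reduce these families to canonical representatives under isomorphism. Here the role of Lemma \ref{2l} is played by a description of the centralizer of $\alpha$ in $GL_3(\mathbb{K})$: by \eqref{d2} an isomorphism between two algebras sharing the same $\alpha$ must satisfy $\varphi\circ\alpha=\alpha\circ\varphi$, and for a diagonal $\alpha$ with distinct eigenvalues this forces $\varphi$ to be diagonal, while for repeated eigenvalues or genuine Jordan blocks $\varphi$ ranges over the corresponding block-triangular centralizer. Transporting the structure by such $\varphi$ via the action $\Theta$ then normalizes the remaining parameters and collapses each family onto the listed algebras $A^3_i$. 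Finally, to see that the list is irredundant I would record discriminating invariants --- the Jordan type of $\alpha$, $\dim\ker\alpha$, $\dim\mu(A,A)$, commutativity, and the dimension of the centre $C(A)$ --- and verify that no two entries share all of them, ruling out isomorphism.

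The main obstacle is not any single conceptual point but the combinatorial size of the case analysis: each Jordan type branches into many eigenvalue sub-cases, each of which yields its own Gr\"obner-basis solution set, and each solution set must then be sifted modulo the (case-dependent) centralizer of $\alpha$. Keeping the branches exhaustive and mutually exclusive, and ensuring that the normalization under $\Theta$ is pushed far enough that the distinct $A^3_i$ really are pairwise non-isomorphic, is where the real work lies.
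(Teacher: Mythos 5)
Your proposal matches the paper's method: the paper likewise reduces $\alpha$ to one of the three displayed Jordan forms (justified by Proposition \ref{p1} and the remark following it), solves the system \eqref{s1} by computer algebra as in the $2$-dimensional case, and sifts the solutions modulo maps commuting with $\alpha$ in the spirit of Lemma \ref{2l} --- indeed the paper's entire stated proof is ``using similar calculation as in the previous section.'' Your write-up is, if anything, more explicit than the paper's (the multiplicativity constraint $\mathcal{C}_{ij}^k(\lambda_k-\lambda_i\lambda_j)=0$, the eigenvalue sub-case branching, and the use of invariants such as $\dim\ker\alpha$ and $\dim\mu(A,A)$ to certify pairwise non-isomorphism), but it is the same approach.
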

\begin{itemize}
\item [$A^3_1$] : $e_1\ast e_1=e_1,\quad e_2\ast e_2=e_2+e_3\quad e_2\ast e_3=e_2+e_3,\quad e_3\ast e_2=e_2+e_3,\, e_3\ast e_3=e_2+e_3,\quad   \alpha(e_1)=e_1;$
	\item [$A^3_2$] : $e_1\ast e_1=p_{1}e_1,\quad e_2\ast e_2=p_{2}e_2,\quad e_3\ast e_3=p_{3}e_3,\quad \alpha(e_1)=e_1,\quad 
\alpha(e_2)=e_2;$
\item [$A^3_3$] : $e_1\ast e_1=p_{1}e_1,\quad e_2\ast e_2=p_{2}e_2,\quad e_3\ast e_3=p_{3}e_3,\quad \alpha(e_1)=e_1,\quad
\alpha(e_2)=e_2,\quad\alpha(e_3)=e_3;$
\item [$A^3_4$] : $e_1\ast e_2=p_{1}e_1,\,e_1\ast e_3=p_{2}e_1,\, e_2\ast e_2=p_{3}e_1,\quad e_2\ast e_3=p_{4}e_1,\,
 e_3\ast e_1=p_{5}e_1,\quad e_3\ast e_2=p_{4}e_1,\\ e_3\ast e_3=p_{6}e_1,\quad \alpha(e_2)=e_1;$
\item [$A^3_5$] : $e_2\ast e_2=p_{1}e_1,\quad e_3\ast e_3=p_{2}e_3,\quad \alpha(e_1)=e_1,\quad \alpha(e_2)=e_1+e_2;$
\item [$A^3_6$] : $e_1\ast e_2=e_1,\quad e_2\ast e_2=e_1,\quad e_2\ast e_3=e_1,\quad e_3\ast e_2=e_1,\quad \alpha(e_2)=e_1,\quad
\alpha(e_3)=e_3;$
\item [$A^3_7$] : $e_2\ast e_2=e_1,\quad e_2\ast e_3=e_1,\,e_3\ast e_2=e_1,\quad e_3\ast e_3=e_1,\quad \alpha(e_1)=e_1,\quad 
\alpha(e_2)=e_1+e_2,\quad \alpha(e_3)=e_3;$
\item [$A^3_8$] : $e_1\ast e_2=-e_3,\quad e_2\ast e_1=e_3,\quad e_2\ast e_2=e_3,\quad \alpha(e_1)=e_1,\quad\alpha(e_2)=e_1+e_2,\quad
\alpha(e_3)=e_3;$
\item [$A^3_9$] : $e_2\ast e_3=p_{1}e_1,\quad e_3\ast e_2=p_{2}e_1,\quad \alpha(e_1)=ae_1,\quad\alpha(e_2)=e_1+ae_2,\quad
\alpha(e_3)=e_3;$
\item [$A^3_{10}$] : $e_2\ast e_2=p_{1}e_1,\quad e_3\ast e_3=p_{2}e_1,\quad \alpha(e_1)=e_1,\quad\alpha(e_2)=e_1+e_2,\quad
\alpha(e_3)=-e_3;$
\item [$A^3_{11}$] : $e_1\ast e_3=p_{1}e_1,\quad e_2\ast e_3=p_{2}e_1,\quad e_3\ast e_3=p_{3}e_1,\quad \alpha(e_2)=e_1,\quad
\alpha(e_3)=e_2;$
\item [$A^3_{12}$] : $e_2\ast e_3=-p_{1}e_1,\quad e_3\ast e_2=p_{1}e_1,\quad e_3\ast e_3=p_{2}e_1,\quad \alpha(e_1)=e_1,\quad
\alpha(e_2)=e_1+e_2,\quad\alpha(e_3)=e_2+e_3.$
\end{itemize}
\begin{proposition}
The Hom-associative algebras $A^3_3, A^3_{7}, A^3_{8}, A^3_{9}, A^3_{10}, A^3_{12}$ are of associative type.
\end{proposition}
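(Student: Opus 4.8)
The plan is to apply Definition \ref{Defs1} directly: to show that a given algebra $(A,\ast,\alpha)$ is of associative type I must exhibit an associative product $\mu'$ on $\mathbb{K}^3$ with $\ast=\alpha\circ\mu'$. The efficient route is the Corollary following Definition \ref{Defs1}, which says that whenever the twist map $\alpha$ is invertible the untwist $\mu'=\alpha^{-1}\circ\ast$ is automatically associative and serves as the compatible associative algebra. Hence the whole proof reduces to checking invertibility of $\alpha$ for each of the six algebras and then recording $\mu'=\alpha^{-1}\circ\ast$ explicitly, exactly as the $\tilde A^2_i$ were listed in the two-dimensional case.

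First I would write the matrix of $\alpha$ in the basis $\{e_1,e_2,e_3\}$ for each algebra. For $A^3_3$ one has $\alpha=\mathrm{id}$, so the algebra is already associative and $\mu'=\ast$. For $A^3_7$, $A^3_8$, $A^3_{10}$ and $A^3_{12}$ the matrices are upper triangular (they are the Jordan-type maps used in the classification) with diagonal entries in $\{1,-1\}$, hence $\det\alpha=\pm1$ and $\alpha$ is invertible; in each case I would invert $\alpha$ and set $\mu'=\alpha^{-1}\circ\ast$, which by the Corollary is associative. For instance, for $A^3_{12}$ the map $\alpha$ is unipotent with superdiagonal ones, so $\alpha^{-1}$ is again unipotent and $\mu'$ is obtained by applying $\alpha^{-1}$ to each structure-constant vector. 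This portion is a finite, routine computation, and one could equally display the resulting $\tilde A^3_i$ and verify $\ast=\alpha\circ\mu'$ termwise to mirror the two-dimensional proof.

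The one case that genuinely deserves care is $A^3_9$, where $\alpha(e_1)=ae_1$, $\alpha(e_2)=e_1+ae_2$, $\alpha(e_3)=e_3$ has determinant $a^2$. For $a\neq 0$ the map is invertible and the Corollary applies verbatim, giving $\mu'(e_2,e_3)=\tfrac{p_1}{a}e_1$, $\mu'(e_3,e_2)=\tfrac{p_2}{a}e_1$ and all other products zero; this is associative since every product lands in $\operatorname{span}(e_1)$ and $e_1$ annihilates every basis vector on both sides of each associator. The real obstacle is the degenerate value $a=0$, where $\alpha$ is no longer invertible and the Corollary fails, forcing one to build $\mu'$ inside $\alpha^{-1}(\ast(e_i,e_j))$ by hand. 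Here the $e_2$-component that $\alpha^{-1}$ must introduce into $\mu'(e_2,e_3)$ propagates through the associator $(e_2,e_3,e_3)$ and cannot be cancelled unless $p_1=0$, so either the parameter range of $A^3_9$ is to be read as $a\neq 0$ (the natural convention, under which $\alpha$ is an automorphism) or the statement is to be understood only for $a\neq 0$. Pinning down this invertibility boundary cleanly is the crux of the argument; everything else is the explicit inversion of $\alpha$ on each structure-constant vector.
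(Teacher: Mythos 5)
Your proposal is correct and takes essentially the same route as the paper: the paper's proof simply exhibits the untwisted products, which are exactly $\mu'=\alpha^{-1}\circ\ast$ computed case by case (the coefficients $\frac{p_1}{a}$, $\frac{p_2}{a}$ in $\tilde{A}^3_9$ confirm the implicit assumption $a\neq 0$ that you rightly isolate as the only delicate point). Your explicit appeal to the corollary on invertible twists is just the mechanism behind the paper's list, so nothing further is needed.
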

\begin{proof}
Indeed, we set in the following the corresponding associative algebras : 
\begin{enumerate}
\item[$\tilde{A}^3_{3}$] :$e_1\cdot e_1=p_{1}e_1,\quad e_2\cdot e_2=p_{2}e_2,\quad e_3\cdot e_3=p_{3}e_3;$
\item[$\tilde{A}^3_{7}$] :$e_2\cdot e_1=e_1\quad e_2\cdot e_2=e_1,\quad e_2\cdot e_3=e_1,\quad  e_3 \cdot e_2=e_1,\quad e_3\cdot e_3=e_1;$
\item[$\tilde{A}^3_{8}$] :$e_1\cdot e_2=-e_3,\quad e_2\cdot e_1=e_3\quad e_2\cdot e_2=e_3;$
\item[$\tilde{A}^3_{9}$] :$e_2\cdot e_3=\frac{p_{1}}{a}e_1,\quad e_3 \cdot e_2=\frac{p_{2}}{a}e_1;$
\item[$\tilde{A}^3_{10}$] :$e_2\cdot e_2=p_{1}e_1,\quad e_3\cdot e_3=p_{2}e_1;$
\item[$\tilde{A}^3_{12}$] :$e_2\cdot e_1=e_3\quad e_2\cdot e_3=-p_{1}e_1,\quad e_3 \cdot e_2=p_{1}e_1,\quad e_3\cdot e_3=p_{2}e_1$;
\end{enumerate}
where $p_i$ are parameters.
\end{proof}
\begin{remark}
It turns out that $A^3_1,A^3_2,A^3_4,A^3_5,A^3_{6},A^3_{11}$ cannot be obtained by twisting of an associative algebra.
\end{remark}
\begin{theorem}
  Every $2$-dimensional unital multiplicative Hom-associative algebra is isomorphic to one of the following pairwise non-isomorphic Hom-associative algebra $(A, \ast,\alpha)$, where $\ast$ is the multiplication and $\alpha$ the structure map. We set $\left\{e_1, e_2\right\}$ 
	to be a basis of $\mathbb{K}^2$ where $e_1$ is  the unit : 
\end{theorem}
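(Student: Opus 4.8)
The plan is to exploit the unitality constraints to cut the number of free parameters down to four, solve the two surviving defining equations, and then normalise by the subgroup of $GL_2(\K)$ fixing the unit. First I would fix the basis $\{e_1,e_2\}$ with $e_1=u$. The axioms $\mu(x,u)=\mu(u,x)=\alpha(x)$ and $\alpha(u)=u$ translate, through the relation $\mathcal{C}_{i1}^k=\mathcal{C}_{1i}^k=a_{ki}$, into $\alpha(e_1)=e_1$, $\mu(e_1,e_1)=e_1$, and $\mu(e_1,e_2)=\mu(e_2,e_1)=\alpha(e_2)$. Hence the entire structure is determined by the two vectors $\alpha(e_2)=a_{12}e_1+a_{22}e_2$ and $\mu(e_2,e_2)=\mathcal{C}_{22}^1e_1+\mathcal{C}_{22}^2e_2$, that is, by four scalars.

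Next I would reduce the number of defining equations. Using multiplicativity, every instance of the Hom-associativity identity \eqref{Homassoc} in which $e_1$ occurs as an argument collapses to a trivial identity, so the only independent Hom-associativity constraint is the triple $(e_2,e_2,e_2)$; likewise, multiplicativity \eqref{multiplicativity} is automatic whenever $e_1$ appears, leaving only $\alpha\mu(e_2,e_2)=\mu(\alpha(e_2),\alpha(e_2))$. Writing these two relations in the unknowns $a_{12},a_{22},\mathcal{C}_{22}^1,\mathcal{C}_{22}^2$ yields a small polynomial system that can be solved directly (or with a computer algebra system, exactly as in the non-unital classification).

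The normalisation step uses the Remark following Proposition \ref{p1}: isomorphisms of unital algebras are precisely the $\phi\in GL_2(\K)$ with $\phi(u)=u$, so here $\phi(e_1)=e_1$ and $\phi(e_2)=ce_1+de_2$ with $d\neq0$. Since $\alpha(u)=u$ forces $e_1$ to be an eigenvector of eigenvalue $1$, conjugating the matrix $\begin{pmatrix}1&a_{12}\\0&a_{22}\end{pmatrix}$ by such $\phi$ brings $\alpha$ to one of three normal forms, according to its second eigenvalue: the diagonal form $\alpha(e_2)=a_{22}e_2$ when $a_{22}\neq1$, the identity when $a_{22}=1$ and $a_{12}=0$ (the associative case), and the Jordan form $\alpha(e_2)=e_1+e_2$ when $a_{22}=1$ and $a_{12}\neq0$. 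For each normal form I would solve the constraints for $\mu(e_2,e_2)$ and then use the residual freedom — the $\phi$ fixing $e_1$ and commuting with the chosen $\alpha$, as in Lemma \ref{2l} — to fix the surviving scalars and read off the representatives in the list, invoking Definition \ref{d2} to confirm the claimed isomorphisms.

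The main obstacle will be the bookkeeping in this last step: keeping the orbit analysis exhaustive and non-redundant, and proving that the listed algebras are pairwise non-isomorphic. For the latter I would rely on discrete invariants preserved by isomorphism — the eigenvalues of $\alpha$ and whether $\alpha$ is diagonalisable or a nontrivial Jordan block, together with $\dim\operatorname{Im}\mu$ and whether the algebra is of associative type in the sense of Definition \ref{Defs1} — which should separate all the classes. The delicate point is the boundary case $a_{22}=1$, where the distinction between the diagonalisable and Jordan normal forms must be tracked carefully so that no genuine family is lost or double-counted.
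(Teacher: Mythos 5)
Your plan is essentially the paper's own route: the paper obtains the unital classification by adjoining the unitality relations $\mathcal{C}_{i1}^k=\mathcal{C}_{1i}^k=a_{ki}$ to the structure-constant system \eqref{s1}, restricting $\alpha$ (which fixes the unit) to Jordan normal forms as in Proposition \ref{p1}, solving the resulting polynomial system by direct (computer-assisted) calculation, and separating the orbits via Definition \ref{d2} and Lemma \ref{2l} --- precisely your reduction, normalisation and residual-automorphism analysis. Your observation that unitality together with multiplicativity makes every Hom-associativity instance involving $e_1$ automatic, leaving only the single multiplicativity constraint on $\mu(e_2,e_2)$, is a useful streamlining of the same computation rather than a different method.
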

\begin{itemize}
	\item [$A'^2_1$] : $e_1\ast e_1=e_1,\quad e_1\ast e_2=e_2,\quad e_2\ast e_1=e_2,\quad e_2\ast e_2=e_1+e_2,\quad\alpha(e_1)=e_1,\quad
\alpha(e_2)=e_2;$
\item [$A'^2_2$] : $e_1\ast e_1=e_1,\quad e_1\ast e_2=-e_2,\quad e_2\ast e_1=-e_2,\quad e_2\ast e_2=e_1,\quad\alpha(e_1)=e_1,\quad
\alpha(e_2)=-e_2;$
\item [$A'^2_3$] : $e_1\ast e_1=e_1,\quad e_1 \ast e_2=0,\quad e_2\ast e_1=0,\quad e_2\ast e_2=e_2,\quad\alpha(e_1)=e_1,\quad\alpha(e_2)=0;$
\item [$A'^2_3$] : $e_1\ast e_1=e_1,\quad e_1 \ast e_2=e_2,\quad e_2\ast e_1=e_2,\quad e_2\ast e_2=0,\quad\alpha(e_1)=e_1,\quad\alpha(e_2)=e_2 .$
\end{itemize}
\begin{proposition}
The unital Hom-associative algebras  $\tilde{A'}^2_1, \tilde{A'}^2_2, \tilde{A'}^2_4$ are of associative type.
\end{proposition}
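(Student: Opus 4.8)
The plan is to reduce everything to the corollary following Definition~\ref{Defs1}, which asserts that a multiplicative Hom-associative algebra $(A,\mu,\alpha)$ with invertible $\alpha$ is automatically of associative type, with compatible associative algebra $(A,\mu'=\alpha^{-1}\circ\mu)$. So the first step is simply to inspect the twist maps of the three algebras in question. For $A'^2_1$ and $A'^2_4$ we have $\alpha=\id$, and for $A'^2_2$ the map $\alpha=\diag(1,-1)$; all three are invertible. This is the only structural input required, since the corollary supplies associativity of the untwist for free.

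The second step is to read off $\mu'=\alpha^{-1}\circ\mu$ explicitly and check that it coincides with the algebra $\tilde{A'}^2_i$ named in the statement. When $\alpha=\id$ we get $\mu'=\mu$, so $A'^2_1$ and $A'^2_4$ are in fact already associative (as guaranteed by the remark that $\alpha=\id$ makes the Hom-structure associative) and $\tilde{A'}^2_1,\tilde{A'}^2_4$ carry the same multiplication tables. For $A'^2_2$ one uses $\alpha^2=\id$, hence $\alpha^{-1}=\alpha$, and computes
\[
\mu'(e_1,e_1)=\alpha(e_1)=e_1,\quad \mu'(e_1,e_2)=\mu'(e_2,e_1)=\alpha(-e_2)=e_2,\quad \mu'(e_2,e_2)=\alpha(e_1)=e_1,
\]
which is exactly the commutative associative table of $\tilde{A'}^2_2$, namely the group algebra of $\integers/2$.

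Strictly speaking the associativity of each $\mu'$ is delivered by the corollary, so no direct check of the associativity axiom is needed; if desired it can be confirmed on the two generators in a couple of lines. The only genuine bookkeeping is computing $\alpha^{-1}$ correctly in the case $A'^2_2$ and matching the resulting products against the stated untwist, which is routine precisely because invertibility of $\alpha$ makes the corollary directly applicable, so I do not expect any real obstacle. It is worth emphasizing why the fourth unital algebra $A'^2_3$ is deliberately absent from the list: its twist map $\alpha=\diag(1,0)$ is singular, so the corollary does not apply and $A'^2_3$ fails to be of associative type, in parallel with the non-unital remark that $A^2_2,A^2_3,A^2_5,A^2_7$ cannot be obtained by twisting an associative algebra.
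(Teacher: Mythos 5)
Your argument is correct and is essentially the paper's proof made explicit: the paper simply exhibits the compatible associative algebras $\tilde{A'}^2_1$, $\tilde{A'}^2_2$, $\tilde{A'}^2_4$, and these are exactly the untwists $\mu'=\alpha^{-1}\circ\mu$ you compute (trivially $\mu'=\mu$ for $A'^2_1$, $A'^2_4$ since $\alpha=\id$, and the $\integers/2$ group-algebra table for $A'^2_2$ via $\alpha^{-1}=\alpha=\diag(1,-1)$), with the corollary supplying the associativity that the paper's proof leaves unverified. One caution about your closing aside, which is not part of the proposition: the fact that the corollary does not apply to $A'^2_3$ (singular $\alpha$) does not by itself prove that $A'^2_3$ fails to be of associative type, since the corollary is only a sufficient criterion; the correct quick argument is that $\mu=\alpha\circ\mu'$ would force $\operatorname{Im}\,\mu\subseteq\operatorname{Im}\,\alpha=\spanvec(e_1)$, contradicting $\mu(e_2,e_2)=e_2$.
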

\begin{proof}
Indeed, we set in the following the corresponding associative algebras : 
\begin{enumerate}
\item[$\tilde{A'}^2_1$] : $e_1\cdot e_1=e_1,\quad e_1\cdot e_2=e_2,\quad e_2\cdot e_1=e_2, \quad e_2\cdot e_2=e_1+e_2.$
\item[$\tilde{A'}^2_2$] : $e_1\cdot e_1=e_1,\quad e_1 \cdot e_2=e_2,\quad e_2 \cdot e_1=e_2,\quad e_2\cdot e_2=e_1.$
\item[$\tilde{A'}^2_4$] : $e_1\cdot e_1=e_1,\quad e_1 \cdot e_2=e_2,\quad e_2 \cdot e_1=e_2,\quad e_2\cdot e_2=0.$
\end{enumerate}
\end{proof}
\begin{remark}
It turns out that $\tilde{A'}^2_3$ cannot be obtained by twisting of an associative algebra.
\end{remark}

\begin{theorem}
  Every $3$-dimensional unital multiplicative Hom-associative algebra is isomorphic to one of the following pairwise non-isomorphic Hom-associative algebras $(A, \ast,\alpha)$,  where $\ast$ is the multiplication and $\alpha$ the structure map. We set  $\left\{e_1, e_2,e_3\right\}$ to be a basis of $\mathbb{K}^3$ where $e_1$ is the unit  (the no written products and  images of $\alpha$ are equal to zero) :
\end{theorem}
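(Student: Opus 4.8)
The plan is to follow the same scheme as in the non-unital classifications: reduce the twist map $\alpha$ to a canonical form, solve the defining polynomial system \eqref{s1} together with the unitality relations, and then eliminate isomorphic duplicates by the residual group action. First I would exploit the unital analogue of Proposition \ref{p1}: since any morphism of unital algebras fixes the unit $e_1 = u$ and since $\alpha(u) = u$ by Definition \ref{d}, I may conjugate by an invertible $\phi$ with $\phi(e_1) = e_1$ without leaving the unital category. Such a conjugation fixes $e_1$ and induces an arbitrary base change on the quotient $A/\langle e_1 \rangle$, so it brings the map induced by $\alpha$ on that quotient into $2 \times 2$ Jordan form. This produces a short finite list of candidate twist maps, indexed by the three Jordan types used for the non-unital $3$-dimensional case but with the eigenvalue attached to $e_1$ frozen to $1$; any residual $e_1$-components of $\alpha(e_2)$ and $\alpha(e_3)$ survive as parameters to be treated later.

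Next, for each such $\alpha$ the unitality constraint $\mathcal{C}_{i1}^k = \mathcal{C}_{1i}^k = a_{ki}$ immediately determines every product involving $e_1$ in terms of the entries of $\alpha$, so the only genuinely free structure constants left are the four products $e_i \ast e_j$ with $i, j \in \{2, 3\}$. I would substitute these, together with the chosen $\alpha$, into the Hom-associativity and multiplicativity equations of \eqref{s1} and solve the resulting polynomial system with a computer algebra system, exactly as announced in the text. Each Jordan type then yields a (possibly multi-parameter) family of solutions. When $\alpha$ is invertible this step is guided by an additional structural fact: by the Corollary following Definition \ref{Defs1} the algebra is of associative type with compatible product $\mu' = \alpha^{-1}\mu$, for which $u$ is an honest unit, since $\mu'(x,u) = \alpha^{-1}\mu(x,u) = \alpha^{-1}\alpha(x) = x$; thus the invertible-$\alpha$ cases reduce to listing the $3$-dimensional unital associative algebras together with an automorphism fixing the unit, taken up to conjugacy.

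The normalization step is where the real work lies. Within each solution family I would act by the residual isomorphism group, namely the invertible $\phi$ fixing $e_1$ and commuting with the fixed $\alpha$, whose shape is pinned down by a centralizer computation in the spirit of Lemma \ref{2l}, and use the transcription \eqref{c.5} of the isomorphism conditions in structure constants to rescale and recombine $e_2, e_3$, absorbing redundant parameters and collapsing each family to one or finitely many normal forms. Finally I would certify that the listed algebras are pairwise non-isomorphic by separating them with invariants visibly preserved by \eqref{c.5}: the Jordan type and rank of $\alpha$, whether the algebra is of associative type, the dimension of $\operatorname{Ker}(\alpha)$, and the center.

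The main obstacle is not the polynomial solving, which is mechanical, but the bookkeeping of the residual orbits: proving that the normal forms are simultaneously exhaustive and genuinely distinct requires careful tracking of how the centralizer of each Jordan block acts on the four remaining structure constants, and this is most delicate in the degenerate, non-invertible $\alpha$ cases, where the associative-type shortcut is unavailable and the kernel geometry must be analyzed by hand.
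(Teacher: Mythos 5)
Your proposal is correct and is essentially the paper's own method: the paper likewise fixes $e_1=u$, reduces $\alpha$ to Jordan canonical form via the unital analogue of Proposition \ref{p1} (conjugating only by $\phi$ with $\phi(u)=u$), imposes the unitality relations $\mathcal{C}_{i1}^k=\mathcal{C}_{1i}^k=a_{ki}$ alongside system \eqref{s1}, solves by computer algebra, and separates isomorphism classes through the structure-constant conditions \eqref{c.5} and centralizer arguments in the spirit of Lemma \ref{2l}. Your two refinements --- performing the Jordan reduction on the quotient by the unit with residual $e_1$-components tracked as parameters, and reducing the invertible-$\alpha$ cases to unital associative algebras equipped with a unit-fixing automorphism via $\mu'=\alpha^{-1}\mu$ --- are sound sharpenings of the same scheme rather than a different route.
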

\begin{itemize}
	\item [$A'^3_1$] : $e_1\ast e_1=e_1,\quad e_2\ast e_2=e_2+e_3,\quad e_2\ast e_3=e_2+e_3,\quad e_3\ast e_2=e_2+e_3,\quad e_3\ast e_3=e_2+e_3,\quad \alpha(e_1)=e_1;$
	\item [$A'^3_2$] : $e_1\ast e_1=e_1,\quad e_2\ast e_2=e_2,\quad e_3\ast e_1=e_3,\quad e_3\ast e_3=e_1+e_3,\quad\alpha(e_1)=e_1,\quad
	\alpha(e_3)=e_3;$
	\item [$A'^3_3$] : $e_1\ast e_1=e_1,\quad e_2\ast e_2=e_2,\quad e_1\ast e_3=-e_3,\quad e_3\ast e_1=-e_3,\quad e_3\ast e_3=e_1,\quad
	\alpha(e_1)=e_1,\quad\alpha(e_3)=-e_3;$
	\item [$A'^3_4$] : $e_1\ast e_1=e_1,\quad e_1\ast e_2=e_2,\quad e_2\ast e_1=e_2,\quad e_2\ast e_2=e_1+e_2,\quad e_3\ast e_3=e_3,\quad
	\alpha(e_1)=e_1,\quad\alpha(e_2)=e_2;$
	\item [$A'^3_5$] : $e_1\ast e_1=e_1,\quad e_1\ast e_2=-e_2,\quad e_2\ast e_1=-e_2,\quad e_2\ast e_2=e_1,\quad e_3\ast e_3=e_3,\quad
	\alpha(e_1)=e_1,\quad\alpha(e_2)=-e_2;$
	\item [$A'^3_6$] : $e_1\ast e_1=e_1,\quad e_1\ast e_2=e_2,\quad e_1\ast e_3=e_3,\quad e_2\ast e_1=e_2,\quad e_2\ast e_2=e_2,\quad 
	e_2\ast e_3=e_3,\,e_3\ast e_1=e_3,\,e_3\ast e_2=e_3,\\ e_3\ast e_3=e_2+e_3,\,\alpha(e_1)=e_1,\,\alpha(e_2)=e_2,\,\alpha(e_3)=e_3;$
	\item [$A'^3_7$] : $e_1\ast e_1=e_1,\quad e_1\ast e_2=e_2,\,e_1\ast e_3=-e_3,\quad e_2\ast e_1=e_2,\,e_2\ast e_2=-e_2,\quad e_2\ast e_3=e_3,\,
	e_3\ast e_1=-e_3,\\ e_3\ast e_2=e_3,\,e_3\ast e_3=e_2,\,\alpha(e_1)=e_1,\,\alpha(e_2)=e_2,\,\alpha(e_3)=-e_3;$
	\item [$A'^3_8$] : $e_1\ast e_1=e_1,\,e_1\ast e_2=-e_2,\,e_1\ast e_3=e_3,\,e_2\ast e_1=-e_2,\,e_2\ast e_2=e_3,\,e_2\ast e_3=e_2,\,
	e_3\ast e_1=e_3,\,e_3\ast e_2=e_2,\\e_3\ast e_3=-e_3,\,\alpha(e_1)=e_1,\,\alpha(e_2)=-e_2,\,\alpha(e_3)=e_3;$	
	\item [$A'^3_9$] : $e_1\ast e_1=e_1,\quad e_1\ast e_2=ae_2,\quad e_1\ast e_3=e_3,\quad e_2\ast e_1=ae_2,\quad e_3\ast e_1=e_3,\quad
		e_3\ast e_3=e_3,\quad\alpha(e_1)=e_1,\\ \alpha(e_2)=ae_2,\quad\alpha(e_3)=e_3;$
	\item [$A'^3_{10}$] : $e_1\ast e_1=e_1,\,e_1\ast e_2=ae_2,\,e_1\ast e_3=-e_3,\,e_2\ast e_1=ae_2,\,e_3\ast e_1=-e_3,\,
		\alpha(e_1)=e_1,\,\alpha(e_2)=ae_2,\,\alpha(e_3)=-e_3;$	
	\item [$A'^3_{11}$] : $e_1\ast e_1=e_1,\quad e_1\ast e_2=ae_2,\quad e_1\ast e_3=a^2e_3,\quad e_2\ast e_1=ae_2,\,e_2\ast e_2=e_3,\,e_3\ast e_1 =a^2e_3,\,\alpha(e_1)=e_1,\\ \alpha(e_2)=ae_2,\quad\alpha(e_3)=a^2e_3;$
	\item [$A'^3_{12}$] : $e_1\ast e_1=e_1,\,e_1\ast e_2=e_2,\,e_1\ast e_3=be_3,\,e_2\ast e_1=e_2,\,e_2\ast e_2=\frac{1}{b}e_2,\,e_2\ast e_3=e_3,\,e_3\ast e_1=be_3,\,\alpha(e_1)=e_1,\,\alpha(e_2)=e_2,\,\alpha(e_3)=be_3;$
	\item [$A'^3_{13}$] : $e_1\ast e_1=e_1,\,e_1\ast e_2=-e_2,\,e_1\ast e_3=be_3,\,e_2\ast e_1=-e_2,\,e_3\ast e_1=be_3,\,\alpha(e_1)=e_1,\, 
\alpha(e_2)=-e_2,\,\alpha(e_3)=be_3;$
 \item [$A'^3_{14}$] : $e_1\ast e_1=e_1,\,e_1\ast e_2=b^2e_2,\,e_1\ast e_3=be_3,\quad e_2\ast e_1=b^2e_2,\,e_3\ast e_1=be_3,\,e_3\ast e_3=e_2,\,
 \alpha(e_1)=e_1,\\ \alpha(e_2)=b^2e_2,\,\alpha(e_3)=be_3;$
 \item [$A'^3_{15}$] : $e_1\ast e_1=e_1,\,e_1\ast e_2=ae_2,\,e_1\ast e_3=be_3,\,e_2\ast e_1=ae_2,\,e_3\ast e_1=be_3,\,\alpha(e_1)=e_1,\, 
\alpha(e_2)=ae_2,\,\alpha(e_3)=be_3.$
\end{itemize}
\begin{propo}
The unital Hom-associative algebras 
$\tilde{A'}^3_6, \tilde{A'}^3_{7},\tilde{A'}^3_{8}, \tilde{A'}^3_{9},\tilde{A'}^3_{10},\tilde{A'}^3_{11},\tilde{A'}^3_{12},$ 
$\tilde{A'}^3_{13},\tilde{A'}^3_{14},\tilde{A'}^3_{15}$ are of associative type.
\end{propo}
\begin{proof}
Indeed, we set in the following, the corresponding associative algebras : 
\begin{enumerate}
\item[$\tilde{A'}^3_6$] :$e_1\cdot e_1=e_1,\,e_1\cdot e_2=e_2,\,e_1\cdot e_3=e_3,\,e_2\cdot e_1=e_2\,e_2\cdot e_2=e_2,\,e_2\cdot e_3=e_3,\,
 e_3\cdot e_1=e_3,\,e_3\cdot e_2=e_3,\,e_3\cdot e_3=e_2+e_3;$
\item[$\tilde{A'}^3_{7}$] :$e_1\cdot e_1=e_1,\,e_1\cdot e_2=e_2,\,e_1\cdot e_3=e_3,\quad e_2\cdot e_1=e_2\,e_2\cdot e_2=-e_2,
e_2\cdot e_3=-e_3,\,e_3 \cdot e_1=e_3,\\ e_3 \cdot e_2=-e_3,\quad e_3\cdot e_3=e_2;$
\item[$\tilde{A'}^3_{8}$] :$e_1\cdot e_1=e_1,\,e_1\cdot e_2=e_2,\,e_1\cdot e_3=e_3,\,e_2\cdot e_1=e_2\,e_2\cdot e_2=e_3,\,
e_2\cdot e_3=e_2,\,e_3 \cdot e_1=e_3,\,e_3 \cdot e_2=-e_2,\,e_3\cdot e_3=-e_3;$
\item[$\tilde{A'}^3_{9}$] :$e_1\cdot e_1=e_1,\quad e_1\cdot e_2=e_2,\quad e_1\cdot e_3=e_3,\quad e_2\cdot e_1=e_2,\quad e_3\cdot e_1=e_3,e_3\cdot e_3=e_3;$
\item[$\tilde{A'}^3_{10}$] :$e_1\cdot e_1=e_1,\quad e_1\cdot e_2=e_2,\quad e_1\cdot e_3=e_3,\quad e_2\cdot e_1=e_2,\quad e_3 \cdot e_1=e_3;$
\item[$\tilde{A'}^3_{11}$] :$e_1\cdot e_1=e_1,\quad e_1\cdot e_2=e_2,\quad e_1\cdot e_3=e_3, \quad e_2\cdot e_1=e_2\quad e_2\cdot e_2=e_3,
\quad e_3 \cdot e_1=e_3;$
\item[$\tilde{A'}^3_{12}$] :$e_1\cdot e_1=e_1,\quad e_1\cdot e_2=e_2,\quad e_1\cdot e_3=e_3,\quad e_2\cdot e_1=e_2\quad e_2\cdot e_2=e_2,\quad
 e_2\cdot e_3=e_3,\quad e_3 \cdot e_1=e_3;$
\item[$\tilde{A'}^3_{14}$] :$e_1\cdot e_1=e_1,\quad e_1\cdot e_2=e_2,\quad e_1\cdot e_3=e_3, \quad e_2\cdot e_1=e_2,\quad 
 e_3 \cdot e_1=e_3,\quad e_3\cdot e_3=e_2.$
\end{enumerate}
\end{proof}
\begin{remark}
It turns out that $\tilde{A'}^3_1,\tilde{A'}^3_2,\tilde{A'}^3_3, \tilde{A'}^3_4, \tilde{A'}^3_5$ cannot be obtained by  twisting 
an associative algebra.
\end{remark}
\section{Derivations of Hom-associative algebras}
Let $(A, \mu, \alpha)$ be a multiplicative Hom-associative algebra. For any nonnegative integer $k$, we denote by $\alpha^{k}$ the $k$-times 
composition of $\alpha$, i.e $\alpha^k=\alpha\circ\cdots\circ\alpha$ ($k$-times). In particular, $\alpha^0=id$ and $\alpha^1=\alpha$.
\begin{definition}\label{df6}
For any non-negative integer $k$, a linear map $D : A\longrightarrow A$ is called an $\alpha^k$-derivation of a  Hom-associative $(A, \mu, \alpha)$, if
\begin{equation}\label{eq6}
  D\circ\alpha=\alpha\circ D 
\end{equation}
and
\begin{equation} 
D\circ\mu(f,g)=\mu(D(f),\alpha^k(g))+\mu(\alpha^k(f),D(g)).
\end{equation}
\end{definition} 
Denote by $D er_{\alpha^k}(A)$ the set of $\alpha^k$-derivations of a multiplicative Hom-associative algebra  $(A, \mu, \alpha)$. 
For any $f\in A$ satisfying $\alpha(f)=f$, we define $D_k(f) : A\rightarrow A$ by 
\begin{equation}
D_k(f)(g)=\mu(\alpha^k(g),f),\quad \forall g\in A. \nonumber
\end{equation}
Then $D(f)$ is an $\alpha^{k+1}$-derivation, which we will call an \textbf{Inner $\alpha^{k+1}$}-derivation. In fact, we have 
$D_k(f)(\alpha(g))=\mu(\alpha^{k+1}(g),f) =\alpha(\mu(\alpha^k(g),f)=\alpha\circ D_k(f)(g)$, which implies that identity  (\ref{eq6}) in Definition \ref{df6} is satisfied. On the other hand, we have 
$$\begin{array}{ll}
D_k(f)\mu(g,h)
&=\mu(\alpha^k(\mu(g,h),f)=\mu(\mu(\alpha^k(g),\alpha^k(h)),\alpha(f))\\
&=\mu(\alpha^{k+1}(g), \mu(\alpha^{k}(h),f))+\mu(\mu(\alpha^k(g),f),\alpha^{k+1}(h))\\
&=\mu(\alpha^{k+1}(g), D_k(f)(h))+\mu(D_k(f)(g),\alpha^{k+1}(h)).
\end{array}$$ 
Therefore, $D_k(f)$ is an $\alpha^{k+1}$-derivation.  The set of $\alpha^k$-derivations is denoted by \textbf{Inner$_{\alpha^k}(A)$} , i.e.
\begin{equation}
\textbf{Inner}_{\alpha^k}(A)=\left\{\mu(\alpha^{k-1}(\bullet),f|f\in A, \alpha(f)=f\right\}. 
\end{equation}
For any $D\in Der _{\alpha^k}(A)$ and $D'\in Der _{\alpha^s}(A)$, we define their commutator $\left[D, D'\right]$ as usual : 
$\left[D,D'\right]=D\circ D'-D'\circ D$. 
\begin{proposition}
For any $D\in Der_{\alpha^k}(A)$ and  $D'\in Der_{\alpha^s}(A)$, we have
$\left[D,D'\right]\in Der_{\alpha^{k+s}}(A).$
\end{proposition}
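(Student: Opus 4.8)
The plan is to verify directly the two conditions of Definition \ref{df6} for the map $[D,D']=D\circ D'-D'\circ D$ with twisting exponent $k+s$. First I would dispatch the commutation with $\alpha$. Since $D\circ\alpha=\alpha\circ D$ and $D'\circ\alpha=\alpha\circ D'$, one gets $DD'\alpha=D\alpha D'=\alpha DD'$ and likewise $D'D\alpha=\alpha D'D$, so that $[D,D']\circ\alpha=\alpha\circ[D,D']$. This settles identity \eqref{eq6}.

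Before attacking the Leibniz rule, I would record the auxiliary identity $D\circ\alpha^s=\alpha^s\circ D$ (and symmetrically $D'\circ\alpha^k=\alpha^k\circ D'$), obtained by iterating $D\alpha=\alpha D$. This is precisely what lets $D$ pass through the twist $\alpha^s$ that appears when one applies the $\alpha^s$-derivation rule for $D'$.

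The heart of the proof is the expansion of $[D,D']\mu(f,g)$. First apply the $\alpha^s$-derivation rule to $D'\mu(f,g)$, then apply $D$ to each resulting summand using the $\alpha^k$-derivation rule together with the auxiliary identity; this yields
\begin{align*}
DD'\mu(f,g)=&\ \mu(DD'(f),\alpha^{k+s}(g))+\mu(\alpha^{k+s}(f),DD'(g))\\
&+\mu(\alpha^k D'(f),\alpha^s D(g))+\mu(\alpha^s D(f),\alpha^k D'(g)).
\end{align*}
Interchanging the roles of $D$ and $D'$ (and of $k$ and $s$) gives the analogous expansion of $D'D\mu(f,g)$, in which the two mixed terms $\mu(\alpha^k D'(f),\alpha^s D(g))$ and $\mu(\alpha^s D(f),\alpha^k D'(g))$ reappear with the same sign. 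Subtracting, these mixed terms cancel and I am left with
\begin{align*}
[D,D']\mu(f,g)=\mu([D,D'](f),\alpha^{k+s}(g))+\mu(\alpha^{k+s}(f),[D,D'](g)),
\end{align*}
which is exactly the $\alpha^{k+s}$-derivation rule, completing the verification.

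The only genuine obstacle is the bookkeeping: one must ensure that each of the four terms in the two expansions carries the correct twist exponent, so that precisely the two mixed terms coincide and cancel while the pure terms assemble into the commutators $DD'-D'D$ applied to $f$ and to $g$. No twisted-associativity input is needed here—only the derivation rules, the commutation of $D$ and $D'$ with $\alpha$, and the bilinearity of $\mu$.
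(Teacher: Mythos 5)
Your proof is correct and follows essentially the same route as the paper's: expand $D\circ D'\mu(f,g)$ and $D'\circ D\mu(f,g)$ via the two derivation rules, use the iterated commutation identities $D\circ\alpha^s=\alpha^s\circ D$ and $D'\circ\alpha^k=\alpha^k\circ D'$ to normalize the mixed terms $\mu(\alpha^k D'(f),\alpha^s D(g))$ and $\mu(\alpha^s D(f),\alpha^k D'(g))$ so they cancel in the difference, and verify $[D,D']\circ\alpha=\alpha\circ[D,D']$ directly. No issues to report.
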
  
\begin{proof}
For any $f, g\in A,$ we have
$$\begin{array}{ll} 
& \left[D,D'\right]\mu(f,g)
=D\circ D'\mu(f,g)-D'\circ D\mu(f,g)\\
&=D(\mu(D'(f),\alpha^s(g))+\mu(\alpha^s(f),D'(g)))
-D'(\mu(D(f),\alpha^k(g))+\mu(\alpha^k(f),D(g)))\\
&=\mu(D\circ D'(f),\alpha^{k+s}(g))+\mu(\alpha^k\circ D'(f),D\circ\alpha^s(g))
+\mu(D\circ\alpha^s(f), \alpha^k\circ D'(g))+\mu(\alpha^{k+s}(f),D\circ D'(g))\\
&-\mu(D'\circ D(f),\alpha^{k+s}(g))-\mu(\alpha^s\circ D(f),D'\circ\alpha^k(g))
-\mu(D'\circ\alpha^k(f),\alpha^s\circ D(g))-\mu(\alpha^{k+s}(f),D'\circ D(g)).
\end{array}$$
Since $D$ and $D'$ satisfy $D\circ\alpha=\alpha\circ D,\quad D'\circ\alpha=\alpha\circ D'$, we obtain 
$\alpha^k\circ D'=D'\circ\alpha^k, \quad D\circ\alpha^s=\alpha^s\circ D$. Therefore, we have
\begin{equation}
\left[D,D'\right]\mu(f,g)=\mu(\alpha^{k+s}(f),\left[D,D'\right](g))+\mu(\left[D,D'\right](f), \alpha^{k+s}(g)).\nonumber
\end{equation}
Furthermore, it is straightforward to see that
$$
\left[D,D'\right]\circ\alpha=D\circ D'\circ\alpha-D'\circ D\circ\alpha
=\alpha\circ D\circ D'-\alpha\circ D'\circ D
=\alpha\circ\left[D, D'\right],
$$
which yields that $\left[D, D'\right]\in Der_{\alpha^{k+s}}(A)$.
\end{proof}
\section{Cohomology of Hom-associative algebras.}
In this  section, we deal with a  cochain complex that defines a  cohomology of multiplicative Hom-associative algebras and then compute the cohomology groups of Hom-associative algebras obtained in the 2-dimensional and 3-dimensional classifications.

Let $(A, \mu, \alpha)$ be a Hom-associative algebra, for $n\geq1$ we define a $\mathbb{K}$-vector space $C^n_{Hom}(A,A)$ of $n$-cochains as
follows : $\tilde{\varphi}\in C^n_{Hom}(A,A)$ is a $n$-linear map $\tilde{\varphi} : A^n\rightarrow A$ satisfying
$$
\alpha\circ\tilde{\varphi}(x_0,\cdots,x_{n-1})=\tilde{\varphi}(\alpha(x_0),\alpha(x_1),\cdots \alpha(x_{n-1})) \text{ for all }\, x_0, x_1, \cdots,x_{n-1}\in A. 
$$  
\begin{definition}
We call, for $n\geq 1$, $n$-coboundary operator of a Hom-associative algebra $(A, \mu, \alpha)$ the linear map 
$\delta^n_{Hom} : C^n_{Hom}(A,A)\rightarrow C^{n+1}_{Hom}(A,A)$ defined by 
\begin{small}
\begin{equation}\label{com1}
\begin{array}{ll}
&\delta^n_{Hom}\varphi(x_0,x_1,\dots,x_n)
=\mu(\alpha^{n-1}(x_0),\varphi(x_1,x_2,\dots,x_n))\\
&+\sum_{k=1}^n(-1)^k\varphi(\alpha(x_0),\alpha(x_1),\dots,\alpha(x_{k-2}),\mu(x_{k-1},x_k),\alpha(x_{k+1}),
\dots,\alpha(x_n))+(-1)^{n+1}\mu(\varphi(x_0,x_1,\dots,x_{n-1}),\alpha^{n-1}(x_n)).
\end{array}
\end{equation}
\end{small}
The space of $n$-cocycles  is defined by 
$
Z^n_{Hom}(A,A)=\left\{\varphi\in C^n_{Hom}(A,A) : \delta^n_{Hom}\varphi=0\right\},
$
and the space of $n$-coboundaries is defined by 
$
B^n_{Hom}(A,A)=\left\{\phi\in \delta^{n-1}_{Hom}\varphi : \varphi\in C^{n-1}(A,A)\right\}.
$
We call the $n^{th}$ cohomology group of the Hom-associative algebra $A$ the quotient 
$
H^n_{Hom}(A,A)=\frac{Z^n_{Hom}(A,A)}{B^n_{Hom}(A,A)}.
$
\end{definition}
In particular, a $2$-coboundary operator of Hom-associative algebra $A$ is given by the map 
$$
\delta^2_{Hom} : C_{Hom}^2(A,A)\rightarrow C_{Hom}^3(A, A), \, \varphi\mapsto\delta^2_{Hom}\varphi 
$$
defined by 
$$ 
\delta^2_{Hom}\varphi(x,y,z)
=\varphi(\alpha(x),\mu(y,z))-\varphi(\mu(x,y),\alpha(z))
+\mu(\alpha(x),\varphi(y,z))-\mu(\varphi(x,y),\alpha(z)).
$$ 

In order to compute the second cohomology group, we set for a 2-cochain $\varphi$,  $\varphi(e_i,e_j,e_k)=f^k_{ij}e_k.$ The
conditions $\delta^2_{Hom}\varphi(e_i,e_j,e_k)=0$ and $\alpha\circ\varphi(e_i,e_j)=\varphi(\alpha(e_i),\alpha(e_j))$ translate to the following 
system
$$\left\{\begin{array}{c}
\begin{array}{ll}  
\sum_{p=1}^n\sum_{q=1}^n(a_{pi}\mathcal{C}_{jk}^qf^r_{pq}-\mathcal{C}^p_{ij}a_{qk}f^r_{pq}+a_{pi}f^q_{jk}\mathcal{C}_{pq}^r-
f^p_{ij}a_{qk}\mathcal{C}_{pq}^r)=0,\, i,j,k,r=1,\dots, n.\\
\sum_{p=1}^na_{sp}f_{ij}^p-\sum_{p=1}^n\sum_{q=1}^na_{pi}a_{qj}f_{pq}^s=0,\quad i,j, s=1,\dots,n.
\end{array}
\end{array}\right.$$
Recall that  $\delta^1f(e_i,e_j)=f(e_i).e_j-f(e_i.e_j)+e_i.f(e_j)$.
\begin{remark}
The following  groups  correspond in Deformation theory to the space of obstructions to extend a deformation of order $p$ to a deformation of order $p+1$. 
A $3$-coboundary operator of Hom-associative algebra $A$ is given by a  map 
$$
\delta^3_{Hom} : C^3(A,A)\rightarrow C^4(A, A), \, \psi\mapsto\delta^3_{Hom}\psi 
$$
defined as 
$$\begin{array}{ll} 
\delta^3_{Hom}\psi(x,y,z,w)
&=\mu(\alpha^2(x),\psi(y,z,w))-\psi(\mu(x,y),\alpha(z),\alpha(w))\\
&+\psi(\alpha(x),\mu(y,z),\alpha(w))-\psi(\alpha(x),\alpha(y),\mu(z,w))
+\mu(\psi(x,y,z)),\alpha^2(w)).
\end{array}$$ 
For the computations, we set, for a 3-cochain $\psi$, $\psi(e_i,e_j,e_k,e_s)=\varphi^s_{ijk}e_s.$ 
Conditions $\delta^3_{Hom}\psi(e_i,e_j,e_k,e_s)=0$ and $\alpha\circ\psi(e_i,e_j,e_k)=\psi(\alpha(e_i),\alpha(e_j),\alpha(e_k))$ translate 
to the following system :
$$\left\{\begin{array}{c}
\begin{array}{ll}  
\sum_{p=1}^n\sum_{q=1}^n\sum_{r=1}^n(a_{pi}a_{rp}\varphi^q_{jkl}\mathcal{C}_{rq}^s-\mathcal{C}^p_{ij}a_{qk}a_{rl}\varphi^s_{pqr}+a_{pi}\mathcal{C}_{jk}^q\varphi^s_{pqr}-a_{pi}a_{qj}\mathcal{C}_{kl}^r\varphi^s_{pqr}\\+\varphi^p_{ijk}a_{ql}a_{rq}\mathcal{C}_{pr}^s)=0,
\quad i,j,k,r,s=1,\cdots, n.\\
\sum_{s=1}^n\varphi_{ijk}^sa_{ls}-\sum_{p=1}^n\sum_{q=1}^n\sum_{s=1}^na_{pi}a_{qj}a_{sk}\varphi_{pqs}^l=0,\quad i,j, k,l=1,=\cdots,n.
\end{array}
\end{array}\right.$$
The cohomology class is given by solving the equation  $\psi=\delta^2_{Hom}\varphi$, where  $\varphi$ is a 2-cochain. 
\end{remark}
\subsection{Cohomology and Obstructions in $\mathcal{HA}{ss_2}$}
\subsubsection*{Cohomology in $\mathcal{HA}{ss_2}$}
In the following, we compute the 2-cocycles $Z^2$ and the 2-cohomology group $H^2$ and then $Z^3$ and $H^3$ for the 2-dimensional and 3-dimensional Hom-associative algebras provided in the classification.  We do write only non-trivial images of basis elements.
\begin{enumerate}
\item For $A^2_1$, $A^2_4$, $A^2_8$ and $A^2_9$, the $Z^2$ is $0$-dimensional. Thus, we have	$H^2=\left\langle 0\right\rangle$.
	\item For $A^2_2$, the $Z^2$ is $1$-dimensional generated by the $2$-cocycle defined as :
	$\varphi_2(e_2,e_2)=e_2.$ Thus, we have $H^2=\left\langle 0\right\rangle$.
	\item For $A^2_3$, the $Z^2$ is $3$-dimensional generated by the $2$-cocycle generators :
	$\varphi_1(e_1,e_2)=e_2,$ $\varphi_2(e_2,e_1)=e_2,$ $ \varphi_3(e_2,e_2)=e_2.$ Thus, we have 
	$H^2=\left\langle\varphi_2,\varphi_3\right\rangle$.
	\item For $A^2_5$, the $Z^2$ is $1$-dimensional generated by the $2$-cocycle generators :
	$\varphi_1(e_1,e_1)=e_1.$ Thus, we have $H^2=\left\langle 0\right\rangle$.
	\item For $A^2_6$ the $Z^2$ is $2$-dimensional generated by the $2$-cocycle generators :
	$\varphi_1(e_1,e_1)=e_2,$ $\varphi_2(e_1,e_2)=e_2,$ $\varphi_2(e_2,e_1)=e_2.$ Thus, we have 
	$H^2=\left\langle\varphi_2\right\rangle$.
	\item For $A^2_7$, the $Z^2$ is $2$-dimensional generated by the $2$-cocycle generators :
	$\varphi_1(e_1,e_2)=e_1,\quad\varphi_2(e_2,e_1)=e_1.$ Thus, we have 
	$H^2=\left\langle\varphi_1,\varphi_2\right\rangle$.
\end{enumerate}
\subsubsection*{Obstructions spaces  of  $\mathcal{HA}ss_2$}
\begin{enumerate}
	\item For $A^2_1$, $A^2_4$, $A^2_8$, $A^2_9$, the $Z^3$ is $0$-dimensional. Thus, we have $H^3=\left\langle 0\right\rangle.$ 
	\item For $A^2_2$, the $Z^3$ is $4$-dimensional generated by the $3$-cocycle generators : \\	 
$\begin{array}{ll}  
\psi_1(e_1,e_2,e_1)=e_2,\\
\psi_1(e_1,e_2,e_2)=-e_2,\quad
\psi_2(e_2,e_1,e_2)=e_2,
\end{array}$
$\begin{array}{ll} 
\psi_3(e_2,e_2,e_1)=e_2,\\
\psi_4(e_2,e_2,e_2)=e_2. 
\end{array}$
Thus, we have $H^3=\left\langle \psi_1,\psi_2,\psi_3,\psi_4\right\rangle.$
\item For $A^2_3$, the $Z^3$ is $6$-dimensional generated by the $3$-cocycle generators :\\	
 $\begin{array}{ll}  
\psi_1(e_1,e_2,e_1)=e_2,\\
\psi_2(e_1,e_2,e_2)=e_2, 
\end{array}$	
$\begin{array}{ll}
\psi_3(e_2,e_1,e_1)=e_2,\\ 
\psi_4(e_2,e_1,e_2)=e_2,
\end{array}$
$\begin{array}{ll} 
\psi_5(e_2,e_2,e_1)=e_2,\\
\psi_6(e_2,e_2,e_2)=e_2.
\end{array}$
Thus, we have $H^3=\left\langle \psi_1,\cdots,\psi_6\right\rangle.$
\item For $A^2_5$, the $Z^3$ is $4$-dimensional generated by the $3$-cocycle generators : \\
 $\begin{array}{ll}  
\psi_1(e_1,e_1,e_1)=e_1,\\
\psi_2(e_1,e_1,e_2)=e_1,\quad
\psi_3(e_1,e_2,e_1)=e_1, 
\end{array}$
$\begin{array}{ll}  
\psi_4(e_2,e_1,e_1)=e_1,\\
\psi_4(e_2,e_1,e_2)=-e_1. 
\end{array}$
Thus, we have $H^3=\left\langle \psi_1, \psi_3,\psi_4\right\rangle.$
\item For $A^2_6$, the $Z^3$ is $3$-dimensional generated by the $3$-cocycle generators : \\
$\begin{array}{ll}  
\psi_1(e_1,e_1,e_1)=e_1,\\
\psi_1(e_2,e_1,e_1)=2e_2, \quad
\psi_2(e_1,e_1,e_1)=e_2,
\end{array}$
$\begin{array}{ll}  
\psi_3(e_1,e_1,e_2)=e_2. \\
\psi_3(e_2,e_1,e_1)=-e_2,
\end{array}$
Thus, we have $H^3=\left\langle \psi_1,\varphi_2,\varphi_3\right\rangle.$
\item For $A^2_7$, the $Z^3$ is $2$-dimensional generated by the $3$-cocycle generators : \\
$\begin{array}{ll}  
\psi_1(e_1,e_2,e_2)=e_1,\\
\psi_1(e_2,e_2,e_2)=\frac{1}{2}e_2,
\end{array}$
$\begin{array}{ll}  
\psi_2(e_2,e_2,e_1)=e_1,\quad
\psi_2(e_2,e_2,e_2)=\frac{1}{2}e_2.
\end{array}$	
Thus, we have  $H^3=\left\langle \psi_1,\psi_2\right\rangle.$ 
\end{enumerate}
\subsection{Cohomology and Obstructions in $\mathcal{HA}{ss_3}$}
\subsubsection*{Cohomology in $\mathcal{HA}{ss_3}$}
\begin{enumerate}
		\item For $A^3_1$, the $Z^2$ is $12$-dimensional.Thus, we have $H^2=\left\langle\varphi_1,\dots,\varphi_{12}\right\rangle.$
\item For $A^3_2$, the $Z^2$ is $1$-dimensional generated by the $2$-cocycle generators	$\varphi_1(e_3,e_3)=e_3$. Thus, we have 
$H^2=\left\langle\varphi_1\right\rangle.$	
\item For $A^3_3$,$A^3_9$, the $Z^2$ is $0$-dimensional. Thus, we have $H^2=\left\langle\left\{0\right\} \right\rangle.$
\item For $A^3_4$, the $Z^2$ is $7$-dimensional. Thus, we have $H^2=\left\langle\varphi_1,\dots,\varphi_7\right\rangle.$
 \item For $A^3_5$, the $Z^2$ is $1$-dimensional generated by the $2$-cocycle generators $\varphi_1(e_3,e_3)=e_3$.
 Thus, we have $H^2=\left\langle 0\right\rangle.$
  \item For $A^3_6$, the $Z^2$ is $3$-dimensional generated by the $2$-cocycle generators 
	$\varphi_1(e_1,e_2)=e_1,$ $ \varphi_2(e_2,e_1)=e_1,$ $  \varphi_3(e_3,e_3)=e_3.$	Thus, we have $H^2=\left\langle \varphi_3\right\rangle.$
  \item For $A^3_7$ , the $Z^2$ is $1$-dimensional generated by the $2$-cocycle generators $\varphi_1(e_1,e_2)=e_1.$
Thus, we have  $H^2=\left\langle\varphi_1\right\rangle.$
 \item For $A^3_8$, the $Z^2$ is $2$-dimensional generated by the $2$-cocycle generators 
$\varphi_1(e_1,e_1)=e_3,$ $\varphi_2(e_1,e_2)=e_3, $ $ \varphi_2(e_2,e_1)=-e_3.$
Thus, we have $H^2=\left\langle\varphi_1,\varphi_2\right\rangle.$
\item For $A^3_{10}$, the $Z^2$ is $1$-dimensional generated by the $2$-cocycle generators 
$\varphi_1(e_3,e_3)=e_1$. Thus, we have $H^2=\left\langle 0\right\rangle.$
 \item For $A^3_{11}$, the $Z^2$ is $2$-dimensional generated by the $2$-cocycle generators
$\varphi_1(e_1,e_3)=e_1, $ $ \varphi_2(e_3,e_1)=e_1.$ Thus, we have $H^2=\left\langle \varphi_2\right\rangle.$
\item For $A^3_{12}$ , the $Z^2$ is $0$-dimensional. Thus, we have $H^2=\left\langle0  \right\rangle.$
\end{enumerate}
\subsubsection*{Obstructions spaces  of  $\mathcal{HA}ss_3$}
\begin{enumerate}
	\item For $A^3_1$, the $Z^3$ is $42$-dimensional and  we have 
$H^3=\left\langle\psi_1,\cdots,\psi_{42}\right\rangle\backslash\left\langle\psi_{31},\psi_{32},\psi_{37},\psi_{38}\right\rangle$.
\item For $A^3_2$, the $Z^3$ is $9$-dimensional and  we have $H^3=\left\langle\psi_1,\cdots,\psi_{9}\right\rangle.$
\item For $A^3_3$, the $Z^3$ is $0$-dimensional generated by the $3$-cocycle generators. Thus, we have
 $H^3=\left\langle\left\{ 0\right\}\right\rangle.$  
\item For $A^3_4$, the $Z^3$ is $2$-dimensional and we have $H^3=\left\langle\psi_1,\dots,2\right\rangle$
\item For $A^3_5$, the $Z^3$ is $11$-dimensional and we have $H^3=\left\langle\psi_1,\cdots,\psi_{11}\right\rangle$.
\item For $A^3_6$, the $Z^3$ is $18$-dimensional and we have $H^3=\left\langle\psi_1,\psi_4,\psi_5,\psi_6,\psi_{7},\psi_8,\psi_9,
\psi_{10},\psi_{14},\psi_{18}\right\rangle$.
\item For $A^3_7$, the $Z^3$ is $14$-dimensional and we have $H^3=\left\langle\psi_1,\cdots,\psi_{14}\right\rangle$. 
\item For $A^3_8$, the $Z^3$ is $7$-dimensional and we have $H^3=\left\langle\psi_1,\cdots,\psi_7\right\rangle.$ 
\item For $A^3_9$, the $Z^3$ is $5$-dimensional generated by the following  $3$-cocycles generators :\\
$\begin{array}{ll}  
\psi_1(e_1,e_3,e_3)=e_1,\\
\psi_1(e_2,e_3,e_3)=e_2,\\
\psi_2(e_2,e_3,e_3)=e_1,
\end{array}$
$\begin{array}{ll}  
\psi_3(e_3,e_2,e_3)=e_1,\\
\psi_4(e_3,e_3,e_1)=e_1
\end{array}$
$\begin{array}{ll}  
\psi_4(e_3,e_3,e_2)=e_2,\\
\psi_5(e_3,e_3,e_2)=e_1.
\end{array}$
Thus, we have $H^3=\left\langle\psi_1,\cdots,\psi_5\right\rangle$.
\item For $A^3_{10}$, the $Z^3$ is $7$-dimensional generated by the $3$-cocycle generators :\\
$\begin{array}{ll}  
\psi_1(e_1,e_2,e_2)=e_1,\\
\psi_1(e_2,e_2,e_1)=-e_1,\\
\psi_1(e_2,e_2,e_3)=-e_3,\\
\psi_1(e_3,e_2,e_2)=e_3,
\end{array}$
$\begin{array}{ll}  
\psi_2(e_1,e_3,e_3)=e_1,\\
\psi_2(e_2,e_3,e_3)=e_2,\\
\psi_2(e_3,e_3,e_1)=-e_1,\\
\psi_2(e_3,e_3,e_2)=-e_2,
\end{array}$
$\begin{array}{ll}  
\psi_3(e_2,e_2,e_2)=e_1,\\
\psi_4(e_2,e_3,e_3)=e_1,\\
\psi_5(e_3,e_2,e_3)=e_1,\\
\psi_6(e_3,e_3,e_2)=e_1.
\end{array}$
Thus, we have $H^3=\left\langle\psi_1,\cdots,\psi_4\right\rangle$.
\item For $A^3_{11}$, the $Z^3$ is $19$-dimensional and we have $H^3=\left\langle\psi_1,\dots,\psi_{19}\right\rangle\backslash\left\langle\psi_4,\psi_{10},\psi_{13}\right\rangle.$ 
\item For $A^3_{12}$, the $Z^3$ is $7$-dimensional and we have $H^3=\left\langle\psi_1,\cdots,\psi_5\right\rangle$.
\end{enumerate}
\subsection{Cohomology of associative type algebras in   $\mathcal{HA}ss_n$}\

We compute the third cohomology of the associative algebras corresponding to Hom-associative algebras of associative type. The coboundary operator may be obtained from the coboundary operator of Hom-associative algebras by taking $\alpha$ equals to the identity map.
We have the following observation.
\begin{theorem}
Let $(A, \mu,\alpha)$ be a Hom-associative algebra of associative type where $\mu=\alpha\mu$ and $(A,\mu')$ is an associative algebra.
Let $\varphi'$ be a $n$-cocycle with respect to Hochschild cohomology of $(A,\mu')$. If $\varphi'$ satisfies
$\alpha\varphi'=\varphi'\circ(\alpha\otimes\alpha)$ then $\alpha\varphi'$ is a $n$-cocycle of $(A, \mu,\alpha)$ with respect to Hom-type Hochschild cohomology.  
\end{theorem}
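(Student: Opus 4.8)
The plan is to verify directly the two defining properties of a Hom-type $n$-cocycle for the cochain $\Phi:=\alpha\circ\varphi'$: first that $\Phi$ is a genuine Hom-cochain, i.e. $\alpha\circ\Phi=\Phi\circ(\alpha\otimes\cdots\otimes\alpha)$, and then that $\delta^n_{Hom}\Phi=0$. Throughout I would use the structure relation of Definition \ref{Defs1} in the form $\mu=\alpha\circ\mu'$ with $(A,\mu')$ associative, the hypothesis $\alpha\circ\varphi'=\varphi'\circ(\alpha\otimes\cdots\otimes\alpha)$, and the fact that $\alpha$ is an algebra morphism of $(A,\mu')$, i.e. $\alpha\circ\mu'=\mu'\circ(\alpha\otimes\alpha)$. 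This last identity is not part of the hypothesis and must be extracted first: multiplicativity of $(A,\mu,\alpha)$, namely $\alpha\circ\mu=\mu\circ(\alpha\otimes\alpha)$, becomes after the substitution $\mu=\alpha\mu'$ the relation $\alpha^2\mu'=\alpha\,\mu'\circ(\alpha\otimes\alpha)$, that is $\alpha\circ\bigl(\alpha\mu'-\mu'\circ(\alpha\otimes\alpha)\bigr)=0$; when $\alpha$ is injective (in particular for the associative-type algebras of our classification, where $\alpha$ is invertible) this yields the morphism property, which I record for repeated use.

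The cochain condition is the routine step: I would compute $\alpha\circ\Phi=\alpha^2\varphi'$ on one hand, and on the other $\Phi\circ(\alpha\otimes\cdots\otimes\alpha)=\alpha\,\varphi'\circ(\alpha\otimes\cdots\otimes\alpha)=\alpha\,(\alpha\varphi')=\alpha^2\varphi'$ by the hypothesis; hence the two coincide and $\Phi\in C^n_{Hom}(A,A)$.

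The heart of the argument is the term-by-term reduction of $\delta^n_{Hom}\Phi$ defined in \eqref{com1}. I would substitute $\Phi=\alpha\varphi'$ and $\mu=\alpha\mu'$ into each of the $n+2$ summands and then push every occurrence of $\alpha$ inward using the morphism property of $\alpha$ on $\mu'$ together with the hypothesis on $\varphi'$. With the twisted arguments $X_i:=\alpha^2(x_i)$, each of the $n$ interior summands collapses to $(-1)^k\varphi'\bigl(X_0,\dots,\mu'(X_{k-1},X_k),\dots,X_n\bigr)$, reproducing exactly the interior terms of the ordinary Hochschild coboundary $\delta\varphi'$ of $(A,\mu')$. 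The two extreme summands reduce to $\mu'\bigl(\alpha^{\,n-2}X_0,\varphi'(X_1,\dots,X_n)\bigr)$ and $(-1)^{n+1}\mu'\bigl(\varphi'(X_0,\dots,X_{n-1}),\alpha^{\,n-2}X_n\bigr)$, respectively. The goal is then to recognise the total as the Hochschild cocycle identity $\delta\varphi'(X_0,\dots,X_n)=0$, which holds because $\varphi'$ is a Hochschild $n$-cocycle.

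The step I expect to be the main obstacle is precisely the bookkeeping of the powers of $\alpha$ carried by the two extreme arguments. The interior slots acquire a uniform $\alpha^2$, but the factor $\alpha^{n-1}$ attached to $x_0$ and $x_n$ in \eqref{com1}, combined with the extra $\alpha$ produced by $\mu=\alpha\mu'$, leaves the residual twists $\alpha^{\,n-2}X_0$ and $\alpha^{\,n-2}X_n$ that must be reconciled with the interior $\alpha^2$; this redistribution is exactly where the morphism property of $\alpha$ and the commutation $\alpha\varphi'=\varphi'\circ(\alpha\otimes\cdots\otimes\alpha)$ are brought to bear. For the deformation-theoretic case $n=2$ the residual power is $\alpha^{0}=\mathrm{id}$, the reconciliation is immediate, and the computation collapses cleanly to $\delta^2_{Hom}(\alpha\varphi')(x,y,z)=\delta^2\varphi'\bigl(\alpha^2(x),\alpha^2(y),\alpha^2(z)\bigr)=0$, matching the explicit $\delta^2_{Hom}$ recorded above; I would present this case in full and then carry out the general $\alpha$-power bookkeeping as the delicate point.
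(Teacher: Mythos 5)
Your reduction is careful---indeed more careful than the paper's own proof, which merely writes down the associative coboundary, invokes \eqref{com1}, and concludes ``by multiplication'' without ever tracking the powers of $\alpha$---and two of your preliminary points are well taken: the morphism identity $\alpha\circ\mu'=\mu'\circ(\alpha\otimes\alpha)$ is not among the hypotheses and your extraction of it genuinely needs $\alpha$ injective (an assumption the paper never states), and your verification of the cochain condition together with your complete treatment of $n=2$ is correct and is all that the paper's argument actually establishes.

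However, the ``$\alpha$-power bookkeeping'' you defer for $n\geq 3$ is not bookkeeping: it is the gap, and the step you postpone fails. After your (correct) reduction, with $X_i=\alpha^2(x_i)$,
\begin{align*}
\delta^n_{Hom}(\alpha\varphi')(x_0,\dots,x_n)
&=\mu'\bigl(\alpha^{n-2}(X_0),\varphi'(X_1,\dots,X_n)\bigr)
+\sum_{k=1}^n(-1)^k\varphi'\bigl(X_0,\dots,\mu'(X_{k-1},X_k),\dots,X_n\bigr)\\
&\quad+(-1)^{n+1}\mu'\bigl(\varphi'(X_0,\dots,X_{n-1}),\alpha^{n-2}(X_n)\bigr),
\end{align*}
and subtracting the Hochschild cocycle identity $\delta^n\varphi'(X_0,\dots,X_n)=0$ leaves the residue
$\mu'\bigl((\alpha^{n-2}-\id)(X_0),\varphi'(X_1,\dots,X_n)\bigr)+(-1)^{n+1}\mu'\bigl(\varphi'(X_0,\dots,X_{n-1}),(\alpha^{n-2}-\id)(X_n)\bigr)$.
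Neither the morphism property of $\alpha$ nor the commutation $\alpha\varphi'=\varphi'\circ(\alpha\otimes\cdots\otimes\alpha)$ can remove it: these identities move copies of $\alpha$ through $\mu'$ and $\varphi'$ uniformly, but cannot strip an extra $\alpha^{n-2}$ from a single slot. The residue is genuinely nonzero: for $n=3$ take $A=\mathbb{K}\times\mathbb{K}$ with componentwise product $\mu'$ and idempotent basis $f_1,f_2$, let $\alpha$ be the swap of the factors, $\mu=\alpha\mu'$, and $\varphi'=\delta^2\psi$ with $\psi(a,b)=a_1b_2f_1+a_2b_1f_2$; then $\psi$, hence $\varphi'$, commutes with $\alpha$, $\varphi'$ is a Hochschild $3$-cocycle, and yet a direct evaluation gives $\delta^3_{Hom}(\alpha\varphi')(f_1,f_1,f_2,f_2)=-f_1\neq 0$. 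So your plan proves the statement only for $n=2$ (the case relevant to deformations, where $\alpha^{n-2}=\id$); for $n\geq 3$ no redistribution will close the argument, and the theorem as stated needs an additional hypothesis of the type $\alpha^{n-2}=\id$. You should be aware that the paper's proof suffers from exactly the same defect, silently identifying $\alpha^{n}(x_0)$ with $\alpha^{2}(x_0)$ in the extreme terms, so your term-by-term analysis in fact exposes a gap in the source rather than merely failing to reproduce it.
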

\begin{proof}
Let 

$\begin{array}{ll}
\delta^n_{Ass}\tilde{\varphi}(x_0,\dots,x_n)
&=\tilde{\mu}(x_0,\tilde{\varphi}(x_1,x_2,\dots,x_n))
+\displaystyle\sum_{k=1}^n(-1)^k\tilde{\varphi}(x_0,\dots,x_{k-2},\tilde{\mu}(x_{k-1},x_k),x_{k+1},\dots,x_n)\\
&+(-1)^{n+1}\tilde{\mu}(\tilde{\varphi}(x_0,\dots,x_{n-1}),x_n).
\end{array}$

If $\tilde{\varphi}$ satisfies $$\alpha\circ\tilde{\varphi}(x_0,\dots,x_{n-1})=\tilde{\varphi}(\alpha(x_0),\dots,\alpha(x_{n-1}))$$
for $x_0,\dots,x_{n-1}\in A$,  by equation (\ref{com1}), we have 
\begin{small}
\begin{align*}
& \mu(\alpha^{n-1}(x_0),\varphi(x_1,x_2,\dots,x_n))\\& +\sum_{k=1}^n(-1)^k\varphi(\alpha(x_0),\dots,\alpha(x_{k-2}),\mu(x_{k-1},x_k),\alpha(x_{k+1}),\dots,\alpha(x_n))+(-1)^{n+1}\mu(\varphi(x_0,\dots,x_{n-1}),\alpha^{n-1}(x_n))=0.
\end{align*}
\end{small}
By  multiplication $\alpha=(\alpha_1,\dots,\alpha_{n})$, we obtain $\alpha\circ\tilde{\varphi}$ is a $n$-cocycle for  
$(A, \alpha\tilde{\mu}, \alpha)$.
\end{proof}
\subsection{Cohomology and Obstructions in $\mathcal{A}ss_2$}
\subsubsection*{Computation of cohomology in $\mathcal{A}ss_2$}
\begin{enumerate}
	\item  For $\tilde{A}^2_1$,the $Z^2$ is $4$-dimensional generated by the $2$-cocycle generators :\\
	$\begin{array}{ll}  
\tilde{\varphi}_1(e_1,e_1)=e_1,\\
\tilde{\varphi}_1(e_1,e_2)=e_2,
\end{array}$
$\begin{array}{ll}  
\tilde{\varphi}_1(e_2,e_1)=e_2,\\
\tilde{\varphi}_2(e_1,e_1)=e_2,
\end{array}$
$\begin{array}{ll}  
\tilde{\varphi}_2(e_1,e_2)=-e_1,\\
\tilde{\varphi}_2(e_2,e_1)=-e_2,
\end{array}$
$\begin{array}{ll}  
\tilde{\varphi}_3(e_2,e_2)=e_1,\\
\tilde{\varphi}_4(e_2,e_2)=e_2.
\end{array}$	
Thus, we have $H^2=\left\langle\tilde{\varphi}_1\right\rangle$.	
\item	$\tilde{A}^2_4$, the $Z^2$ is $4$-dimensional generated by the $2$-cocycle generators :\\
$\begin{array}{ll}  
\tilde{\varphi}_1(e_1,e_1)=e_1,\\
\tilde{\varphi}_1(e_1,e_2)=e_2,
\end{array}$
$\begin{array}{ll}  
\tilde{\varphi}_1(e_2,e_1)=e_2,\\
\tilde{\varphi}_2(e_1,e_1)=e_2,
\end{array}$
$\begin{array}{ll}  
\tilde{\varphi}_3(e_2,e_2)=e_1,\quad
\tilde{\varphi}_4(e_2,e_2)=e_2.
\end{array}$
Thus, we have $H^2=\left\langle\tilde{\varphi}_3\right\rangle$.
\item $\tilde{A}^2_6$, the $Z^2$ is $4$-dimensional generated by the $2$-cocycle generators :\\
$\begin{array}{ll}  
\tilde{\varphi}_1(e_1,e_1)=e_1,\\
\tilde{\varphi}_2(e_1,e_1)=e_2,
\end{array}$
$\begin{array}{ll}  
\tilde{\varphi}_3(e_1,e_2)=e_1,\\
\tilde{\varphi}_3(e_2,e_1)=e_1,\quad
\tilde{\varphi}_3(e_2,e_2)=e_2,
\end{array}$
$\begin{array}{ll}  
\tilde{\varphi}_4(e_2,e_2)=e_2,\\
\tilde{\varphi}_4(e_2,e_1)=e_2\\
\end{array}$
Thus, we have $H^2=\left\langle\tilde{\varphi}_1,\tilde{\varphi}_3, \tilde{\varphi}_4\right\rangle$.
\item $\tilde{A}^2_8$, the $Z^2$ is $4$-dimensional,  the $Z^2$ is $4$-dimensional generated by the $2$-cocycle generators :\\
$\begin{array}{ll}  
\tilde{\varphi}_1(e_1,e_1)=e_1,\\
\tilde{\varphi}_1(e_2,e_1)=e_2,
\end{array}$
$\begin{array}{ll}  
\tilde{\varphi}_2(e_1,e_2)=e_1,\\
\tilde{\varphi}_2(e_2,e_2)=e_2.
\end{array}$
Thus, we have $H^2=\left\langle\tilde{\varphi}_1,\tilde{\varphi}_2\right\rangle$.
\item For $\tilde{A}^2_9$, the $Z^2$ is $4$-dimensional generated by the $2$-cocycle generators :\\
$\begin{array}{ll}  
\tilde{\varphi}_1(e_1,e_1)=e_1,\\
\tilde{\varphi}_1(e_2,e_1)=e_2,
\end{array}$
$\begin{array}{ll}  
\tilde{\varphi}_2(e_1,e_2)=e_1,\\
\tilde{\varphi}_2(e_2,e_2)=e_2.
\end{array}$
Thus, we have $H^2=\left\langle\tilde{\varphi}_1,\tilde{\varphi}_2\right\rangle$.
\end{enumerate}
\subsubsection*{Obstructions spaces  of  $\mathcal{A}ss_2$}
\begin{enumerate}
\item For $\tilde{A}^2_1$ , the $Z^3$ is $4$-dimensional generated by the $3$-cocycle generators : \\
$\begin{array}{ll}  
\psi_1(e_1,e_1,e_2)=e_1,\\
\psi_1(e_1,e_2,e_2)=-e_2,\\
\psi_1(e_2,e_1,e_2)=e_2,\\
\psi_1(e_2,e_2,e_2)=e_1,
\end{array}$
$\begin{array}{ll}
\psi_2(e_1,e_1,e_2)=e_2,\\ 
\psi_2(e_1,e_2,e_2)=e_1,\\
\psi_2(e_2,e_1,e_2)=-e_1,\\
\psi_2(e_2,e_2,e_2)=e_2,
\end{array}$
$\begin{array}{ll}  
\psi_3(e_2,e_1,e_1)=e_1,\\
\psi_3(e_2,e_1,e_2)=e_2,\\
\psi_3(e_2,e_2,e_2)=-e_2\\
\psi_3(e_2,e_2,e_2)=e_1,
\end{array}$           
$\begin{array}{ll}  
\psi_4(e_2,e_1,e_1)=e_2,\\
\psi_4(e_2,e_1,e_2)=-e_1,\\
\psi_4(e_2,e_2,e_1)=e_1\\
\psi_4(e_2,e_2,e_2)=e_2.
\end{array}$

Thus, we have $H^3=\left\langle\tilde{\psi}_1,\dots,\tilde{\psi}_4\right\rangle$.
\item For $\tilde{A}^2_4$, the $Z^3$ is $5$-dimensional generated by the $3$-cocycle generators :\\ 
$\begin{array}{ll}  
\psi_1(e_1,e_1,e_2)=e_1,\\
\psi_1(e_1,e_2,e_2)=-e_2,\\
\psi_1(e_2,e_1,e_2)=e_2,
\end{array}$
$\begin{array}{ll}  
\psi_2(e_1,e_1,e_2)=e_2,\\
\psi_3(e_2,e_1,e_1)=e_1,\\
\psi_3(e_2,e_1,e_2)=e_2,
\end{array}$
$\begin{array}{ll}  
\psi_3(e_2,e_1,e_2)=-e_2,\\
\psi_4(e_2,e_1,e_2)=e_2,\\
\psi_5(e_2,e_2,e_2)=e_2.
\end{array}$                                                                                                                  
Thus, we have $H^3=\left\langle\tilde{\psi}_1,\cdots,\tilde{\psi}_5\right\rangle$.
\item For $\tilde{A}^2_6$, the $Z^3$ is $5$-dimensional generated by the $3$-cocycle generators :\\
$\begin{array}{ll}  
\psi_1(e_1,e_1,e_2)=e_2,\\
\psi_2(e_1,e_2,e_1)=e_1,
\end{array}$
$\begin{array}{ll}  
\psi_3(e_1,e_2,e_2)=e_1,\\
\psi_3(e_2,e_2,e_1)=-e_1,
\end{array}$
$\begin{array}{ll}  
\psi_4(e_2,e_1,e_2)=e_2,\\
\psi_5(e_2,e_2,e_2)=e_2.
\end{array}$
Thus, we have $H^3=\left\langle\tilde{\psi}_1,\cdots,\tilde{\psi}_5\right\rangle$.
\item For $\tilde{A}^2_8$, the $Z^3$ is $5$-dimensional generated by the $3$-cocycle generators :\\
$\begin{array}{ll}  
\psi_1(e_1,e_1,e_1)=e_1,\\
\psi_1(e_1,e_2,e_1)=-e_2,\\
\psi_1(e_2,e_1,e_1)=e_2,
\end{array}$
$\begin{array}{ll}  
\psi_2(e_1,e_1,e_2)=e_1,\\
\psi_2(e_1,e_2,e_1)=e_1,\\
\psi_2(e_1,e_2,e_2)=-e_2,\\
\psi_2(e_2,e_1,e_2)=e_2,
\end{array}$
$\begin{array}{ll}  
\psi_3(e_1,e_2,e_2)=e_1,\\
\psi_4(e_2,e_2,e_1)=e_1,\\
\psi_5(e_2,e_2,e_2)=e_1.
\end{array}$
Thus, we have $H^3=\left\langle\tilde{\psi}_1,\cdots,\tilde{\psi}_5\right\rangle$.
\item For $\tilde{A}^2_9$, the $Z^3$ is $5$-dimensional generated by the $3$-cocycle generators :\\
$\begin{array}{ll}  
\psi_1(e_1,e_1,e_1)=e_1,\\
\psi_1(e_1,e_2,e_1)=-e_2,\\
\psi_1(e_2,e_1,e_1)=e_2,
\end{array}$
$\begin{array}{ll}  
\psi_2(e_1,e_1,e_2)=e_1,\\
\psi_2(e_1,e_2,e_1)=e_1,\\
\psi_2(e_1,e_2,e_2)=-e_2,\\
\psi_2(e_2,e_1,e_2)=e_2,
\end{array}$
$\begin{array}{ll}  
\psi_3(e_1,e_2,e_2)=e_1,\\
\psi_4(e_2,e_2,e_1)=e_1,\\
\psi_5(e_2,e_2,e_2)=e_1.
\end{array}$
Thus, we have $H^3=\left\langle\tilde{\psi}_1,\cdots,\tilde{\psi}_5\right\rangle$.
\end{enumerate}
\subsection{Cohomology and Obstructions in $\mathcal{A}ss_3$}
\subsubsection*{Computation of cohomology in $\mathcal{A}ss_3$}
\begin{enumerate}
\item For $\tilde{A}^3_3$ the $Z^2$ is $9$-dimensional and we have 
	 $H^2=\left\langle\tilde{\varphi}_1,\cdots,\tilde{\varphi}_9\right\rangle$.  
\item For $\tilde{A}^3_7$ the $Z^2$ is $4$-dimensional and we have 
	 $H^2=\left\langle\tilde{\varphi}_1,\cdots,\tilde{\varphi}_4\right\rangle$.  
\item For $\tilde{A}^3_8$, the $Z^2$ is $8$-dimensional and we have $H^2=\left\langle\tilde{\varphi}_1,\cdots,\tilde{\varphi}_8\right\rangle$.  
\item For $\tilde{A}^3_9$, the $Z^2$ is $10$-dimensional and we have $H^2=\left\langle\tilde{\varphi}_1,\cdots,\tilde{\varphi}_{10}\right\rangle\backslash\left\langle\tilde{\varphi}_3,\tilde{\varphi}_6,\tilde{\varphi}_8,\tilde{\varphi}_9\right\rangle$.
\item For $\tilde{A}^3_{10}$, the $Z^2$ is $10$-dimensional and we have
 $H^2=\left\langle\tilde{\varphi}_1,\cdots,\tilde{\varphi}_{10}\right\rangle\backslash\left\langle\tilde{\varphi}_3,\tilde{\varphi}_6,\tilde{\varphi}_8,\tilde{\varphi}_9\right\rangle$.  
\item For $\tilde{A}^3_{12}$, the $Z^2$ is $2$-dimensional generated by the $2$-cocycle generators :\\ 
$\begin{array}{ll}  
\tilde{\varphi}_1(e_1,e_2)=e_1,\\
\tilde{\varphi}_1(e_2,e_1)=e_1,
\end{array}$
$\begin{array}{ll} 
\tilde{\varphi}_1(e_2,e_2)=2e_2,\\ 
\tilde{\varphi}_1(e_3,e_2)=e_3,
\end{array}$
$\begin{array}{ll} 
\tilde{\varphi}_2(e_2,e_1)=e_3,\\
\tilde{\varphi}_2(e_2,e_3)=e_1
\end{array}$
$\begin{array}{ll}  
\tilde{\varphi}_2(e_3,e_2)=-e_1,\\
\tilde{\varphi}_2(e_3,e_3)=-e_1.
\end{array}$
We have $H^2=\left\langle\tilde{\varphi}_1,\tilde{\varphi}_2\right\rangle$.  
\end{enumerate}
\subsubsection*{Obstructions spaces  of  $\mathcal{A}ss_3$}
\begin{enumerate}
\item For $\tilde{A}^3_3$, the $Z^3$ is $18$-dimensional generated by the $3$-cocycle generators.

Thus, we have $H^3=\left\langle\tilde{\psi}_1,\dots,\tilde{\psi}_{18}\right\rangle$. 
\item For $\tilde{A}^3_7$, the $Z^3$ is $5$-dimensional generated by the $3$-cocycle generators.

Thus, we have $H^3=\left\langle\tilde{\psi}_1,\cdots,\tilde{\psi}_5\right\rangle$. 
\item For $\tilde{A}^3_8$, the $Z^3$ is $24$-dimensional generated by the $3$-cocycle generators.

Thus, we have  $H^3=\left\langle\begin{array}{ll}\tilde{\psi_1},\cdots,\tilde{\psi_{24}}\end{array}\right\rangle$. 
\item For $\tilde{A}^3_9$, the $Z^3$ is $27$-dimensional generated by the $3$-cocycle generators.

Thus, we have  $H^3=\left\langle\begin{array}{ll}\tilde{\psi_1},\cdots,\tilde{\psi_{27}}\end{array}\right\rangle$. 
\item For $\tilde{A}^3_{10}$, the $Z^3$ is $23$-dimensional generated by the $3$-cocycle generators.

Thus, we have  $H^3=\left\langle\begin{array}{ll}\tilde{\psi_1},\cdots,\tilde{\psi_{23}}\end{array}\right\rangle$. 
\item For $\tilde{A}^3_{12}$, the $Z^3$ is $2$-dimensional generated by the $3$-cocycle generators :\\
$\begin{array}{ll}  
\psi_1(e_1,e_2,e_2)=e_1,\\
\psi_1(e_2,e_2,e_1)=-e_1,\\
\psi_1(e_3,e_2,e_2)=e_3,
\end{array}$
$\begin{array}{ll}  
\psi_2(e_2,e_1,e_2)=e_1,\\
\psi_2(e_2,e_1,e_3)=e_1,\\
\psi_2(e_2,e_2,e_1)=e_1,\\
\psi_2(e_2,e_2,e_3)=e_2,
\end{array}$
$\begin{array}{ll}  
\psi_2(e_2,e_3,e_2)=-e_3,\\
\psi_2(e_2,e_3,e_3)=-e_3,\\
\psi_2(e_3,e_2,e_1)=e_1.
\end{array}$
Thus, we have  $H^3=\left\langle\begin{array}{ll}\tilde{\psi_1},\tilde{\psi_2}\end{array}\right\rangle$. 
\end{enumerate}
\subsection{Cohomology and Obstructions in $\mathcal{UHA}ss_2$}
\subsubsection*{Cohomology in $\mathcal{UHA}ss_2$}
\begin{enumerate}
	\item For $A'^2_1$, $A'^2_2$ and $A'^2_4$ the $Z^2$ is $0$-dimensional and we have 
	$H^2=\left\langle0\right\rangle$. 
\item For $A'^2_3$, the $Z^2$ is $1$-dimensional generated by the $2$-cocycle generators $\varphi(e_2,e_2)=e_2$. 
We have $H^2=\left\langle 0\right\rangle$.  
\end{enumerate}
\subsubsection*{Obstructions in $\mathcal{UHA}ss_2$}
\begin{enumerate}
	\item For $A'^2_1$, $A'^2_2$ and $A'^2_4$ the $Z^3$ is $0$-dimensional and we have 
	$H^2=\left\langle0\right\rangle$.
\item For $A^2_3$, the $Z^3$ is $4$-dimensional generated by the $3$-cocycle generators :\\
$\begin{array}{ll}  
\psi'_1(e_1,e_2,e_1)=e_2,\\
\psi'_1(e_1,e_2,e_2)=-e_2,
\end{array}$
$\begin{array}{ll}  
\psi'_2(e_2,e_1,e_2)=e_2,\\
\psi'_3(e_2,e_2,e_1)=e_2,\quad
\psi'_4(e_2,e_2,e_2)=e_2.
\end{array}$
Thus, we have  $H^3=\left\langle\tilde{\psi_1},\tilde{\psi_2}\right\rangle$. 
\end{enumerate}
\subsection{Cohomology and Obstructions in $\mathcal{UHA}ss_3$}
\subsubsection*{Computation of cohomology in $\mathcal{UHA}ss_3$}
\begin{enumerate}
\item For $A'^3_1$, the $Z^2$ is $12$-dimensional generated by the $2$-cocycle generators :\\
\noindent$\begin{array}{ll}  
\varphi'_1(e_1,e_2)=e_2,\\
\varphi'_1(e_1,e_3)=-e_2,\\
\varphi'_2(e_1,e_2)=e_3,\\
\varphi'_2(e_1,e_3)=-e_3,
\end{array}$
$\begin{array}{ll}  
\varphi'_3(e_2,e_1)=e_2,\\
\varphi'_3(e_3,e_1)=-e_2,\\
\varphi'_4(e_2,e_1)=e_3,\\
\varphi'_4(e_3,e_1)=e_3,
\end{array}$
$\begin{array}{ll}  
\varphi'_5(e_2,e_2)=e_2,\\
\varphi'_6(e_2,e_2)=e_3,\\
\varphi'_7(e_2,e_3)=e_2,\\
\varphi'_8(e_2,e_3)=e_3,
\end{array}$
$\begin{array}{ll}  
\varphi'_9(e_3,e_2)=e_2,\\
\varphi'_{10}(e_3,e_2)=e_2,\\
\varphi'_{11}(e_3,e_3)=e_2,\\
\varphi'_{12}(e_3,e_3)=-e_3.
\end{array}$
We have $H^2=\left\langle\varphi'_1,\dots,\varphi'_{12}\right\rangle$.
\item For $A'^3_2$, the $Z^2$ is $1$-dimensional generated by the $2$-cocycle generators $\varphi'(e_2,e_2)=e_2$.
Thus, we have $H^2=\left\langle\varphi'_1\right\rangle$.
\item For $A'^3_3$, the $Z^2$ is $1$-dimensional generated by the $2$-cocycle generators $\varphi'(e_2,e_2)=e_2$.
Thus, we have $H^2=\left\langle 0\right\rangle$.
\item For $A'^3_4$, the $Z^2$ is $1$-dimensional generated by the $2$-cocycle generators $\varphi'(e_3,e_3)=e_3$.
Thus, we have $H^2=\left\langle \varphi'_1\right\rangle$.
\item For $A'^3_5$, the $Z^2$ is $1$-dimensional generated by the $2$-cocycle generators $\varphi'(e_3,e_3)=e_2$.
Thus, we have $H^2=\left\langle\varphi'_1\right\rangle$.
\item For $A'^3_6$, $A'^3_7$, $A'^3_8$, $A'^3_9$, $A'^3_{10}$, $A'^3_{11}$, $A'^3_{12}$, $A'^3_{13}$, $A'^3_{14}$, $A'^3_{15}$  the $Z^2$ is $0$-dimensional generated by the $2$-cocycle generators $\varphi'=0$.
\end{enumerate}
\subsubsection*{Obstructions Spaces in $\mathcal{UHA}ss_3$}
\begin{enumerate}
	\item For $A'^3_{1}$, the $Z^3$ is $42$-dimensional and  we have 
$H^3=\left\langle\psi_1,\cdots,\psi_{42}\right\rangle\backslash\left\langle\psi_{31},\psi_{32},\psi_{37},\psi_{38}\right\rangle$.
\item For $A'^3_{2}$, the $Z^3$ is $11$-dimensional.
Thus, we have $H^3=\left\langle\psi'_1,\dots,\psi'_{11}\right\rangle\backslash\left\langle\psi'_4,\psi'_5\right\rangle$.  
\item For $A'^3_{3}$, the $Z^3$ is $11$-dimensional.
Thus, we have $H^3=\left\langle\psi'_1,\dots,\psi'_{11}\right\rangle\backslash\left\langle\psi'_2,\psi'_7,\psi'_8,\psi'_{10}\right\rangle$. 
\item For $A'^3_{4}$, the $Z^3$ is $11$-dimensional.
Thus, we have $H^3=\left\langle\psi'_1,\dots,\psi'_{11}\right\rangle\backslash\left\langle\psi'_3,\psi'_8,\psi'_{10}\right\rangle$. 
\item For $A'^3_{5}$, the $Z^3$ is $11$-dimensional.
Thus, we have $H^3=\left\langle\psi'_1,\dots,\psi'_{11}\right\rangle\backslash\left\langle\psi'_9,\psi'_{10}\right\rangle$.
\item For $A'^3_{6}$,$A'^3_{7}$,$A'^3_{8}$, $A'^3_{9}$,$A'^3_{10}$,$A'^3_{11}$,$A'^3_{12}$,$A'^3_{13}$,$A'^3_{14}$, $A'^3_{15}$, the $Z^3$ is $0$-dimensional. Thus, we have $H^3=\left\langle0\right\rangle$. 
\end{enumerate}
\subsection{Cohomology and Obstructions in $\mathcal{UA}ss_2$}
\subsubsection*{Computation of cohomology in $\mathcal{UA}ss_2$}
\begin{enumerate}
	\item For $\tilde{A}'^2_1$, the $Z^2$ is $5$-dimensional generated by the $2$-cocycle generators : \\	
	$\begin{array}{ll}  
\tilde{\varphi'}_1(e_1,e_1)=e_1,\\
\tilde{\varphi'}_1(e_1,e_2)=e_1,\\
\tilde{\varphi'}_1(e_2,e_1)=e_2,
\end{array}$
$\begin{array}{ll} 
\tilde{\varphi'}_2(e_1,e_1)=e_1+e_2,\\ 
\tilde{\varphi'}_2(e_2,e_1)=e_1+e_2,
\end{array}$
$\begin{array}{ll} 
\tilde{\varphi'}_3(e_2,e_2)=e_1,\\
\tilde{\varphi'}_4(e_2,e_2)=e_2.
\end{array}$	
Thus, we have $H^2=\left\langle\tilde{\varphi'_1},\tilde{\varphi'_2},\tilde{\varphi'_3}\right\rangle$.
\item For $\tilde{A}'^2_2$, the $Z^2$ is $4$-dimensional generated by the $2$-cocycle generators :\\
	$\begin{array}{ll}  
\tilde{\varphi'}_1(e_1,e_1)=e_1,\\
\tilde{\varphi'}_1(e_1,e_2)=e_2,\\
\tilde{\varphi'}_1(e_2,e_1)=e_2,
\end{array}$
$\begin{array}{ll} 
\tilde{\varphi'}_2(e_1,e_1)=e_2,\\ 
\tilde{\varphi'}_2(e_1,e_2)=e_1,\\
\tilde{\varphi'}_2(e_2,e_1)=e_2,
\end{array}$
$\begin{array}{ll} 
\tilde{\varphi'}_3(e_2,e_2)=e_2,\\
\tilde{\varphi'}_4(e_2,e_2)=e_2.
\end{array}$
 Thus, we have $H^2=\left\langle\tilde{\varphi'_1},\tilde{\varphi'_2}\right\rangle$.
\item For $\tilde{A}'^2_4$, the $Z^2$ is $4$-dimensional generated by the $2$-cocycle generators :\\
	$\begin{array}{ll}  
\tilde{\varphi'}_1(e_1,e_1)=e_1,\\
\tilde{\varphi'}_1(e_1,e_2)=e_2,
\end{array}$
$\begin{array}{ll} 
\tilde{\varphi'}_1(e_2,e_1)=e_2,\\ 
\tilde{\varphi'}_2(e_1,e_1)=e_2,
\end{array}$
$\begin{array}{ll} 
\tilde{\varphi'}_3(e_2,e_2)=e_1,\\
\tilde{\varphi'}_4(e_2,e_2)=e_2.
\end{array}$
 Thus, we have $H^2=\left\langle\varphi'_2,\varphi'_3\right\rangle$.
\end{enumerate}
\subsubsection*{Obstructions spaces in $\mathcal{UA}ss_2$}
\begin{enumerate}
	\item For $\tilde{A}'^2_{1}$, the $Z^3$ is $1$-dimensional generated by the $3$-cocycle generators :\\
	$\tilde{\psi'}_1(e_1,e_2,e_2)=e_1,\,\tilde{\psi'}_1(e_2,e_1,e_2)=-e_1,\, \tilde{\psi'}_1(e_2,e_2,e_2)=-e_1+e_2.$ 
Thus, we have $H^3=\left\langle\tilde{\psi'}_1\right\rangle$.
\item For $\tilde{A}'^2_{2}$, the $Z^3$ is $2$-dimensional generated by the $3$-cocycle generators : \\
$\begin{array}{ll}  
\tilde{\psi}'_1(e_1,e_1,e_2)=e_1-e_2,\\
\tilde{\psi}'_1(e_1,e_2,e_2)=e_1-e_2,\\
\tilde{\psi}'_1(e_2,e_1,e_1)=-e_1-e_2,\\
\end{array}$
$\begin{array}{ll}  
\tilde{\psi}'_1(e_2,e_2,e_1)=-e_1+e_2,\\
\tilde{\psi}'_2(e_2,e_1,e_2)=e_1+e_2,\\
\tilde{\psi}'_2(e_2,e_2,e_2)=e_1-e_2.
\end{array}$
Thus, we have $H^3=\left\langle\tilde{\psi'}_1,\tilde{\psi'}_2\right\rangle$. 
\item For $\tilde{A}'^2_{4}$, the $Z^3$ is $1$-dimensional generated by the $3$-cocycle generators : \\
$\tilde{\psi}'_1(e_1,e_2,e_2)=e_1,\quad \tilde{\psi}'_1(e_2,e_1,e_2)=-e_2$.
Thus, we have $H^3=\left\langle\tilde{\psi'}_1\right\rangle$.
\end{enumerate}
\subsection{Cohomology and Obstructions in $\mathcal{UA}ss_3$}
\subsubsection*{Computation of cohomology in $\mathcal{UA}ss_3$}
\begin{enumerate}
	\item For $\tilde{A}'^3_6$, the $Z^2$ is $13$-dimensional. Thus, we have $H^2=\left\langle\tilde{\varphi'_1},\dots,
	\tilde{\varphi'_{13}}\right\rangle$.
\item For $\tilde{A}'^3_7$, the $Z^2$ is $12$-dimensional. Thus, we have $H^2=\left\langle\tilde{\varphi'_1},\dots,
\tilde{\varphi'_{12}}\right\rangle$.
\item For $\tilde{A}'^3_8$, the $Z^2$ is $12$-dimensional. Thus, we have $H^2=\left\langle\tilde{\varphi'_1},\dots,
\tilde{\varphi'_{12}}\right\rangle$.
\item For $\tilde{A}'^3_9$, the $Z^2$ is $11$-dimensional. Thus, we have
 $H^2=\left\langle\tilde{\varphi'_1},\tilde{\varphi'_3},\tilde{\varphi'_4},\tilde{\varphi'_5},\tilde{\varphi'_6},\tilde{\varphi'_7},
\tilde{\varphi'_8}\right\rangle$.
\item For $\tilde{A}'^3_{10}$, the $Z^2$ is $12$-dimensional. Thus, we have $H^2=\left\langle\tilde{\varphi_4},\dots,\tilde{\varphi_{12}}\right\rangle$.
\item For $\tilde{A}'^3_{11}$, the $Z^2$ is $10$-dimensional. Thus, we have $H^2=\left\langle\tilde{\varphi_4},\dots,\tilde{\varphi_{10}}\right\rangle$.
\item For $\tilde{A}'^3_{12}$, the $Z^2$ is $10$-dimensional generated. 
Thus, we have 
$H^2=\left\langle\tilde{\varphi_1},\dots,\tilde{\varphi_{10}}\right\rangle\backslash\left\langle\tilde{ \varphi}_3,\tilde{\varphi}_5\right\rangle$.
\item For $\tilde{A}'^3_{14}$, the $Z^2$ is $11$-dimensional. Thus, we have 
$H^2=\left\langle\tilde{\varphi_3},\tilde{\varphi_4},\tilde{\varphi_5},\tilde{\varphi_7},\tilde{\varphi_{8}}\right\rangle$.
\end{enumerate}
\subsubsection*{Obstructions spaces in $\mathcal{UA}ss_3$}
\begin{enumerate}
	\item For $\tilde{A}'^3_{6}$, the $Z^3$ is $18$-dimensional.
Thus, we have $H^3=\left\langle\tilde{\psi}'_1,\dots,\tilde{\psi}'_{18}\right\rangle$.
\item For $\tilde{A}'^3_{7}$, the $Z^3$ is $18$-dimensional.
Thus, we have $H^3=\left\langle\tilde{\psi}'_1,\cdots,\tilde{\psi}'_{18}\right\rangle.$
\item For $\tilde{A}'^3_{8}$, the $Z^3$ is $2$-dimensional generated by the $3$-cocycle generators : \\
$\begin{array}{ll}  
\tilde{\psi}'_1(e_1,e_1,e_2)=e_2,\\
\tilde{\psi}'_1(e_1,e_1,e_3)=e_3,
\end{array}$
$\begin{array}{ll}  
\tilde{\psi}'_1(e_2,e_1,e_1)=-e_2,\\
\tilde{\psi}'_1(e_3,e_1,e_1)=-e_3,
\end{array}$
$\begin{array}{ll}  
\tilde{\psi}'_2(e_2,e_2,e_2)=e_2,\\
\tilde{\psi}'_2(e_2,e_2,e_3)=e_3.
\end{array}$.
$\begin{array}{ll}  
\tilde{\psi}'_2(e_2,e_3,e_2)=-e_3,\\
\tilde{\psi}'_2(e_2,e_3,e_3)=-e_2.
\end{array}$.

Thus, we have $H^3=\left\langle\tilde{\psi'}_1,\tilde{\psi}'_{2}\right\rangle.$
\item For $\tilde{A}'^3_{9}$, the $Z^3$ is $16$-dimensional.
Thus, we have 
$H^3=\left\langle\tilde{\psi}'_1,\dots,\tilde{\psi}'_{16}\right\rangle$. 
\item For $\tilde{A}'^3_{10}$, the $Z^3$ is $19$-dimensional.
Thus, we have $H^3=\left\langle\tilde{\psi}'_1,\dots,\tilde{\psi}'_{19}\right\rangle$.
\item For $\tilde{A}'^3_{11}$, the $Z^3$ is $20$-dimensional.
Thus, we have $H^3=\left\langle\tilde{\psi}'_1,\dots,\tilde{\psi}'_{20}\right\rangle$.
\item For $\tilde{A}'^3_{12}$, the $Z^3$ is $20$-dimensional.
Thus, we have $H^3=\left\langle\tilde{\psi'}_1,\dots,\tilde{\psi'}_{20}\right\rangle$. 
\item For $\tilde{A}'^3_{14}$, the $Z^3$ is $20$-dimensional.
Thus, we have $H^3=\left\langle\tilde{\psi'}_1,\dots,\tilde{\psi'}_{18}\right\rangle$. 
\end{enumerate}

\section{Deformations and irreducible components of Hom-associative algebras}
In this section, we aim to discuss the geometric classification of $\mathcal{HA}ss_n$ and  $\mathcal{UHA}ss_n$ for $n=2,3$. We use to this end one parameter formal deformation theory introduced first by Gerstenhaber for  associative algebras and extended to Hom-associative algebras in \cite{AEM,MS-Forum}.
\begin{definition}
Let $(A,\mu,\alpha)$    be a  Hom-associative algebra.
A formal  deformation of the  Hom-associative algebra $\A$ is given by  a $\K[[t]]$-bilinear map  $\mu_t : A[[t]]\times A[[t]]\longrightarrow A[[t]]$ of the form
$\mu_t=\sum_{i\geq0}t^i \mu_i$ where each $\mu_i$ is a $\K$-bilinear-map  $\mu_i : A\times A\rightarrow A$ (extended to be $\K[[t]]$-bilinear)  and $\mu_0=\mu$
 such that hold for $x, y, z\in A$ the following condition
 \begin{equation}\label{def ass}
 \mu_t(\mu_t(x,y),\alpha(z))=\mu_t(\alpha(x),\mu_t(y,z))
 \end{equation}


Suppose that $(A\left[\left[t\right]\right], \mu_{1,t}, \alpha_{1,t})$ and $(A\left[\left[t\right]\right], \mu'_{1,t}, \alpha'_{1,t})$ are
Hom-associative deformations of the  Hom-associative algebras $(A, \mu, \alpha)$. They are said  equivalent if there exists
a formal isomorphism between them, i.e. a $\K \left[\left[t\right]\right]$-linear map $\varphi_t$, compatible with both the deformed multiplications and the deformed  twisting maps, of the form 
$
\varphi_t=\displaystyle\sum_{i\geq 0}t^i\varphi_i,
$  
where the $\varphi_i$ are linear maps $\varphi_i:A\rightarrow A$ and $\varphi_0=id_A$. Compatibility with the deformed multiplications means that 
$\varphi_t\circ \mu_t=\mu'\circ(\varphi_t\otimes\varphi_t)$, compatibility to the twisting maps means 
$\varphi_t\circ \alpha_t=\alpha'\circ\varphi_t.$ 
\end{definition}
\begin{proposition}
Let $\mu_{1,t}=\phi^{-1}\circ \mu_2\circ (\phi\otimes \phi)$ and $\alpha_{1,t}=\phi^{-1}\circ\alpha_2 \circ\phi$. Then  if $(A, \mu_2, \alpha_2)$ is  Hom-associative then $(A\left[\left[t\right]\right], \mu_{1,t}, \alpha_{1,t})$ is Hom-associative.
\end{proposition}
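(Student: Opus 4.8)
The plan is to recognize this statement as the formal-deformation counterpart of the transport-of-structure result in Proposition \ref{p1}. Indeed, the pair $(\mu_{1,t},\alpha_{1,t})$ is obtained from $(\mu_2,\alpha_2)$ by conjugation with $\phi$, exactly as $(\mu',\phi\alpha\phi^{-1})$ was obtained from $(\mu,\alpha)$ in Proposition \ref{p1}, but with the roles of $\phi$ and $\phi^{-1}$ interchanged. Since $\phi=\phi_t$ is a formal isomorphism with constant term $\id_A$, it is invertible as a $\K[[t]]$-linear map, so the same algebraic manipulations go through verbatim over $\K[[t]]$. Thus one option is simply to invoke Proposition \ref{p1} with $\phi$ replaced by $\phi^{-1}$; I would instead give the short direct verification to keep the formal-power-series setting self-contained.

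First I would check the Hom-associativity identity \eqref{def ass}. Applying $\phi$ inside the outer $\mu_2$ and using $\phi\circ\phi^{-1}=\id$ collapses the nested occurrences: for the left-hand side,
\begin{align*}
\mu_{1,t}(\mu_{1,t}(x,y),\alpha_{1,t}(z))
&=\phi^{-1}\mu_2\big(\phi\,\mu_{1,t}(x,y),\phi\,\alpha_{1,t}(z)\big)\\
&=\phi^{-1}\mu_2\big(\mu_2(\phi(x),\phi(y)),\alpha_2(\phi(z))\big),
\end{align*}
and symmetrically the right-hand side equals $\phi^{-1}\mu_2(\alpha_2(\phi(x)),\mu_2(\phi(y),\phi(z)))$. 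The Hom-associativity of $(A,\mu_2,\alpha_2)$, applied to the arguments $\phi(x),\phi(y),\phi(z)$, equates the two inner expressions, after which applying $\phi^{-1}$ preserves the equality. This establishes \eqref{def ass} for $\mu_{1,t}$.

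Next I would verify multiplicativity, $\alpha_{1,t}\circ\mu_{1,t}=\mu_{1,t}\circ(\alpha_{1,t}\otimes\alpha_{1,t})$, by the same cancellation: the left side reduces to $\phi^{-1}\alpha_2\,\mu_2(\phi(x),\phi(y))$, the right side to $\phi^{-1}\mu_2(\alpha_2\phi(x),\alpha_2\phi(y))$, and these agree because $\alpha_2$ is multiplicative for $\mu_2$. The only point needing care, the main obstacle though a mild one, is justifying that $\phi$ is invertible over $\K[[t]]$ so that the cancellations $\phi\circ\phi^{-1}=\id$ are legitimate; this is immediate since $\phi_t=\sum_{i\geq0}t^i\phi_i$ with $\phi_0=\id_A$ has invertible constant term, forcing $\phi_t$ to be invertible in the $t$-adic topology. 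Everything else is a routine conjugation computation identical in form to the proof of Proposition \ref{p1}.
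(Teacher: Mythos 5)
Your proof is correct and follows essentially the same route as the paper: a direct conjugation computation that unwinds the definitions of $\mu_{1,t}$ and $\alpha_{1,t}$, cancels $\phi\circ\phi^{-1}=\id$, and invokes the Hom-associativity of $(A,\mu_2,\alpha_2)$ on the arguments $\phi(x),\phi(y),\phi(z)$. Your two additions --- the explicit check of multiplicativity of $\alpha_{1,t}$ and the justification that $\phi_t$ is invertible over $\K[[t]]$ because its constant term is $\id_A$ --- are points the paper's proof silently omits, and including them only strengthens the argument.
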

\begin{proof}
By straightforward computation, we have
\begin{align*}
\mu_{1,t}(\alpha_{1,t}(x), \mu_{1,t}(y, z))
&=\phi^{-1}\mu_2(\phi(\phi^{-1}\circ\alpha_2\circ\phi(x)),\phi\phi^{-1}\circ\mu_2(\phi(y),\phi(z)))\\
&=\phi^{-1}\mu_2(\alpha_2\circ\phi(x), \mu_2(\phi(y),\phi(z)))\\
&=\phi^{-1}\mu_2(\mu_2(\phi(x), \phi(y)),\alpha_2\circ\phi(z))\\
&=\phi^{-1}\mu_2(\phi\circ\phi^{-1}(\mu_2(\phi(x),\phi(y))),\phi\circ\phi^{-1}\alpha_2\phi(z)))\\
&=\mu_{1,t}(\mu_{1,t}(x,y), \alpha_{1,t}(z)).
\end{align*}
\end{proof}

\begin{definition}
A Hom-associative algebra $A$ is called formally rigid, if every formal  deformation of $A$ is trivial. It is called 
geometrically rigid, if its orbid $\vartheta(\mu)$ is open in $\mathcal{HA}ss_n$. Then $\overline{\vartheta(\mu)}$ is an irreducible 
component of $\mathcal{HA}ss_n$.  
\end{definition}
\begin{remark}
Any irreducible component $\mathcal{C}$ of $\mathcal{HA}ss_n$ containing $A$ also contains all degenerations of $A$. Indeed, we have 
$\vartheta(\mu)\subset \mathcal{C}$ so that $\overline{\vartheta(\mu)}$ is contained in $\mathcal{C}$, since  $\mathcal{C}$ is closed.       
\end{remark}

\begin{proposition}
The irreducible components of $\mathcal{HA}ss_2$ are the Zariski closure of orbits of Hom-associative algebras 
$\Omega=\left\{A^2_3, A^2_5\right\}$.
\end{proposition}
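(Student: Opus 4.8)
The plan is to exploit the $GL_2(\mathbb{K})$-action by transport of structure and the theory of degenerations. By the classification theorem for $2$-dimensional multiplicative Hom-associative algebras, every point of $\mathcal{HA}ss_2$ lies in the orbit of one of $A^2_1,\dots,A^2_9$ (where $A^2_5$ and $A^2_7$ range over the indicated parameters), so that $\mathcal{HA}ss_2=\bigcup_i \overline{\vartheta(A^2_i)}$. Each $\overline{\vartheta(A^2_i)}$ is irreducible, being the Zariski closure of the image of the irreducible variety $GL_2(\mathbb{K})$ (respectively $GL_2(\mathbb{K})\times\mathbb{A}^r$ for the parametrized families) under a morphism. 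Consequently the irreducible components of $\mathcal{HA}ss_2$ are exactly the maximal members of this finite family of irreducible closed sets, and the statement is equivalent to the conjunction of: (a) every $A^2_i$ degenerates into $A^2_3$ or $A^2_5$, i.e. lies in $\overline{\vartheta(A^2_3)}\cup\overline{\vartheta(A^2_5)}$; and (b) $\overline{\vartheta(A^2_3)}$ and $\overline{\vartheta(A^2_5)}$ are pairwise incomparable and each maximal.

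For (a) I would produce explicit degenerations. Here $B\in\overline{\vartheta(A)}$ is witnessed by a one-parameter family $g_t\in GL_2\big(\mathbb{K}((t))\big)$ with $\lim_{t\to 0}\, g_t\cdot A=B$, the action on structure constants being the one of Proposition \ref{p1}, namely $\mu\mapsto g_t^{-1}\circ\mu\circ(g_t\otimes g_t)$ and $\alpha\mapsto g_t\circ\alpha\circ g_t^{-1}$. The strategy is to write, for each target $A^2_i$, a suitable singular $g_t$ (typically $\operatorname{diag}(t^{a},t^{b})$ or a unipotent perturbation) and read off the limit of the transported products and of the transported $\alpha$. Since the characteristic polynomial of $\alpha$ is invariant under the conjugation $\alpha\mapsto g_t\,\alpha\, g_t^{-1}$ and hence under degeneration, I would first sort the algebras by the characteristic polynomial of their structure map; this immediately restricts which $A^2_i$ can specialize to which and organizes the search for the $g_t$.

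For (b) I would use the semicontinuity of natural invariants along degenerations: the orbit dimension $\dim\vartheta(A)=4-\dim\operatorname{Aut}(A)$ (which strictly drops under a proper degeneration), the dimensions $\dim\operatorname{Der}(A)$, $\dim Z^2_{Hom}(A,A)$ and $\dim H^2_{Hom}(A,A)$ computed in Section 5 (which cannot decrease), together with the characteristic polynomial of $\alpha$. The vanishing $H^2_{Hom}(A^2_5,A^2_5)=\langle 0\rangle$ established above shows that $A^2_5$ admits no nontrivial deformation of $\mu$ with $\alpha$ fixed; hence its orbit is open in the fibre over its structure map, and letting the parameter $k$ vary exhibits $\overline{\vartheta(A^2_5)}$ as a top-dimensional irreducible closed set, which must be a component. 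Comparing the invariants above for $A^2_3$ and $A^2_5$ then shows that neither lies in the closure of the other, so the two closures are genuinely distinct maximal components, and by the Remark following the definition of geometric rigidity each such component absorbs all degenerations of its generic algebra.

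The main obstacle is step (a): one must certify that \emph{every} remaining algebra truly degenerates into $A^2_3$ or $A^2_5$, and this is delicate precisely because the characteristic polynomial of $\alpha$ is a degeneration invariant, so the degenerating $g_t$ must be chosen to land on the correct $\alpha$-stratum in the limit. Thus the bulk of the work is the explicit construction of the families $g_t$ realizing each $A^2_i$ as a limit, together with a global dimension count for $\mathcal{HA}ss_2$ excluding any further maximal orbit. A secondary subtlety is that $A^2_3$ and $A^2_5$ can share a characteristic polynomial (the value $k=1$ gives $x(x-1)$ for both), so that invariant alone does not separate them; to prove incomparability I would instead use the finer invariant given by the eigenvalue of $\alpha$ on the idempotent generator of $\mu(A,A)$ (equal to $1$ for $A^2_3$ and to $0$ for $A^2_5$), which is preserved along any degeneration within that fibre.
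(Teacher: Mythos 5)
Your two-step frame (cover $\mathcal{HA}ss_2$ by the finitely many orbit closures coming from the classification, then identify the maximal ones) is the only reasonable reading of the statement, and it matches what the paper implicitly does --- the paper in fact prints no proof at all, only a degeneration diagram. But your step (a) contains a fatal flaw, and it is one you half-noticed yourself. The coefficients of the characteristic polynomial of $\alpha$, in particular $\det\alpha$ and $\operatorname{tr}\alpha$, are regular functions on $\mathcal{HA}ss_2$ that are invariant under the transport-of-structure action (which conjugates $\alpha$); hence they are constant not just on each orbit but on each orbit closure. Throughout $\overline{\vartheta(A^2_3)}$ and throughout the closure of the union over $k$ of the orbits of $A^2_5$ one has $\det\alpha=0$, since $\alpha$ is singular for $A^2_3$ and for every member of the family $A^2_5$. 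Consequently no algebra with invertible structure map can lie in either closure: this rules out $A^2_1$ ($\det\alpha=-1$), $A^2_4$ and $A^2_6$ ($\alpha=\operatorname{id}$), and $A^2_8$, $A^2_9$ ($\alpha$ unipotent). Your proposed remedy --- choosing the degenerating family $g_t$ ``to land on the correct $\alpha$-stratum in the limit'' --- cannot work, because $g_t\alpha g_t^{-1}$ has the same characteristic polynomial for every $t$, and so does any limit of such conjugates; there is no stratum-crossing to engineer. So the explicit degenerations you promise for five of the nine isomorphism classes do not exist, and step (a) collapses.

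The collapse is corroborated by the cohomology computations you yourself cite: the paper finds $Z^2=0$ (hence $H^2=0$) for $A^2_1$, $A^2_4$, $A^2_8$, $A^2_9$, which by the standard rigidity heuristic you invoke for $A^2_5$ would make each of their orbits open, so that each closure $\overline{\vartheta(A^2_1)}$, $\overline{\vartheta(A^2_4)}$, etc.\ would be a further irreducible component --- directly contradicting the claim that $\overline{\vartheta(A^2_3)}$ and $\overline{\vartheta(A^2_5)}$ exhaust the components. In other words, taken at face value the proposition is inconsistent with the invariance of the characteristic polynomial of $\alpha$, and a blind proof attempt cannot succeed without either exhibiting these additional components (thereby refuting the statement as written) or reinterpreting it, e.g.\ as a description of the components of the locus where $\alpha$ is singular. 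Your step (b) material (semicontinuity of $\dim\operatorname{Aut}$, $\dim Z^2$, and the finer invariant separating $A^2_3$ from $A^2_5(k{=}1)$ inside the $\det\alpha=0$ locus) is sensible as far as it goes, but it cannot repair (a); the gap is not in finding cleverer $g_t$'s but in the statement your plan sets out to verify.
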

\begin{center}
\includegraphics[scale=0.4]{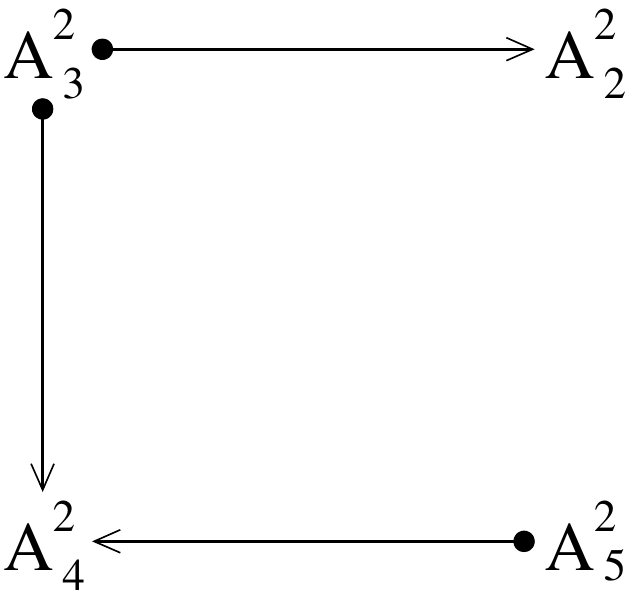} \\[1ex]
	Irreducible Components $\mathcal{HA}ss_2$
\end{center}

\begin{proposition}
The irreducible components of $\mathcal{HA}ss_3$ are the Zariski closure of orbits of Hom-associative algebras 
$\Omega=\left\{A^3_2, A^3_5, A^3_9\right\}$.
\end{proposition}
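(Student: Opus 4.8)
The plan is to exploit that, under the transport-of-structure action $\Theta$, every orbit closure is irreducible, so that the irreducible components of $\mathcal{HA}ss_3$ are exactly the maximal orbit closures. Because $A^3_2$, $A^3_5$ and $A^3_9$ each carry free parameters, I would first organise them into three constructible families: let $\mathcal{O}_i$ denote the union of the $GL_3(\mathbb{K})$-orbits of all members of the family $A^3_i$ as its parameters range over $\mathbb{K}^{k_i}$ (with $k_2=3$, $k_5=2$, $k_9=3$). Each $\mathcal{O}_i$ is the image of the irreducible variety $GL_3(\mathbb{K})\times\mathbb{K}^{k_i}$ under the morphism sending $(f,\text{parameters})$ to $f\cdot(\mu,\alpha)$, hence $\mathcal{O}_i$ is irreducible and so is its Zariski closure $\overline{\mathcal{O}_i}$. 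The goal is then twofold: to show that $\overline{\mathcal{O}_2}\cup\overline{\mathcal{O}_5}\cup\overline{\mathcal{O}_9}=\mathcal{HA}ss_3$, and that the three closures are pairwise incomparable, so that each is a genuine irreducible component and no further component exists.

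For the covering statement I would invoke the classification theorem: every point of $\mathcal{HA}ss_3$ is $GL_3(\mathbb{K})$-equivalent to one of $A^3_1,\dots,A^3_{12}$, so it suffices to place each $A^3_j$ inside one of the three closures. The three generic families lie in their own closures trivially, so the work is to exhibit, for each of the remaining algebras $A^3_1,A^3_3,A^3_4,A^3_6,A^3_7,A^3_8,A^3_{10},A^3_{11},A^3_{12}$, an explicit degeneration from a suitable specialisation of one of the three families. Concretely, for the relevant index $i$ I would construct a one-parameter family $g_t\in GL_3(\mathbb{K}(t))$ and a choice of parameters such that the transported structure $\Theta(g_t,(A^3_i,\mu,\alpha))=(g_t^{-1}\circ\mu\circ(g_t\otimes g_t),\,g_t\circ\alpha\circ g_t^{-1})$ has a finite limit as $t\to0$ equal to $(\mu_{A^3_j},\alpha_{A^3_j})$. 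Diagonal reparametrisations $g_t=\diag(t^{a_1},t^{a_2},t^{a_3})$ typically suffice to collapse parameters to $0$ or to merge eigenvalues of $\alpha$, and by the Remark each such limit lies in $\overline{\mathcal{O}_i}$. Carrying this out for all nine algebras proves $\mathcal{HA}ss_3\subseteq\overline{\mathcal{O}_2}\cup\overline{\mathcal{O}_5}\cup\overline{\mathcal{O}_9}$, while the reverse inclusion is automatic since $\mathcal{HA}ss_3$ is closed.

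To see that the three closures are distinct maximal components I would combine a dimension count with separating invariants. By the orbit--stabiliser relation the orbit of an algebra $A$ has dimension $9-\dim\mathrm{Aut}(A)$, and $\dim\mathrm{Aut}(A)$ is governed by the derivation spaces of Section 4, so that $\dim\overline{\mathcal{O}_i}$ is controlled by the generic orbit dimension together with the number $k_i$ of essential parameters. Since orbit dimension is lower semicontinuous, a proper degeneration strictly decreases it, so it suffices to verify pairwise incomparability. The cleanest separating invariant is the rank of $\alpha$, also lower semicontinuous on $\mathcal{HA}ss_3$: a generic member of $A^3_9$ has $\alpha$ of rank $3$ (as $\alpha(e_3)=e_3$ and $a\neq0$), whereas every member of $A^3_2$ and $A^3_5$ has $\alpha$ of rank $2$, so $A^3_9$ cannot lie in $\overline{\mathcal{O}_2}\cup\overline{\mathcal{O}_5}$, and a dimension comparison rules out the converse. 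To separate $A^3_2$ from $A^3_5$ I would use the finer data of the Jordan type of $\alpha$ together with the incompatibility of the limiting multiplications, or a direct comparison of orbit dimensions. Since each $\overline{\mathcal{O}_i}$ is irreducible, being contained in the union of the other two would force containment in one of them, which the invariants forbid; hence the three are precisely the irreducible components and no hidden fourth component exists.

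The main obstacle is the second paragraph: producing, verifying and organising the explicit one-parameter degenerations for all nine non-generic algebras, since one must ensure that the deformed multiplication and the deformed twist map converge \emph{simultaneously}, and that the limit reproduces the target structure constants exactly rather than a merely isomorphic copy. A secondary difficulty is choosing invariants fine enough to separate $A^3_2$ from $A^3_5$ yet genuinely semicontinuous, so that maximality of each component is established rigorously rather than by inspection.
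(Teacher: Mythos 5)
You should first be aware that the paper contains no proof of this proposition at all: it is stated bare and followed only by a picture of a degeneration diagram, so there is no argument of the authors' to compare your execution against. Your strategy -- realise each parametrised family $A^3_i$ as the image of the irreducible variety $GL_3(\mathbb{K})\times\mathbb{K}^{k_i}$ under the transport-of-structure morphism, cover $\mathcal{HA}ss_3$ by explicit one-parameter degenerations of the three generic families, and separate the closures by semicontinuous invariants -- is the standard and correct framework for such geometric classifications. But as written it is scaffolding, not a proof: the nine degenerations, the orbit-dimension computations, and the separating invariants for $\overline{\mathcal{O}_2}$ versus $\overline{\mathcal{O}_5}$ are all deferred, and those verifications are the entire content of the proposition. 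Moreover, your proposed separation of $A^3_2$ from $A^3_5$ by the Jordan type of $\alpha$ cannot work in the direction you need it: $\diag(1,1,0)$ lies in the closure of the conjugacy class of the rank-two Jordan map $\alpha_{A^3_5}$, so Jordan type poses no obstruction to $\mathcal{O}_2\subseteq\overline{\mathcal{O}_5}$, and only an orbit-dimension estimate or a multiplication invariant can decide that inclusion.

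There is also a concrete obstruction showing the covering step of your second paragraph cannot be carried out as planned: your own invariants refute it. Every member of the families $A^3_2$ and $A^3_5$ has $\operatorname{rank}\alpha=2$, and every member of the family $A^3_9$ has $\mu(A,A)\subseteq\mathbb{K}e_1$, hence $\dim\mu(A\otimes A)\leq 1$; both $\operatorname{rank}\alpha$ and $\dim\mu(A\otimes A)$ are constant on $GL_3(\mathbb{K})$-orbits and can only decrease in a Zariski limit. But $A^3_3$ with $p_1p_2p_3\neq 0$ has $\alpha=\operatorname{id}$ (rank $3$) and $\dim\mu(A\otimes A)=3$, so it lies in none of $\overline{\mathcal{O}_2}$, $\overline{\mathcal{O}_5}$, $\overline{\mathcal{O}_9}$ -- indeed for $p_i=1$ it is the rigid associative algebra $\mathbb{K}^3$ equipped with the identity twist, whose orbit closure would have to be a further irreducible component. (The same semicontinuity argument already blocks $A^2_1$ and $A^2_2$ from the claimed components of $\mathcal{HA}ss_2$.) So the gap is not merely that the degenerations are unexecuted: at least one of them provably does not exist, and your proof cannot close without either locating an error in this computation or amending the statement being proved; as it stands, the blind proposal establishes neither the covering nor, consequently, the proposition as written.
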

\begin{center}
	\includegraphics[scale=0.4]{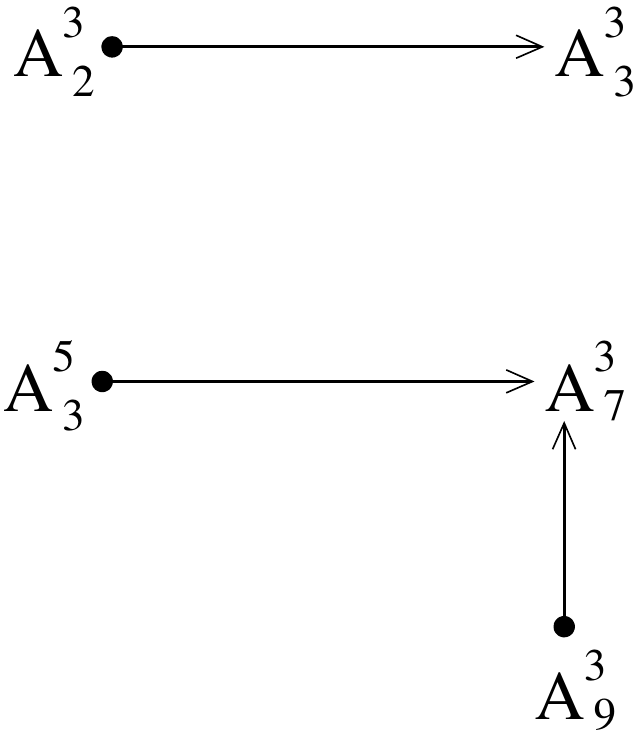} \\[1ex]
Irreducible components of $\mathcal{A}ss_3$
\end{center}

\begin{proposition}
The irreducible components of $\mathcal{UHA}ss_2$ are the Zariski closure of orbits of Hom-associative algebras 
$\Omega=\left\{A'^2_3, A'^2_4\right\}$.
\end{proposition}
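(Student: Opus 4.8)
The plan is to realise $\mathcal{UHA}ss_2$ as a finite union of orbits and to read off its irreducible components from the degeneration order between them. By the classification theorem above, every point of $\mathcal{UHA}ss_2$ is isomorphic to one of $A'^2_1,\dots,A'^2_4$, so $\mathcal{UHA}ss_2=\bigcup_i\vartheta(A'^2_i)$ under the unital transport of structure action, i.e.\ the action of $G=\{f\in GL_2(\K):f(e_1)=e_1\}$ obtained by restricting the construction of Proposition \ref{p1} to maps fixing the unit. Consequently every irreducible component is the Zariski closure of one of these four orbits, and it remains to decide which of the closures $\overline{\vartheta(A'^2_i)}$ are maximal.

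First I would compute the dimension of each orbit via $\dim\vartheta(A'^2_i)=\dim G-\dim\mathrm{Aut}(A'^2_i)$, determining $\mathrm{Aut}(A'^2_i)$ by solving the isomorphism equations \eqref{c.5} for a linear $\varphi$ subject to $\varphi(e_1)=e_1$. Next I would exhibit explicit degenerations $A'^2_i\to A'^2_j$ (meaning $A'^2_j\in\overline{\vartheta(A'^2_i)}$) by choosing one-parameter families $g_t\in G$, typically $g_t=\diag(1,t)$, forming the transported pair $\big(g_t^{-1}\circ\mu\circ(g_t\otimes g_t),\,g_t\circ\alpha\circ g_t^{-1}\big)$ and letting $t\to0$. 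These limits produce the edges of the Hasse diagram for $\mathcal{UHA}ss_2$ and exhibit the small orbits as specialisations of the large ones.

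The decisive point is to certify the \emph{non}-degenerations, so that no candidate component lies in the closure of another and so that the listed closures genuinely exhaust the variety. For this I would use invariants that are constant, or only semicontinuous, along orbit closures: since $\alpha$ transforms by conjugation $\alpha\mapsto g\circ\alpha\circ g^{-1}$, the characteristic polynomial of $\alpha$ is constant on each $\overline{\vartheta(A'^2_i)}$ and already stratifies $\mathcal{UHA}ss_2$; further obstructions come from $\dim\mathrm{Aut}(A'^2_i)$, from the dimensions of the derivation spaces studied in the preceding section, and from the rank of the multiplication tensor, all of which behave semicontinuously under degeneration. Matching these invariants against the candidate edges rules out the impossible degenerations and pins down the maximal orbits.

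Putting these together, the argument would conclude by identifying the maximal orbit closures, checking that they are pairwise incomparable (which the invariants above enforce) and that their union is all of $\mathcal{UHA}ss_2$, whence they are precisely the irreducible components $\overline{\vartheta(A'^2_3)}$ and $\overline{\vartheta(A'^2_4)}$. The main obstacle is exactly this last, covering, step together with the non-degeneration bookkeeping: one must verify that the characteristic-polynomial strata of $\alpha$ are compatible with the claimed diagram and that every two-dimensional unital structure really degenerates into one of the two listed algebras, which is where the explicit families and the semicontinuous invariants have to be pushed through simultaneously.
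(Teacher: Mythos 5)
The paper itself offers no written proof of this proposition --- it is stated bare, accompanied only by a diagram --- so your outline cannot be checked against an argument in the text; it can only be judged on whether, executed, it would establish the claim. Your framework is the right one (and is more than the paper provides): finite orbit decomposition from the classification theorem, orbit dimensions via $\dim\vartheta(A'^2_i)=\dim G-\dim\mathrm{Aut}(A'^2_i)$ with $G=\{f\in GL_2(\K): f(e_1)=e_1\}$, explicit one-parameter degenerations $g_t=\diag(1,t)$, and the semicontinuous invariants you list, of which the key one is that the characteristic polynomial of $\alpha$ is constant on each orbit closure because $\alpha$ transforms by conjugation.

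The genuine gap is that you deferred exactly the computations that decide the matter, and when one carries them out, your own invariants contradict the listed $\Omega$. Concretely: the characteristic polynomials of $\alpha$ for the four classes are $(x-1)^2$ for $A'^2_1$, $(x-1)(x+1)$ for $A'^2_2$, $x(x-1)$ for $A'^2_3$, and $(x-1)^2$ for $A'^2_4$. By your stratification argument, $\vartheta(A'^2_2)$ can lie in neither $\overline{\vartheta(A'^2_3)}$ nor $\overline{\vartheta(A'^2_4)}$, so $\overline{\vartheta(A'^2_2)}$ is maximal and must itself be an irreducible component --- the covering step you flagged as ``the main obstacle'' actually fails. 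Moreover, your own suggested family $g_t=\diag(1,t)\in G$ applied to $A'^2_1$ gives $e_2\ast e_2=t^2e_1+te_2\to 0$ with $\alpha=\id$ fixed, so $A'^2_4\in\overline{\vartheta(A'^2_1)}$; consistently, $\mathrm{Aut}(A'^2_4)\cap G=\{e_2\mapsto de_2\}$ is one-dimensional while $\mathrm{Aut}(A'^2_1)\cap G$ is finite, so $\dim\vartheta(A'^2_4)=1<2=\dim\vartheta(A'^2_1)$ and $\overline{\vartheta(A'^2_4)}$ is not maximal, while $\vartheta(A'^2_1)$ lies in neither listed closure (characteristic polynomial rules out $A'^2_3$, dimension rules out $A'^2_4$). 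Relative to the printed classification, your program therefore yields the components $\overline{\vartheta(A'^2_1)}$, $\overline{\vartheta(A'^2_2)}$, $\overline{\vartheta(A'^2_3)}$, not the two claimed. You should either have pushed the computation through, or observed that the statement cannot hold as written (its label set is already suspect: the classification list prints the label $A'^2_3$ twice). A proof plan that stops precisely at the step where the claim breaks, and then asserts the claimed conclusion, is not a proof.
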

\begin{center}
\includegraphics[scale=0.4]{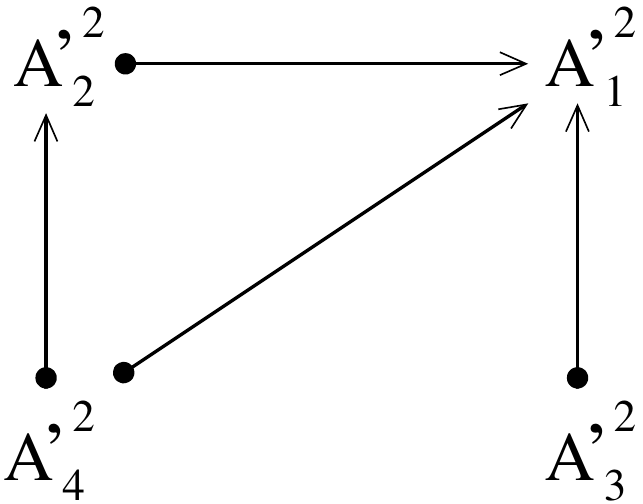} \\[1ex]
	Irreducible components of $\mathcal{UHA}ss_2$
\end{center}
%

\begin{proposition}
The irreducible components of $\mathcal{UHA}ss_3$ are the Zariski closure of orbits of Hom-associative algebras 
$\Omega=\left\{A'^3_9, A'^3_{10}, A'^3_{11},A'^3_{12},A'^3_{13},A'^3_{14},A'^3_{15}\right\}$.
\end{proposition}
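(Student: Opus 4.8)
The plan is to identify the irreducible components of $\mathcal{UHA}ss_3$ with the maximal orbit closures under the transport-of-structure action of $GL_3(\mathbb{K})$, exactly as in the lower-dimensional cases $\mathcal{HA}ss_2,\mathcal{HA}ss_3,\mathcal{UHA}ss_2$ treated above. First I would record that, set-theoretically, $\mathcal{UHA}ss_3$ is the union of the orbits of the algebras $A'^3_1,\dots,A'^3_{15}$ delivered by the classification theorem, where for the families carrying parameters one takes the union of orbits over all admissible parameter values. Since $GL_3(\mathbb{K})$ and each parameter space are irreducible, every set $\overline{\vartheta(A'^3_\ell)}$ is an irreducible closed subset and $\mathcal{UHA}ss_3$ is a finite union of them; because an irreducible closed set contained in a finite union of closed sets lies in one of them, every irreducible component must coincide with one of these closures. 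The problem therefore reduces to computing the degeneration partial order $A'^3_k\rightarrow A'^3_\ell$ (meaning $\overline{\vartheta(A'^3_\ell)}\subseteq\overline{\vartheta(A'^3_k)}$) and reading off its maximal elements.

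To establish the coverage (upper bounds), I would exhibit, for each algebra not on the list $\Omega$, an explicit one-parameter family $g_t\in GL_3(\mathbb{K}(t))$ with $\lim_{t\to 0} g_t\cdot A'^3_k=A'^3_\ell$ for some $A'^3_k\in\Omega$, so that $\Omega$ dominates the whole variety. A point worth emphasizing here is that the vanishing $H^2_{Hom}=0$ recorded in the cohomology section only forces rigidity against deformations of $\mu$ with the twist $\alpha$ held fixed, whereas the geometric deformations inside $\mathcal{UHA}ss_3$ also vary $\alpha$. This explains why $A'^3_6,A'^3_7,A'^3_8$ (and $A'^3_3$), though Hom-cohomologically rigid, are not components: the parameters $a,b$ of the families $A'^3_9,\dots,A'^3_{15}$ sit inside the structure map $\alpha$, so these rigid algebras reappear as special-parameter limits, that is, degenerations, of the listed families.

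For the reverse inequalities---the maximality and pairwise non-comparability of the seven families in $\Omega$---I would use invariants that are constant on orbits and semicontinuous along degenerations: the orbit dimension $\dim\vartheta(A)=9-\dim Stab(A)$, the dimension of the space of derivations $Der$, the dimension of $H^2_{Hom}$, and the rank together with the characteristic polynomial (hence the number of distinct eigenvalues) of $\alpha$. A degeneration can neither increase $\dim\vartheta(A)$ nor the generic rank of $\alpha$, and it cannot decrease $\dim Der$ or $\dim H^2_{Hom}$; verifying that no two of $A'^3_9,\dots,A'^3_{15}$ are simultaneously compatible with all these constraints (with the actual computations carried out in a computer algebra system, as for $\mathcal{HA}ss_2$ and $\mathcal{HA}ss_3$) shows that each $\overline{\vartheta(A'^3_\ell)}$ with $\ell\in\{9,\dots,15\}$ is maximal and that the seven closures are distinct.

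Assembling the two directions, the seven closures are irreducible, pairwise incomparable, and cover $\mathcal{UHA}ss_3$, whence they are precisely its irreducible components. The main obstacle is not the constructive direction---writing down the degenerating families is routine, if lengthy---but the non-degeneration direction: ruling out every hypothetical $g_t$ between two of the seven families requires discriminating invariants strong enough to separate them, and, because these families carry parameters living inside $\alpha$, one must check that the chosen invariants stay separating for all generic parameter values at once. This is where the bulk of the careful, computer-assisted work will be concentrated.
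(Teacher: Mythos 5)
Your strategy---write $\mathcal{UHA}ss_3$ as a finite union of irreducible closures of (parametrized) orbit families, exhibit explicit one-parameter degenerations showing the seven families in $\Omega$ dominate everything else, and rule out comparabilities among them via semicontinuous invariants ($\dim\vartheta$, $\dim Der$, $\dim H^2_{Hom}$, rank and spectrum of $\alpha$)---is precisely the degeneration/orbit-closure method the paper announces in its last section; in fact the paper states this proposition with no written proof at all, offering only the degeneration diagram, so your sketch (including the genuinely important observation that vanishing of $H^2_{Hom}$ only gives rigidity at fixed twist $\alpha$, which is why the Hom-cohomologically rigid $A'^3_6$, $A'^3_7$, $A'^3_8$ nevertheless arise as special-parameter limits of the families $A'^3_9,\dots,A'^3_{15}$ and are not components) is more explicit than the source, with the computational core deferred exactly as the paper defers it. One small adjustment: on the unital variety the transport of structure must preserve the unit $e_1$, so orbits should be taken under the subgroup of $GL_3(\mathbb{K})$ fixing $e_1$ (still irreducible, so your irreducibility and finite-union arguments go through verbatim), and the orbit-dimension formula $\dim\vartheta(A)=9-\dim Stab(A)$ must be replaced by the corresponding count for that subgroup---a harmless correction that does not affect the structure of your argument.
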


\begin{center}
	\includegraphics[scale=0.6]{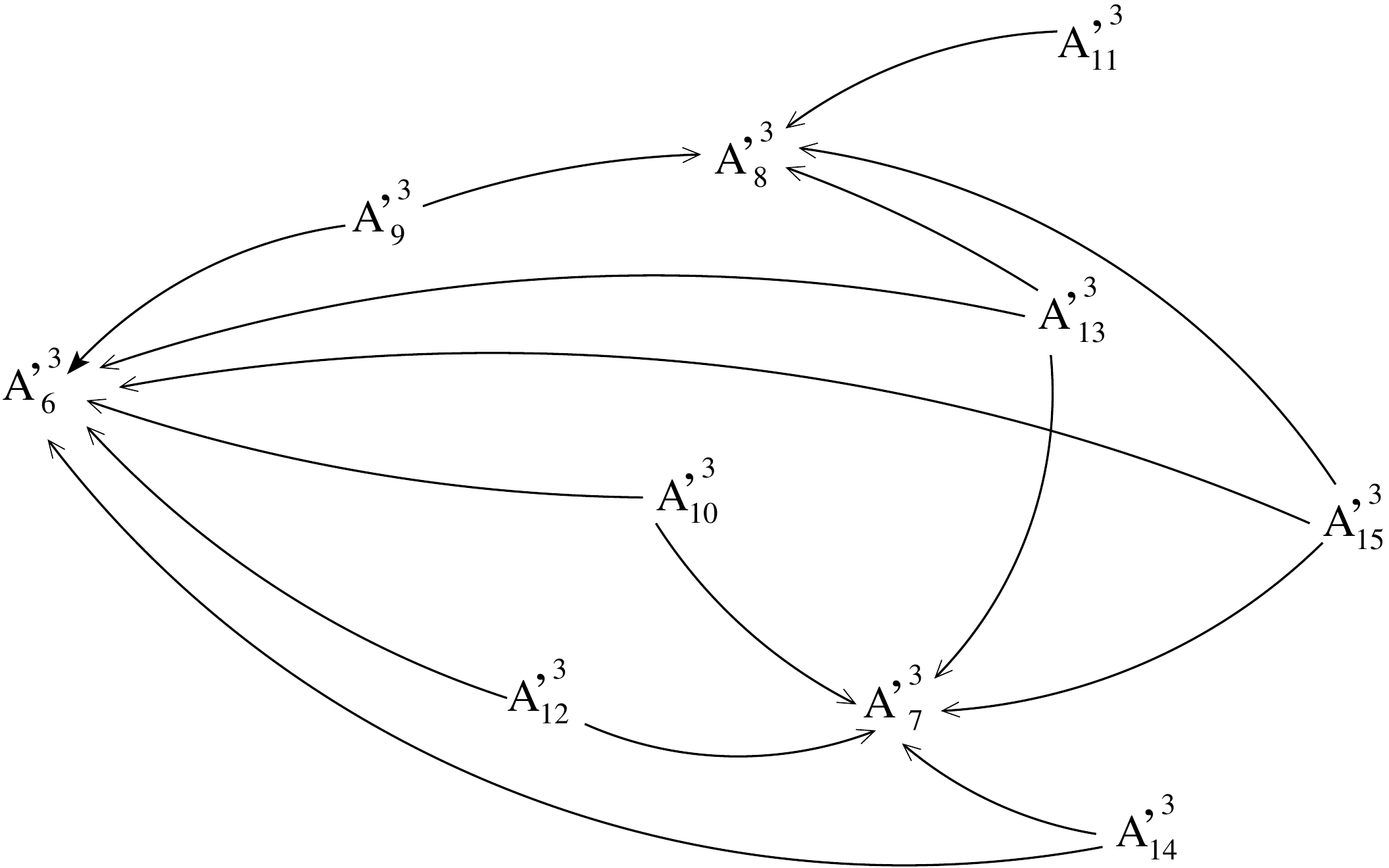} \\[1em]
  Irreducible components of $\mathcal{UHA}ss_3$
\end{center}

\begin{proposition}
The irreducible components of $\mathcal{UA}ss_3$ are the Zariski closure of orbits of Hom-associative algebras 
$\Omega=\left\{\tilde{A}^3_{10}, \tilde{A}^3_{14}\right\}$.
\end{proposition}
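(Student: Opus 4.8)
The plan is to follow the deformation-theoretic scheme set up in this section: $\mathcal{UA}ss_3$ is the affine variety in $\K^{27}$ cut out by the associativity and unitality equations, $GL_3(\K)$ acts by transport of structure, orbits correspond to isomorphism classes, and by the Definition of geometric rigidity the irreducible components are exactly the Zariski closures $\overline{\vartheta(\mu)}$ of the open orbits. By the Remark above, any component containing an algebra $A$ contains every degeneration of $A$; hence the two tasks are (i) to prove that $\tilde{A}^3_{10}$ and $\tilde{A}^3_{14}$ are geometrically rigid, so that $\overline{\vartheta(\tilde{A}^3_{10})}$ and $\overline{\vartheta(\tilde{A}^3_{14})}$ are irreducible components, and (ii) to prove that every other $3$-dimensional unital associative algebra from the classification is a degeneration of one of these two, so that their closures exhaust $\mathcal{UA}ss_3$ and no further component occurs.

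First I would settle rigidity. The tangent space to the orbit $\vartheta(\mu)$ at $\mu$ is $B^2(A,A)$ and the tangent space to $\mathcal{UA}ss_3$ is contained in $Z^2(A,A)$, so $H^2(A,A)=0$ already forces the orbit to be open. When $H^2\neq 0$, I would instead certify rigidity through obstructions: for a representative cocycle $\varphi$ of each nonzero class in $H^2$ of the candidate, form the infinitesimal deformation $\mu+t\varphi$ and compute its primary obstruction, the image of the Gerstenhaber square $[\varphi,\varphi]$ in $H^3$, using the explicit $Z^2$, $H^2$ and $H^3$ data already tabulated for $\mathcal{UA}ss_3$. Showing that this obstruction is nonzero for every nontrivial deformation direction proves that no nontrivial formal deformation integrates, whence both algebras are formally rigid and therefore their orbits are open.

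Next I would build the degeneration diagram. For each remaining algebra $B$ in the list $\tilde{A'}^3_6,\dots,\tilde{A'}^3_{15}$ I would exhibit an explicit one-parameter family $\mu_t=g_t\cdot\mu$ with $g_t\in GL_3(\K(t))$ — typically a diagonal or elementary rescaling of the radical basis vectors that fixes the unit $e_1$ — such that $\mu_t\cong\tilde{A}^3_{10}$ or $\mu_t\cong\tilde{A}^3_{14}$ for $t\neq 0$ while $\lim_{t\to 0}\mu_t=B$; this places $B$ in $\overline{\vartheta(\tilde{A}^3_{10})}$ or $\overline{\vartheta(\tilde{A}^3_{14})}$. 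Collecting these contractions shows $\overline{\vartheta(\tilde{A}^3_{10})}\cup\overline{\vartheta(\tilde{A}^3_{14})}=\mathcal{UA}ss_3$, so there are at most two components.

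The main obstacle is the last point: proving the two closures are genuinely distinct maximal sets, i.e. that neither candidate lies in the closure of the other, so that the count is exactly two. A dimension comparison of $\vartheta(\tilde{A}^3_{10})$ and $\vartheta(\tilde{A}^3_{14})$ (via $\dim\vartheta=9-\dim\mathrm{Der}$) does not by itself exclude a degeneration between them, so I would produce a discriminating invariant that is constant on orbits and semicontinuous under degeneration — for instance $\dim A^2$, the dimension of the annihilator or socle of the radical, or $\dim\mathrm{Der}(A)$ — and verify it takes incompatible values on $\tilde{A}^3_{10}$ and $\tilde{A}^3_{14}$, ruling out any contraction in either direction. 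This is precisely where the rigidity certification and the obstruction computations of the previous steps interlock, and it is the delicate part of the argument; once incomparability is established, the two rigid orbit closures are the irreducible components and the statement follows.
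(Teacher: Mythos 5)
You should know at the outset that the paper contains no proof of this proposition to compare against: it is stated bare, followed only by an unexplained degeneration-diagram figure, so its only visible support is the cohomology tables of Section 5. Your outline is the standard Gerstenhaber scheme (rigid algebras give open orbits whose closures are components; explicit degenerations give exhaustion), and in outline that is the right strategy. But as an argument it has concrete gaps. The rigidity step cannot begin with $H^2=0$: by the paper's own tables both candidates have nonzero second cohomology ($H^2$ of $\tilde{A}'^3_{10}$ is reported $9$-dimensional, of $\tilde{A}'^3_{14}$ $5$-dimensional), so the entire weight falls on the obstruction calculus that you defer and never perform; certifying that the quadratic map $\varphi\mapsto[\varphi,\varphi]$ has no nontrivial zero on a $9$-dimensional $H^2$ amounts to exhibiting a system of quadrics on $\mathbb{P}^{8}$ with empty zero locus, which is a substantial computation, not a formality. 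Conversely, your ``delicate part'' is not delicate at all: if both orbits are open, neither can lie in the closure of the other, since two disjoint dense open subsets of an irreducible closed set would have to intersect; no semicontinuous invariant is needed, and you have mislocated the difficulty.

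The genuine obstacle is that your step (ii), executed honestly, refutes the statement under its natural reading rather than proving it. Identify the algebras: $\tilde{A}'^3_{10}$ is the unital algebra whose radical has zero multiplication, i.e. $\K[x,y]/(x,y)^2$, while $\tilde{A}'^3_{14}$ is $\K[x]/(x^3)$ with $x=e_3$, $x^2=e_2$. The very rescalings you propose --- $g_t$ fixing the unit $e_1$ and $e_2$, with $g_t(e_3)=te_3$ --- transport $e_3\cdot e_3=e_2$ to $t^2e_2\rightarrow 0$, so $\tilde{A}'^3_{14}$ degenerates onto $\tilde{A}'^3_{10}$ and the two proposed closures are nested, $\overline{\vartheta(\tilde{A}'^3_{10})}\subseteq\overline{\vartheta(\tilde{A}'^3_{14})}$; they cannot both be irreducible components. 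Worse, $\tilde{A}'^3_{6}\cong\K^3$ (its subalgebra on $e_2,e_3$ is $\K[x]/(x^2-x-1)\cong\K\times\K$), which is separable, hence rigid with vanishing Hochschild $H^2$, and a rigid algebra is never a proper degeneration of anything; thus $\overline{\vartheta(\K^3)}$ is a component of the unital associative variety not listed in $\Omega$. So either $\mathcal{UA}ss_3$ and the unprimed labels $\tilde{A}^3_{10},\tilde{A}^3_{14}$ in the proposition mean something other than the face-value reading (note the paper's figure caption says $\mathcal{A}ss_3$, and the paper's tables elsewhere contain inconsistencies, e.g. a nonzero $H^2$ attributed to $\tilde{A}'^3_{6}\cong\K^3$), or the statement is wrong as printed. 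Your proposal takes the statement at face value and contains no mechanism to detect this; a correct treatment must first pin down precisely which variety and which algebras are meant, and then either repair or refute the claim --- neither of which your plan, nor the paper itself, actually does.
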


\begin{center}
		\includegraphics[scale=0.6]{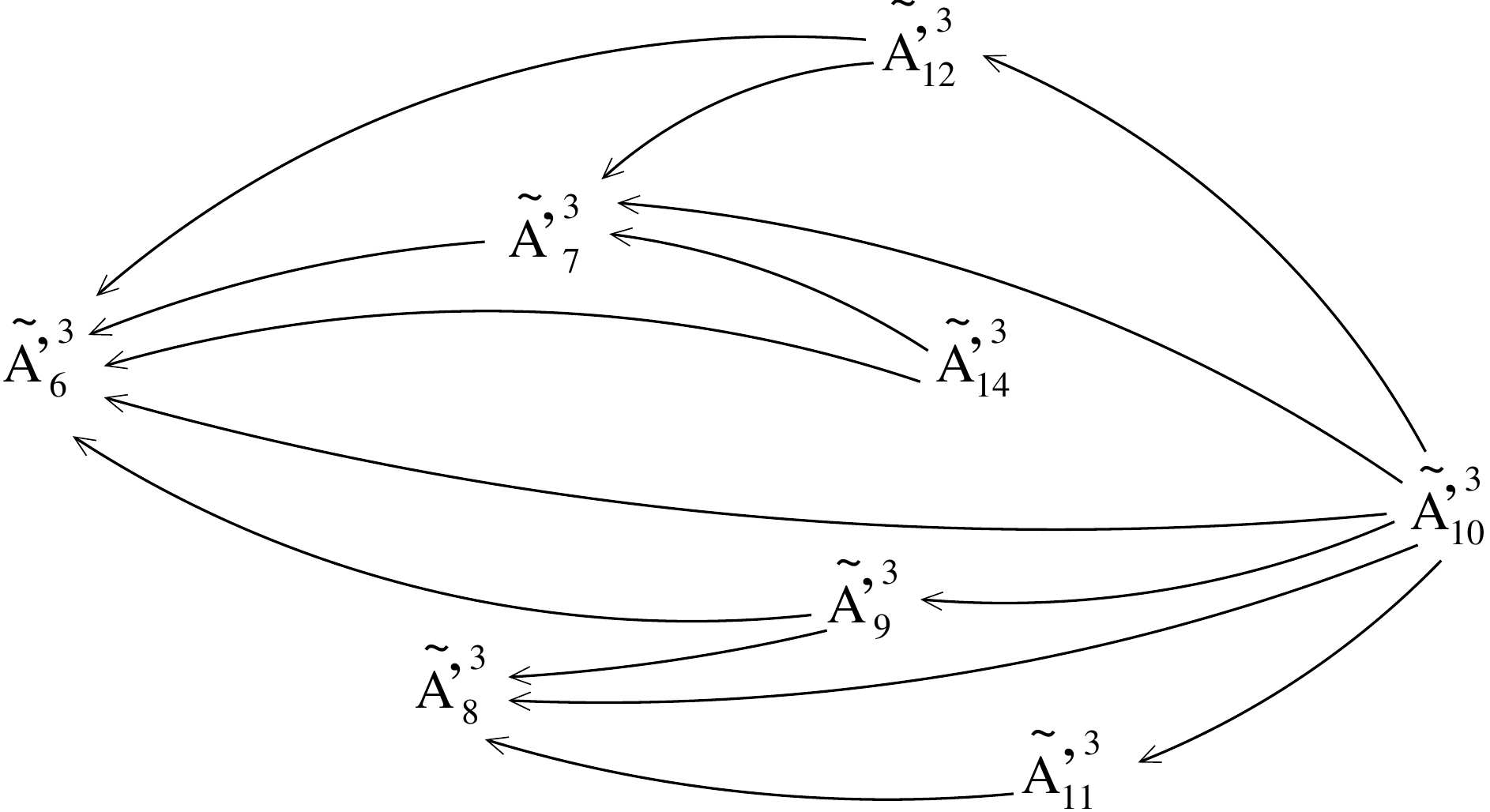} \\[1em]
 Irreducible components of $\mathcal{A}ss_3$
\end{center}

\end{document}